\DeclareMathOperator{\arccot}{arccot}
\DeclareMathOperator{\arccoth}{arccoth}
\DeclareMathOperator{\arctanh}{arctanh}
\newtheorem{thm}{Theorem}[section]
\newtheorem*{thm*}{Theorem}
\newtheorem{cor}[thm]{Corollary}
\newtheorem{lem}[thm]{Lemma}
\newtheorem{prop}[thm]{Proposition}
\newtheorem*{prop*}{Proposition}
\newtheorem*{conj*}{Conjecture}
\newtheorem{defn}[thm]{Definition}
\newtheorem*{defn*}{Definition}
\theoremstyle{definition}
\newtheorem{rem}[thm]{\textbf{Remark}}
\newtheorem*{rmk*}{Remark}
\newtheorem*{fact*}{Fact}
\theoremstyle{proof}
\newcommand{\CD}{\text{CD}}
\newcommand{\CDD}{\text{CDD}}
\newcommand{\II}{\text{II}}
\newcommand{\sign}{\text{sign}}
\newcommand{\med}{\text{med}}
\newcommand{\LogHess}{\text{LogHess}}
\newcommand{\Ric}{\text{Ric}}
\newcommand{\norm}[1]{\left\Vert#1\right\Vert}
\newcommand{\snorm}[1]{\Vert#1\Vert}
\newcommand{\abs}[1]{\left\vert#1\right\vert}
\newcommand{\set}[1]{\left\{#1\right\}}
\newcommand{\brac}[1]{\left(#1\right)}
\newcommand{\scalar}[1]{\left \langle #1 \right \rangle}
\newcommand{\Real}{\mathbb{R}}
\newcommand{\eps}{\varepsilon}
\newcommand{\K}{\mathcal{K}}
\newcommand{\I}{\mathcal{I}}
\newcommand{\N}{\mathcal{N}}
\newcommand{\GL}{\mathcal{GL}^\flat}
\newcommand{\J}{\mathcal{I}^\flat}
\newcommand{\vol}{\textrm{Vol}}
\newlength{\defbaselineskip}
\newcommand{\setlinespacing}[1]           {\setlength{\baselineskip}{#1 \defbaselineskip}}
\numberwithin{equation}{section}
\begin{document}

\title{Beyond traditional Curvature-Dimension I: new model spaces for isoperimetric and concentration inequalities in negative dimension}
\author{Emanuel Milman\textsuperscript{1}}

\date{}

\footnotetext[1]{Department of Mathematics,
Technion - Israel Institute of Technology, Haifa 32000, Israel. Supported by ISF (grant no. 900/10), BSF (grant no. 2010288) and Marie-Curie Actions (grant no. PCIG10-GA-2011-304066).
Email: emilman@tx.technion.ac.il.\\
Mathematics Subject Classification (2010): 32F32, 53C21, 39B62, 58J50.} 
\maketitle

\begin{abstract}
We study the isoperimetric, functional and concentration properties of $n$-dimensional weighted Riemannian manifolds satisfying the Curvature-Dimension condition, when the generalized dimension $N$ is negative, and more generally, is in the range $N \in (-\infty,1)$, extending the scope from the traditional range $N \in [n,\infty]$. In particular, we identify the correct one-dimensional model-spaces under an additional diameter upper bound, and discover a new case yielding a \emph{single} model space (besides the previously known $N$-sphere and Gaussian measure when $N \in [n,\infty]$): a (positively curved) sphere of (possibly negative) dimension $N \in (-\infty,1)$. 
When curvature is non-negative, we show that arbitrarily weak concentration implies an $N$-dimensional Cheeger isoperimetric inequality, and derive various weak Sobolev and Nash-type inequalities on such spaces. When curvature is strictly positive, we observe that such spaces satisfy a Poincar\'e inequality uniformly for all $N \in (-\infty,1-\eps]$, and enjoy a two-level concentration of the type $\exp(-\min(t,t^2))$. Our main technical tool is a generalized version of the Heintze--Karcher theorem, which we extend to the range $N \in (-\infty,1)$.
\end{abstract}

\section{Introduction}

Let $(M^n,g)$ denote an $n$-dimensional ($n \geq 2$) complete connected oriented smooth Riemannian manifold with (possibly empty) boundary, and let $\mu$ denote a measure on $M$ having density $\Psi$ with respect to the Riemannian volume form $vol_g$. We assume that $M$ is geodesically convex (any two points may be connected by a distance minimizing geodesic), that $\partial M$ is $C^2$ smooth, and that $\Psi$ is positive and $C^2$ smooth on the entire $M$ (all the way up to the boundary). As usual, we denote by $Ric_g$ the Ricci curvature tensor and by $\nabla_g$ the Levi-Civita covariant derivative.

\begin{defn*}[Generalized Ricci Tensor]
Given $N \in (-\infty,\infty]$, the $N$-dimensional generalized Ricci curvature tensor $Rig_{g,\mu,N}$ is defined as:
\begin{equation} \label{eq:Ric-Tensor}
Ric_{g,\mu,N}  :=  Ric_g - \LogHess_{N-n} \Psi ,
\end{equation}
where:
\[
\LogHess_\N \Psi := \nabla^2_g \log(\Psi) + \frac{1}{\N} \nabla_g \log(\Psi) \otimes \nabla_g \log(\Psi) =  \N \frac{\nabla^2_g \Psi^{\frac{1}{\N}}}{\Psi^{\frac{1}{\N}}} .
\]
To make sense of the latter tensor when $\N \in \set{0,\infty}$, we employ throughout the convention that $\frac{1}{\infty} = 0$, $\frac{1}{0} = +\infty$ and $\infty \cdot 0 = 0$. 
\end{defn*}

\begin{defn*}[Curvature-Dimension and Curvature-Dimension-Diameter conditions]
$(M^n,g,\mu)$ satisfies the Curvature-Dimension condition $\CD(\rho,N)$ ($\rho \in \Real$ and $N \in (-\infty,\infty]$) if $\Ric_{g,\mu,N} \geq \rho g$ as symmetric $2$-tensors on $M$. It satisfies the Curvature-Dimension-Diameter condition $\CDD(\rho,N,D)$ if in addition its diameter is bounded above by $D \in (0,\infty]$. 
\end{defn*}

Note that $\CD(\rho,N)$ is satisfied with $N=n$ if and only if $\Psi$ is constant and $Ric_{g,\mu,n} = Ric_g \geq \rho g$ (the classical constant density case). The generalized Ricci tensor (\ref{eq:Ric-Tensor}) was introduced with $N=\infty$ by Lichnerowicz \cite{Lichnerowicz1970GenRicciTensorCRAS,Lichnerowicz1970GenRicciTensor} and in general by Bakry \cite{BakryStFlour} (cf. Lott \cite{LottRicciTensorProperties}). The Curvature-Dimension condition was introduced by Bakry and \'Emery in equivalent form in \cite{BakryEmery} (in the more abstract framework of diffusion generators) - see Subsection \ref{subsec:BE} for a discussion. Its name stems from the fact that the generalized Ricci tensor incorporates information on curvature and dimension from both the geometry of $(M,g)$ and the measure $\mu$, and so $\rho$ may be thought of as a generalized-curvature lower bound, and $N$ as a generalized-dimension upper bound. The $\CD(\rho,N)$ condition has been an object of extensive study over the last two decades (see e.g. also \cite{QianWeightedVolumeThms,LedouxLectureNotesOnDiffusion,CMSInventiones, CMSManifoldWithDensity, VonRenesseSturmRicciChar,BakryQianGenRicComparisonThms,WeiWylie-GenRicciTensor,MorganBook4Ed,EMilmanSharpIsopInqsForCDD,KolesnikovEMilmanReillyPart1} and the references therein), especially since Perelman's work on the Poincar\'e Conjecture \cite{PerelmanEntropyFormulaForRicciFlow}, and the extension of the Curvature-Dimension condition to the metric-measure space setting by Lott--Sturm--Villani \cite{SturmCD12,LottVillaniGeneralizedRicci}.

\medskip

So far, most of the activity has been in the range $N \in [n,\infty]$. Very recently, a growing interest in weighted manifolds with negative generalized dimension $N < 0$ has begun to emerge - see the works by Ohta--Takatsu \cite{OhtaTakatsu-GenEntropyI,OhtaTakatsu-GenEntropyII}, Ohta \cite{Ohta-NegativeN}, Kolesnikov--Milman \cite{KolesnikovEMilmanReillyPart1} and Klartag \cite{KlartagLocalizationOnManifolds}. As we shall see, the case of negative generalized dimension is actually quite natural. In the Euclidean setting, measures $\mu$ on $(\Real^n,\abs{\cdot})$ satisfying the $\CD(0,N)$ condition with $\frac{1}{N} \in [-\infty,1/n]$ have already been studied by Borell \cite{BorellConvexMeasures} (cf. \cite{BrascampLiebPLandLambda1}), who dubbed them convex (or $\frac{1}{N}$-concave) measures - see \cite{BobkovConvexHeavyTailedMeasures, BobkovLedouxWeightedPoincareForHeavyTails, NguyenDimensionalBrascampLieb, EMilmanRotemHomogeneous,KolesnikovEMilmanReillyPart1} for a more detailed account of these measures and their useful properties. 

In this work, we focus on the case that $N < 0$ (in fact, more generally, $N \in (-\infty,1)$), and study the isoperimetric, functional and concentration properties of weighted manifolds satisfying the $\CD(\rho,N)$ condition when $N$ is in that range, for arbitrary curvature ($\rho \in \Real$) and upper bound on diameter, for non-negative curvature ($\rho=0$) and for positive curvature ($\rho > 0$). 

Finally, we mention the recent works by Wylie \cite{Wylie-SectionalCurvature,Wylie-CheegerGromoll} and Kennard--Wylie \cite{KennardWylie-WeightedSectionalCurvature} on generalizing the notion of \emph{sectional curvature} to the weighted manifold setting and its applications. Remarkably, whereas all of our results break down in the remaining range $N \in [1,n)$ (see Subsection \ref{subsec:no-extend}), these authors obtain various topological and geometric results even for the case $N=1$, foreshadowing additional future developments in this direction (see also \cite{EMilman-GradedCD}). 

\subsection{Jacobian Curvature-Dimension condition}

Given a $C^2$ smooth co-oriented hypersurface $S \subset (M^n,g)$ with normal unit vector field $\nu$, let $F_S : S \times \Real \rightarrow M$ denote the normal map given by $F_S(x,t) = \exp_x(t \nu(x))$ (strictly speaking, since $M$ may have a boundary, this is only defined for all $(x,t)$ so that $t \nu(x)$ is in the domain $\text{Dom}$ of $\exp_x$). Let $J_{S,x}(t)$ denote the Jacobian of this map, so that the pull-back of $vol_g$ by $F_S$ is given by $F_S^*(vol_g) = J_{S,x}(t) dvol_S(x) dt$, where $vol_S$ is the induced Riemannian volume on $S$.  
The \emph{weighted} Jacobian $J_{S,\mu,x}(t)$, which also takes into account the corresponding densities, is defined as:
\[
J_{S,\mu,x}(t) = J_{S,x}(t) \frac{\Psi(F_S(x,t))}{\Psi(x)} ,
\]
so that $F_S^*(\mu) = J_{S,\mu,x}(t) dvol_{S,\mu}(x) dt$, where $vol_{S,\mu} = \Psi \cdot vol_S$. 

\begin{defn*}[Jacobian Curvature-Dimension condition]
$(M^n,g,\mu)$ satisfies a Jacobian Curvature-Dimension condition $\text{Jac-CD}(\rho,N)$ ($\rho \in \Real$, $N \in (-\infty,\infty]$) if for any  $C^2$ smooth co-oriented hypersurface $S$ and any $x \in S$, the weighted Jacobian $J(t) = J_{S,\mu,x}(t)$ satisfies the following ordinary differential inequality on the maximal interval $L$ containing the origin on which it is defined and positive:
\begin{equation} \label{eq:JacEq}
-\LogHess_{N-1} J \geq \rho .
\end{equation}
In other words, $(L,\abs{\cdot},J(t) dt)$ satisfies $\CD(\rho,N)$ (see Subsection \ref{subsec:1D} for the extension of the Curvature-Dimension condition to the one-dimensional setting). 

\noindent
For concreteness, we record that our notation in the one-dimensional case becomes:
\[
\LogHess_{\N} J = (\log J)'' + \frac{1}{\N} ((\log J)')^2 = \N \frac{\brac{J^{1/\N}}''}{J^{1/\N}} ,
\]
with the usual interpretation when $\N \in \set{0,\infty}$. 
\end{defn*}

The generalized Heintze-Karcher theorem states that the $\CD(\rho,N)$ condition implies the $\text{Jac-CD}(\rho,N)$ condition, when $N \in [n,\infty]$. The classical case $N=n$ (constant density) is due to Heintze and Karcher \cite{HeintzeKarcher}; this was extended to the weighted manifold setting by Bayle \cite[Appendix E]{BayleThesis} ($N \in (n,\infty)$) and Morgan \cite{MorganManifoldsWithDensity} ($N=\infty$). 
In Section \ref{sec:HK}, we provide our own version of the proof and extend it to the entire range $N \in (-\infty,1) \cup [n,\infty]$. 

\begin{thm}[Generalized Heintze-Karcher Theorem, extending \cite{HeintzeKarcher,BayleThesis,MorganManifoldsWithDensity}] 
The $\CD(\rho,N)$ condition implies the $\text{Jac-CD}(\rho,N)$ for all $\rho \in \Real$ and $N \in (-\infty,1) \cup [n,\infty]$.
\end{thm}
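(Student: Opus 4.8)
The plan is to reduce the statement to a one-dimensional ODE comparison along each normal geodesic emanating from the hypersurface $S$, and to track the Riccati-type evolution of the (weighted) logarithmic derivative of the Jacobian. Fix a $C^2$ hypersurface $S$ with unit normal $\nu$, a point $x \in S$, and let $J(t) = J_{S,\mu,x}(t)$ on the maximal interval $L \ni 0$ where it is defined and positive. The classical first variation formula gives $(\log J_{S,x})'(t) = H(t)$, where $H(t)$ is the (unweighted) mean curvature of the equidistant hypersurface $F_S(\cdot,t)$ at $F_S(x,t)$, and the Riccati equation along the geodesic $\gamma(t) = F_S(x,t)$ yields $H'(t) = -|\II(t)|^2 - \Ric_g(\gamma',\gamma') \le -\frac{1}{n-1}H(t)^2 - \Ric_g(\gamma',\gamma')$ by Cauchy--Schwarz on the second fundamental form $\II$. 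For the weighted Jacobian, writing $\psi = \log \Psi$ and $h(t) := (\log J)'(t) = H(t) + \frac{d}{dt}\psi(\gamma(t)) = H(t) + \scalar{\nabla_g \psi, \gamma'}$, one differentiates once more to obtain an expression for $h'(t)$ involving $-\frac{1}{n-1}H(t)^2$, $-\Ric_g(\gamma',\gamma')$, and $\nabla^2_g \psi(\gamma',\gamma')$.

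The key algebraic step is to organize $h'(t)$ so that the $\CD(\rho,N)$ hypothesis $\Ric_g - \nabla^2_g\psi - \frac{1}{N-n}\nabla_g\psi\otimes\nabla_g\psi \ge \rho g$ can be fed in. Evaluating the hypothesis on $\gamma'$ gives $\Ric_g(\gamma',\gamma') - \nabla^2_g\psi(\gamma',\gamma') - \frac{1}{N-n}\scalar{\nabla_g\psi,\gamma'}^2 \ge \rho$. Substituting, and setting $a := H(t)$, $b := \scalar{\nabla_g\psi,\gamma'}$ so that $h = a+b$, one finds
\begin{equation} \label{eq:HK-key}
h'(t) \le -\rho - \frac{1}{n-1}a^2 + \frac{1}{N-n}b^2 .
\end{equation}
The target inequality $-\LogHess_{N-1} J \ge \rho$ unwinds to $h'(t) + \frac{1}{N-1}h(t)^2 \le -\rho$, i.e. one must show $\frac{1}{n-1}a^2 - \frac{1}{N-n}b^2 \ge \frac{1}{N-1}(a+b)^2$. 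This is a purely two-variable quadratic-form inequality in $(a,b)$, and the point where the dimensional restriction $N \in (-\infty,1)\cup[n,\infty]$ enters: the quadratic form $Q(a,b) = \frac{1}{n-1}a^2 - \frac{1}{N-n}b^2 - \frac{1}{N-1}(a+b)^2$ is positive semi-definite precisely in that range (and fails for $N \in [1,n)$, matching the breakdown advertised in the paper). I would verify this by completing the square: collecting terms, $Q(a,b)$ equals a nonnegative multiple of $(\alpha a - \beta b)^2$ for suitable $\alpha,\beta$ depending on $n$ and $N$, with the multiplier's sign governed by $(N-1)$, $(N-n)$, and the product $(n-1)(N-n)(N-1)$; a short discriminant computation confirms the claimed sign pattern. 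The degenerate cases $N=n$ (then $b\equiv 0$ since $\Psi$ is constant, trivial), $N=\infty$ (the $b^2/(N-n)$ term drops and $Q$ reduces to the classical Bakry--Émery bound), and $N=1$ excluded by hypothesis should be handled with the stated $\frac{1}{\infty}=0$ conventions.

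With \eqref{eq:HK-key} and the quadratic inequality in hand, the differential inequality $-\LogHess_{N-1} J \ge \rho$ holds pointwise on $L$, which by the definition of $\text{Jac-CD}(\rho,N)$ (and the one-dimensional $\CD$ convention of Subsection \ref{subsec:1D}) is exactly the conclusion; since $S$ and $x$ were arbitrary this finishes the proof. The main obstacle I anticipate is not the quadratic inequality itself but the careful bookkeeping at the two boundary-type subtleties: first, justifying the Riccati differentiation and the first-variation identity $(\log J_{S,x})' = H$ up to the boundary $\partial M$ and at points where focal behavior is approached (handled by working on the open maximal interval $L$ where $J>0$, so no singularity is crossed); and second, making sure the signs and the $1/\infty$, $1/0$ conventions are consistently applied across the cases $N<0$, $N=0$, $N\in(0,1)$, and $N=\infty$, since the sign of $N-n$ and of $N-1$ flip relative to the classical regime and several inequalities reverse accordingly. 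A clean way to unify the casework is to substitute $m := N-1 < 0$ (resp. $\ge n-1$) throughout and phrase everything in terms of $m$ and $n-1$, after which the quadratic form factors uniformly.
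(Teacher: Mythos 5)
Your plan is essentially the paper's: propagate the $\CD(\rho,N)$ hypothesis through a one-dimensional Riccati analysis of $h=(\log J_{S,\mu,x})'$ along each normal geodesic, and reduce the desired $\text{Jac-CD}$ inequality to a two-variable quadratic inequality in $a=(\log J_{S,x})'$ and $b=(\log J_{W,x})'=\frac{d}{dt}\psi(\gamma(t))$, which is ultimately a Cauchy--Schwarz statement. However, there is a sign slip that propagates through your argument: substituting the $\CD$ hypothesis $\Ric_g(\gamma',\gamma')-\nabla^2_g\psi(\gamma',\gamma')-\frac{1}{N-n}b^2\ge\rho$ into the Riccati bound $a'\le -\frac{1}{n-1}a^2-\Ric_g(\gamma',\gamma')$ and the geodesic identity $b'=\nabla^2_g\psi(\gamma',\gamma')$ yields $h'\le -\rho-\frac{1}{n-1}a^2-\frac{1}{N-n}b^2$ (minus, not plus, in front of $\frac{1}{N-n}b^2$), so the quadratic form one must show nonnegative is $\frac{1}{n-1}a^2+\frac{1}{N-n}b^2-\frac{1}{N-1}(a+b)^2$, not the $Q(a,b)=\frac{1}{n-1}a^2-\frac{1}{N-n}b^2-\frac{1}{N-1}(a+b)^2$ that you wrote.

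The distinction is not cosmetic: the deferred ``short discriminant computation'' that you promised would ``confirm the claimed sign pattern'' in fact refutes it for your $Q$, since $Q(0,b)=-\bigl(\frac{1}{N-n}+\frac{1}{N-1}\bigr)b^2<0$ already in the classical range $N>n$, so your form is not positive semi-definite even there. With the corrected plus sign, the target inequality is exactly $\frac{1}{\alpha}A^2+\frac{1}{\beta}B^2\ge\frac{1}{\alpha+\beta}(A+B)^2$ with $\alpha=n-1>0$ and $\beta=N-n$, which the paper records in (\ref{eq:HK-CS}) as valid precisely when $(\alpha,\beta)$ lies in $\set{\alpha,\beta>0}$ or $\set{\alpha+\beta<0,\ \alpha\beta<0}$, i.e.\ $N\in(-\infty,1)\cup(n,\infty)$, with $N=n$ and $N=\infty$ treated separately. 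In that range the corrected form has nonnegative diagonal coefficients $\frac{1}{n-1}-\frac{1}{N-1}$ and $\frac{1}{N-n}-\frac{1}{N-1}$ and vanishing determinant, so it is a nonnegative multiple of a single perfect square, as you anticipated. Once the sign is fixed, your argument coincides with the paper's proof of the first assertion of Theorem \ref{thm:HK}; the paper further integrates the pointwise $\text{Jac-CD}$ inequality via Sturm comparison and surjectivity of the normal map up to the first focal point to bound $\mu(S_r^+)$, but that second conclusion is not part of the statement you were asked to prove.
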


\noindent Our entire analysis in this work is based on this extension. 

\subsection{Extremal Jacobian Solutions}

Given $\delta \in \Real$, set as usual:
\[
\begin{array}{ccc}
 s_\delta(t) := \begin{cases}
\sin(\sqrt{\delta} t)/\sqrt{\delta} & \delta > 0 \\
t & \delta = 0 \\
\sinh(\sqrt{-\delta} t)/\sqrt{-\delta} & \delta < 0
\end{cases}

& , &

 c_\delta(t) := \begin{cases}
\cos(\sqrt{\delta} t) & \delta > 0 \\
1 & \delta = 0 \\
\cosh(\sqrt{-\delta} t) & \delta < 0
\end{cases}
\end{array} ~.
\]

Given a continuous function $f : \Real \rightarrow \Real$ with $f(0) \geq 0$, we denote by $f_+ : \Real \rightarrow \Real_+$ the function coinciding with $f$ between its first non-positive and first positive roots, and vanishing everywhere else, i.e. $f_+ := f 1_{[\xi_{-},\xi_{+}]}$ with $\xi_{-} = \sup\set{\xi \leq 0; f(\xi) = 0}$ and $\xi_{+} = \inf\set{\xi > 0; f(\xi) = 0}$.

\begin{defn*} Given $H, \rho \in \Real$ and $N \in (-\infty,\infty]$, set  $\delta := \rho / (N-1)$ if $N \neq 1$ and define the following (Jacobian) function $\Real \ni t \mapsto J_{H,\rho,N}(t) \in [0,\infty]$:
\[
J_{H,\rho,N}(t) :=
\begin{cases}
\brac{\brac{c_\delta(t) + \frac{H}{N-1} s_\delta(t)}_+}^{N-1} & N \notin \set{1,\infty} \\
\exp(H t - \frac{\rho}{2} t^2) & N = \infty \\
 1 & N = 1 , \rho = 0  \\
 \infty & \text{otherwise}
\end{cases} ~.
\] 
\end{defn*}

\begin{rem} \label{rem:J-eq} Observe that when $N \neq 1$,  
$J_{H,\rho,N}$ coincides (with the usual interpretation when $N =\infty$) with the solution $J$ to the following second order ODE 
on the maximal interval containing the origin where such a solution exists: \[
-\LogHess_{N-1} J = \rho ~,~ J(0) = 1 ~,~ J'(0) = H ~.
\]
Also observe that since $c_\delta(t) = 1 - \frac{\delta}{2} t^2 + o(\delta)$ and $s_\delta(t) = t + o(\delta)$ as $\delta \rightarrow 0$, it follows that $\lim_{N \rightarrow \infty} J_{H,\rho,N} =  J_{H,\rho,\infty}$.
These Jacobian functions naturally appear in comparison theorems on weighted Riemannian manifolds satisfying a Curvature-Dimension condition, such as the generalized Heintze-Karcher theorem. The parameters $H$, $\rho$, $N$ and $\delta$ may therefore be interpreted as (generalized) mean-curvature, Ricci curvature lower bound, dimension upper bound, and average sectional curvature lower bound, respectively.  
Our convention when $N=1$ is purely for consistency in pathological cases, and we could have also defined $J_{H,\rho,1} = \lim_{N \rightarrow 1+} J_{H,\rho,N}$ (since $N$ is an \emph{upper} bound on the generalized dimension). Note that our convention for $J_{H,\rho,N}$ is slightly different than the one employed in \cite{EMilmanSharpIsopInqsForCDD}.
\end{rem}

\subsection{Curvature-Dimension-Diameter Isoperimetric Inequalities}

Let $(\Omega,d)$ denote a separable metric space, and let $\mu$ denote a Borel measure on $(\Omega,d)$. The Minkowski (exterior) boundary measure $\mu^+(A)$ of a Borel set $A \subset \Omega$ is defined as $\mu^+(A) := \liminf_{\eps \to 0} \frac{\mu(A^d_{\eps} \setminus A)}{\eps}$, where $A_{\eps}=A^d_{\eps} := \set{x \in \Omega ; \exists y
\in A \;\; d(x,y) < \eps}$ denotes the $\eps$ extension of $A$ with
respect to the metric $d$. We assume henceforth that $\mu$ is a probability measure. The isoperimetric profile $\I =
\I(\Omega,d,\mu)$ is defined as the pointwise maximal function $\I
: [0,1] \rightarrow \Real_+ \cup \set{+\infty}$, so that $\mu^+(A) \geq \I(\mu(A))$, for
all Borel sets $A \subset \Omega$. An isoperimetric inequality measures the relation between the boundary measure and the measure of a set, by providing a lower bound on $\I(\Omega,d,\mu)$ by some (non-trivial) function $I: [0,1] \rightarrow \Real_+$. In our manifold-with-density setting, we will always assume that the metric $d$ is given by the induced geodesic distance on $(M,g)$, and write $\I = \I(M,g,\mu)$.

When $(\Omega,d) = (\Real,|\cdot|)$, we also define $\J  = \J(\Real,\abs{\cdot},\mu)$ as the pointwise maximal function $\J : [0,1] \rightarrow \Real_+ \cup \set{+\infty}$, so that $\mu^+(A) \geq \J(\mu(A))$ for all half lines $A = (-\infty,a)$ and $A = (a,\infty)$ (the difference with the function $\I$ being that the latter is tested on arbitrary Borel sets $A$). Obviously $\J \geq \I$, and a result of S. Bobkov \cite[Proposition 2.1]{BobkovExtremalHalfSpaces} asserts that $\J = \I$ when $\mu = f(x) dx$ and $f$ is \emph{log-concave}, meaning that $- \log(f) : \Real \rightarrow \Real \cup \set{+\infty}$ is convex.

Finally, given a Borel measure $\eta$ on $\Real$ and an interval $L \subset \Real$, we denote $\eta_L = \eta|_L / \eta(L)$ if $\eta(L) \in (0,\infty)$, and set $\I(\eta,L) := \I(L,\abs{\cdot},\eta_{L})$ and $\J(\eta,L) := \J(L,\abs{\cdot},\eta_{L})$. When $\eta(L) = 0$ we set $\J(\eta,L) = \I(\eta,L) \equiv +\infty$, and when $\eta(L) = \infty$ we set $\J(\eta,L) = \I(\eta,L) \equiv 0$. When $\eta_f = f(x) dx$, we denote for short $\I(f,L) = \I(\eta_f,L)$ and $\J(f,L) = \J(\eta_f,L)$.

 \begin{defn*}[Curvature-Dimension-Diameter Isoperimetric Profiles] Given $\rho \in \Real$, $N \in (-\infty,\infty]$ and $D \in (0,\infty]$, we define the following Curvature-Dimension-Diameter (CDD) Isoperimetric Profiles:
\begin{enumerate}
\item The CDD Gromov--L\'evy Isoperimetric Profile $\GL_{\rho,N,D}$ is defined for all $v \in [0,1]$ as:
\begin{equation} \label{eq:GL-CD-Profile}
\GL_{\rho, N, D}(v) := \inf_{(a,b) \in \Delta_D, H \in \Real} \max \brac{\frac{v}{\int_{-a}^0 J_{H,\rho,N}(t) dt},\frac{1-v}{\int_0^b J_{H,\rho,N}(t) dt}} .
\end{equation}
For consistency, when $N = 1$ and $\rho = 0$, we modify its values at the end points to be $\GL_{0, 1, D}(0) = \GL_{0, 1, D}(1) = 0$.  
\item The CDD Flat Isoperimetric Profile $\J_{\rho, N, D}$ is defined for all $v \in [0,1]$ as:
\begin{equation} \label{eq:Flat-CD-Profile}
\J_{\rho, N, D}(v) := \inf_{(a,b) \in \Delta_D} \inf_{H \in \Real} \set{ \J\brac{ J_{H,\rho,N}, [-a,b] }(v) ;  \int_{-a}^b J_{H,\rho,N}(t) dt < \infty}.
\end{equation}
\end{enumerate}
Here:
\[
\Delta_D := \begin{cases} \set{ (a,b) \; ; \; a,b> 0 ~,~ a + b = D } & D < \infty \\
 \set{ (\infty,\infty) } & D = \infty \end{cases} ~,
\]
our convention is that $[-\infty,\infty] = \Real$, and the inner infimum in (\ref{eq:Flat-CD-Profile}) over an empty set of $H$'s is defined as $0$. When $D = \infty$, we simply write $\GL_{\rho, N} = \GL_{\rho, N, \infty}$ and $\J_{\rho,N} = \J_{\rho,N,\infty}$. 
\end{defn*}

It was shown in our previous work \cite{EMilmanSharpIsopInqsForCDD} that the $\text{Jac-CD}(\rho,N)$ condition together with the assumption that the diameter is bounded from above by $D \in (0,\infty]$,  imply the following isoperimetric inequality:
\begin{equation} \label{eq:CDD-II} \I(M^n,g,\mu)(v) \geq \GL_{\rho, N, D}(v) = \J_{\rho,N,D}(v) \;\;\; \forall v \in [0,1] ,
\end{equation}
for all $N \in [n,\infty]$ and $\rho \in \Real$; in particular, the Gromov--L\'evy and Flat Isoperimetric profiles coincide in that range. Employing the generalized Heintze--Karcher theorem, the same conclusion was deduced under the $\text{CDD}(\rho,N,D)$ condition in the above range of parameters. All of our definitions naturally extend to the one-dimensional case $n=1$ (see Subsection \ref{subsec:1D}), wherein (\ref{eq:CDD-II}) is also immediate to verify under the $\text{CDD}(\rho,N,D)$ condition (see \cite[Corollary 3.2]{EMilmanSharpIsopInqsForCDD} for the case $N \in (1,\infty]$; the case $N=1$ is trivial). 
Furthermore, it was shown in \cite{EMilmanSharpIsopInqsForCDD} that (\ref{eq:CDD-II}) is sharp for the entire range of these parameters, for all $v \in [0,1]$ and $n \geq 2$. The analogous sharpness in the case $n=1$ is immediate since the one-dimensional spaces $([-a,b],\abs{\cdot},c J_{H,\rho,N}(t) dt)$ all satisfy the $\CDD(\rho,N,D)$ condition (with the exception of the pathological case $N=n=1$, $\rho < 0$ and $D < \infty$, whose model space is ill-defined, and which we henceforth exclude from any further discussion regarding sharpness). 

\subsection{Isoperimetric Inequalities for Arbitrary Curvature and Diameter}

Our first result extends these isoperimetric inequalities to the entire range $N \in (-\infty,1) \cup [n,\infty]$. 
In Subsection \ref{subsec:no-extend}, we comment that this extension is \emph{no longer valid} when $N \in [1,n)$. A subtle detail appearing below is that the Gromov-L\'evy and Flat Isoperimetric profiles may no longer coincide in the range $N \in (0,1)$ as in (\ref{eq:CDD-II}).

\begin{thm}[Curvature-Dimension-Diameter Isoperimetric Inequality, extending \cite{EMilmanSharpIsopInqsForCDD}] \label{thm:CDD-II} Let $(M^n,g,\mu)$ satisfy $\CDD(\rho,N,D)$ with $n \geq 1$, $\rho \in \Real$, $D \in (0,\infty]$ and $N \in (-\infty,\infty]$. Then the following isoperimetric inequalities hold:
\begin{enumerate}
\item If $N \in (-\infty,1) \cup [n,\infty]$ then:
\[
\I(M,g,\mu) \geq \GL_{\rho,N,D} .
\]
In particular, the case $n=1$ yields $\J_{\rho,N,D} \geq \GL_{\rho,N,D}$ for all $\rho \in \Real$, $N \in (-\infty,\infty]$ and $D \in (0,\infty]$.
\item
When $D = \infty$ or $N \in (-\infty,0] \cup [1,\infty]$, then $\GL_{\rho,N,D} = \J_{\rho,N,D}$.
Consequently, when $D = \infty$ and $N \in (0,1)$ or when $N \in (-\infty,0] \cup [n,\infty]$, then:
\[
\I(M,g,\mu) \geq \J_{\rho,N,D} .
\]
\end{enumerate}
\end{thm}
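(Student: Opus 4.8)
The plan is to reduce to a one-dimensional comparison via the generalized Heintze--Karcher theorem, exactly as in \cite{EMilmanSharpIsopInqsForCDD}. By that theorem, $\CDD(\rho,N,D)$ implies $\text{Jac-CD}(\rho,N)$ together with $\mathrm{diam}(M,g)\leq D$, for every $N\in(-\infty,1)\cup[n,\infty]$; and the implication ``$\text{Jac-CD}(\rho,N)$ and $\mathrm{diam}\leq D$ $\Rightarrow$ $\I(M,g,\mu)\geq\GL_{\rho,N,D}$'' is already proved in \cite{EMilmanSharpIsopInqsForCDD} for $N\in[n,\infty]$. So the real content is (a) to carry that implication into the range $N\in(-\infty,1)$, and (b) to decide when $\GL_{\rho,N,D}=\J_{\rho,N,D}$.

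For (a) I would first handle $n=1$. A weighted interval $(L,|\cdot|,f\,ds)$ satisfying $\CDD(\rho,N,D)$ has $-\LogHess_{N-1}f\geq\rho$ on $L$ and $|L|\leq D$. Since the model equation $-\LogHess_{N-1}J=\rho$ is invariant under $J\mapsto cJ$ and under translations of the variable, for each $x_0\in L$ the normalized density $t\mapsto f(x_0+t)/f(x_0)$ again satisfies the same differential inequality, with value $1$ and derivative $H:=(\log f)'(x_0)$ at the origin. Writing $u=J^{1/(N-1)}$, this inequality becomes $u''+\tfrac{\rho}{N-1}u\geq 0$, and a Sturm comparison against the model solution $u_0=c_\delta+\tfrac{H}{N-1}s_\delta$ gives $u\geq u_0$ on the relevant interval, whence $f(x_0+t)/f(x_0)=u^{N-1}\leq u_0^{\,N-1}=J_{H,\rho,N}(t)$ --- the two sign reversals (dividing by $N-1<0$, and applying the decreasing map $x\mapsto x^{N-1}$) cancelling, so that the conclusion is formally identical to the classical case $N>1$; this is precisely the mechanism that makes the generalized Heintze--Karcher theorem valid for $N<1$. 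Feeding this bound in around the endpoint of a half-line, and using that $\GL_{\rho,N,D}$ is non-increasing in $D$, yields the desired inequality for half-lines, and then for arbitrary Borel sets after a direct inspection of the interval-type isoperimetric minimizers in one dimension. Applying the outcome to the model spaces $([-a,b],|\cdot|,J_{H,\rho,N}\,ds)$ themselves --- which satisfy $\CDD(\rho,N,a+b)$ --- then gives $\J_{\rho,N,D}\geq\GL_{\rho,N,D}$, i.e.\ the ``in particular'' claim of part (1).

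For $n\geq 2$ I would reduce to the one-dimensional case along the normal exponential foliation of an isoperimetric minimizer, as in \cite{EMilmanSharpIsopInqsForCDD}. When $D<\infty$ the manifold is compact, minimizers $A$ exist, and by regularity theory the reduced boundary $\Sigma=\partial^{*}A$ is, away from a relatively closed $\mathrm{vol}_{\Sigma}$-null set, a smooth hypersurface of constant generalized mean curvature $H$ (when $D=\infty$ one argues by approximation). The map $F_\Sigma(x,t)=\exp_x(t\nu(x))$ foliates $M$ by geodesics along which the weighted Jacobian $J_{\Sigma,\mu,x}$ satisfies $-\LogHess_{N-1}J_{\Sigma,\mu,x}\geq\rho$, $J_{\Sigma,\mu,x}(0)=1$, $J'_{\Sigma,\mu,x}(0)=H$, so each needle carries a $\CDD(\rho,N,D)$ structure (using that the two geodesic rays issuing from each $x\in\Sigma$ concatenate into a minimizing segment, whence their combined length is at most $D$); disintegrating $\mu$ along the foliation, and noting $\mu^+(A)=\mu_{\Sigma}(\Sigma)$, together with the pointwise Heintze--Karcher bound $J_{\Sigma,\mu,x}\leq J_{H,\rho,N}$, reduces the required inequality to the one-dimensional estimate above. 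For part (2) I would verify $\GL_{\rho,N,D}=\J_{\rho,N,D}$ directly from the definitions in the range specified there: when $D=\infty$ the admissible models live on all of $\Real$ and the extra freedom in $\J$ buys nothing, while when $N\leq 0$ the density $J_{H,\rho,N}$ is non-integrable near the roots of its base, forcing every model interval admissible for $\J_{\rho,N,D}$ to lie strictly inside the interval of positivity, where --- again by translation/scaling invariance of the model equation --- any half-line configuration coincides with a half-line-at-the-origin configuration admissible for $\GL$.

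The main obstacle I anticipate is the one-dimensional analysis for $N\in(-\infty,1)$: the model densities $J_{H,\rho,N}=((c_\delta+\tfrac{H}{N-1}s_\delta)_+)^{N-1}$ now \emph{blow up} at the roots of their base, are log-\emph{convex} rather than log-concave when $\rho\leq 0$, and are integrable near those roots exactly when $N>0$, so the isoperimetric minimizers of the model spaces need no longer be half-lines. This is precisely why $\GL_{\rho,N,D}$ and $\J_{\rho,N,D}$ genuinely differ for $N\in(0,1)$ with $D<\infty$, and why identifying the extremal sets --- and checking that $\GL_{\rho,N,D}$ remains a valid lower bound even when they are interior intervals --- requires a case analysis absent in the range $N\in[n,\infty]$. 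Secondary technical points are the usual treatment of boundary terms when $\partial M\neq\emptyset$ and the passage from approximate to exact minimizers when $D=\infty$, both handled as in \cite{EMilmanSharpIsopInqsForCDD}.
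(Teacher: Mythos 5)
Your overall strategy — reduce to one dimension via the generalized Heintze--Karcher theorem, use isoperimetric minimizers of constant generalized mean curvature for $n\geq 2$, and for the ``in particular'' claim apply the one-dimensional conclusion to the model spaces $([-a,b],|\cdot|,J_{H,\rho,N}\,ds)$ themselves — is precisely the route the paper takes, and your explanation of the cancelling sign reversals that make the Sturm comparison work for $N<1$ matches the mechanism in the paper's Theorem~\ref{thm:HK}. However, there is a genuine gap in your treatment of Part~(2). The direction $\J_{\rho,N,D}\geq\GL_{\rho,N,D}$ indeed follows from ``every half-line configuration of $\J$ becomes, after translation and renormalization, a configuration tested by $\GL$'' — this is exactly the one-dimensional instance of Part~(1). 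But the direction $\GL_{\rho,N,D}\geq\J_{\rho,N,D}$ is the nontrivial one, and your translation argument does not give it. For a fixed $(a,b,H)$, the quantity $\max\bigl(v/\eta_H(L_-),\,(1-v)/\eta_H(L_+)\bigr)$ has no reason to dominate the half-line boundary measure $J_{H,\rho,N}(c)/\eta_H(L)$ at the point $c$ where the left mass equals $v$; these are genuinely different numbers when the origin is not the $v$-quantile. The paper's proof (Lemma~\ref{lem:model-equiv}) exploits the fact that one can \emph{choose} $H$ to balance the two arms of the max — i.e.\ find $H_0$ with $\eta_{H_0}(L_-)/\eta_{H_0}(L)=v$ — and at that balanced $H_0$ the $\GL$ quantity collapses to $1/\eta_{H_0}(L)$, which is a bona fide $\J$ value. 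This existence of a balanced $H_0$ rests on the continuity and monotonicity of $H\mapsto\eta_{\pm H}(L_\pm)$ in $[0,\infty]$. Establishing that continuity (Proposition~\ref{prop:eta}(2)) is exactly the subtle step in the range $N<1$, since the model densities can be infinite at the roots of $c_\delta+\frac{H}{N-1}s_\delta$ yet still integrable there when $N\in(0,1)$, producing a lower semi-continuous jump from a finite to an infinite value at the endpoints — this is precisely why the hypothesis $N\in(-\infty,0]\cup[1,\infty]$ or $D=\infty$ appears in Part~(2) and cannot be dropped. Your proposal gestures at the non-integrability when $N\leq 0$ but does not draw out the continuity consequence, nor does it supply the balancing step; as written, it would not yield the inequality. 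A secondary omission: when $\int_{-a}^b J_{H,\rho,N}\,dt=\infty$ for every $H$ (so $\J_{\rho,N,D}(v)=0$ by convention), one must separately verify $\GL_{\rho,N,D}(v)=0$ (Proposition~\ref{prop:eta}(3)); your appeal to ``model spaces satisfying $\CDD$'' does not cover this degenerate case since there is then no normalizable model density.
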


As in \cite{EMilmanSharpIsopInqsForCDD}, the $\CDD(\rho,N,D)$ isoperimetric inequality was deliberately formulated to cover the entire range of values for $\rho$, $N$ and $D$ simultaneously, indicating its universal character, but it may be easily reformulated in an equivalent simplified manner, depending on the different values of these parameters. When $N \in [n,\infty]$, seven cases were described in \cite{EMilmanSharpIsopInqsForCDD}, extending the classical sharp isoperimetric comparison theorems of Gromov--L\'evy \cite{GromovGeneralizationOfLevy} (extended by Bayle \cite[Appendix E]{BayleThesis}) and Bakry--Ledoux \cite{BakryLedoux}. Here, we add four more cases corresponding to the range $N \in (-\infty,1)$, revealing new model spaces for the isoperimetric problem under a Curvature-Dimension-Diameter condition in that range:

\begin{cor} \label{cor:model}
The $\CDD(\rho,N,D)$ condition implies the following new isoperimetric inequalities in the range $N \in (-\infty,1)$ (all infima and minima below are interpreted pointwise, and $\delta := \rho / (N-1)$):
\begin{description}
 \item[Case 1 - \mbox{$N \in (-\infty,1)$}, $\rho > 0$, $D=\infty$:]
\[
\I(M^n,g,\mu) \geq \J(\cosh(\sqrt{-\delta} t)^{N-1}, \Real) . \]
\item[Case 2 - \mbox{$N \in (-\infty,0]$}, $\rho > 0$, $D < \infty$:]
\[
 \I(M^n,g,\mu) \geq \min \left \{ \begin{array}{l}
\inf_{\xi > 0} \J( \sinh(\sqrt{-\delta} t)^{N-1}, [\xi,\xi+D] ) ~ , \\
\phantom{\inf_{\xi \in \Real}} \J( \;\; \exp(\sqrt{-\delta} t)^{N-1} , [0,D] ) ~, \\
\inf_{\xi \in \Real} \J(\cosh(\sqrt{-\delta} t)^{N-1},[\xi,\xi+D] )
\end{array}
\right \} .
\]
\item[Case 3 - \mbox{$N \in (-\infty,0]$}, $\rho = 0$, $D < \infty$:]
\[
\I(M^n,g,\mu) \geq 
\min \left \{ \begin{array}{l}  \inf_{\xi  > 0} \J( t^{N-1} , [\xi,\xi+D] ) ~,\\
\phantom{\inf_{\xi > 0}} \J(1,[0,D])
\end{array}
\right \} 
\]
\item[Case 4 - \mbox{$N \in (-\infty,0]$}, $\rho < 0$, $D < \frac{\pi}{\sqrt{\delta}}$:]
\[
\I(M^n,g,\mu) \geq \inf_{\xi \in (0,\pi/\sqrt{\delta}-D)} \J\brac{\sin(\sqrt{\delta} t)^{N-1}, [\xi,\xi +D] } .
\]

\end{description}
\end{cor}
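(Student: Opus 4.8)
The plan is to derive Corollary~\ref{cor:model} as a direct specialization of Theorem~\ref{thm:CDD-II}, by unwinding the definition of $\GL_{\rho,N,D}$ (and $\J_{\rho,N,D}$) in each of the four parameter regimes and identifying which extremal Jacobian functions $J_{H,\rho,N}$ actually attain the infimum. In each case $N \in (-\infty,1)$ is fixed and $\delta = \rho/(N-1)$ has sign opposite to $\rho$ (since $N-1 < 0$): so $\rho > 0 \Leftrightarrow \delta < 0$, $\rho = 0 \Leftrightarrow \delta = 0$, $\rho < 0 \Leftrightarrow \delta > 0$. The first step is therefore to tabulate, for $\delta < 0$, $\delta = 0$, $\delta > 0$, the function $t \mapsto \brac{\brac{c_\delta(t) + \tfrac{H}{N-1} s_\delta(t)}_+}^{N-1}$, and to observe that its zero set (hence the effective interval $[\xi_-,\xi_+]$ on which $J_{H,\rho,N}$ is positive) is governed by the zeros of the combination $c_\delta + \tfrac{H}{N-1} s_\delta$. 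Reparametrizing $H$ in terms of the location of these zeros — equivalently, translating the origin — converts the infimum over $H$ into an infimum over a translation parameter $\xi$, which is exactly the form appearing in the statement.

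Next I would handle the four cases in turn. For \textbf{Case 1} ($\delta < 0$, $D = \infty$), the admissible pair is $(a,b) = (\infty,\infty)$; for the integral $\int_{-\infty}^{\infty} J_{H,\rho,N}$ to be finite one needs the Jacobian to decay, which forces (after the translation normalization) $H = 0$, leaving the single model density $\cosh(\sqrt{-\delta}\,t)^{N-1}$ on all of $\Real$; one then appeals to part~(2) of Theorem~\ref{thm:CDD-II} ($D = \infty$) to replace $\GL$ by $\J$. For \textbf{Case 2} ($\delta < 0$, $D < \infty$, $N \le 0$), one runs through the possible values of $H$: the functions $c_\delta + \tfrac{H}{N-1} s_\delta$ with $\delta<0$ are, up to scaling and translation, of one of three shapes — a hyperbolic cosine $\cosh$ (when the combination has no real zero), a pure exponential $\exp(\pm\sqrt{-\delta}\,t)$ (the boundary case, a degenerate translate of $\cosh$ sent to infinity), or a hyperbolic sine $\sinh$ translated so that it has a zero — and on the interval $[-a,b]$ of length $D$ one records these as $\J(\sinh(\sqrt{-\delta}t)^{N-1},[\xi,\xi+D])$, $\J(\exp(\sqrt{-\delta}t)^{N-1},[0,D])$, $\J(\cosh(\sqrt{-\delta}t)^{N-1},[\xi,\xi+D])$ respectively, so the infimum over $H$ and over $(a,b)\in\Delta_D$ becomes the minimum over these three families; here $N \in (-\infty,0]$ lets us invoke part~(2) to pass from $\GL$ to $\J$. \textbf{Case 3} ($\delta = 0$) is the same analysis with $c_0 \equiv 1$, $s_0(t) = t$, so $J_{H,0,N}(t) = ((1 + \tfrac{H}{N-1}t)_+)^{N-1}$, which is either the constant $1$ (when $H=0$) or a translate of $t^{N-1}$ (when $H \ne 0$, with the zero placed at an endpoint), giving the stated minimum of $\J(t^{N-1},[\xi,\xi+D])$ and $\J(1,[0,D])$. \textbf{Case 4} ($\delta > 0$, hence $\rho < 0$) uses $c_\delta(t) = \cos(\sqrt{\delta}t)$, $s_\delta(t) = \sin(\sqrt{\delta}t)/\sqrt{\delta}$, and every real combination $c_\delta + \tfrac{H}{N-1}s_\delta$ is a translate of $\cos(\sqrt{\delta}t)$, i.e. of $\sin(\sqrt{\delta}t)$ up to a phase, with consecutive zeros a distance $\pi/\sqrt{\delta}$ apart; the constraint $D < \pi/\sqrt{\delta}$ guarantees that an interval of length $D$ fits strictly between two consecutive zeros, the translation parameter $\xi$ ranges over $(0,\pi/\sqrt{\delta}-D)$, and again part~(2) (here $N \le 0$) converts $\GL$ into $\J$.

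The routine but slightly delicate bookkeeping — which I would only sketch — is verifying that the change of variables from $H$ to the translation parameter $\xi$ is exhaustive (every finite-mass choice of $H$ and every $(a,b) \in \Delta_D$ yields, after translating the origin to a convenient reference point, one of the listed densities on an interval of length $D$, and conversely), and that the degenerate exponential case in Cases 1--2 is genuinely needed (it is the limit $H \to \pm\infty$ of the $\cosh$ family, which does not collapse into either the $\cosh$ or $\sinh$ family and can give a strictly smaller profile). One must also check the finiteness condition $\int_{-a}^b J_{H,\rho,N}(t)\,dt < \infty$ built into the definition of $\J_{\rho,N,D}$: on a bounded interval $[-a,b]$ this is automatic when the interval stays inside $[\xi_-,\xi_+]$ and fails (giving profile $0$, hence no constraint) otherwise, so it does not affect the stated minima. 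The main obstacle is thus not any single hard estimate but the careful case enumeration: making sure the three (resp.\ two, resp.\ one) model families listed in each case really do cover all extremal one-dimensional Jacobian models compatible with the given sign of $\rho$ and the diameter bound, and that the passage from $\GL$ to $\J$ via Theorem~\ref{thm:CDD-II}(2) is legitimate in precisely the ranges $N \in (-\infty,0]$ (or $D = \infty$) where it is asserted. Once that enumeration is pinned down, the corollary is an immediate transcription of Theorem~\ref{thm:CDD-II}.
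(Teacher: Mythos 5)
Your plan follows the paper's own proof essentially step by step: writing $J_{H,\rho,N}$ via hyperbolic (resp.\ trigonometric, resp.\ polynomial) addition formulas, reparametrizing the mean-curvature parameter $H$ as a phase/translation parameter $\alpha$ or $\xi$, enumerating the three shapes $\cosh$, $\exp$, $\sinh$ (or $\sin$, or $t^{N-1}$ and the constant), and invoking Theorem~\ref{thm:CDD-II}(2) to pass from $\GL$ to $\J$ precisely when $D=\infty$ or $N\leq 0$. One small slip in your bookkeeping remark: the degenerate exponential in Cases~1--2 is not the limit $H\to\pm\infty$ (Lemma~\ref{lem:monotone} shows $J_{tH,\rho,N}(t)\to\infty$ there); it is the boundary value $\beta:=H/((N-1)\sqrt{-\delta})=\pm 1$, i.e.\ a finite $H$, corresponding to the translation parameter $\alpha=\arctanh(\beta)$ or $\arccoth(\beta)$ tending to $\pm\infty$ as you correctly say elsewhere.
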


The elementary analysis is deferred to Section \ref{sec:model}. By extending our setup in Subsection \ref{subsec:1D} to incorporate the one-dimensional case $n=1$, we think of the above densities as model spaces for the isoperimetric problem under the Curvature-Dimension-Diameter condition $\CDD(\rho,N,D)$ in the above range of parameters, implying the sharpness of our results in this range, at least in the (topological) one-dimensional case $n=1$. 

\medskip

Despite the similarity between Cases 2--4 and the model-spaces which appeared in \cite{EMilmanSharpIsopInqsForCDD} in the case $N \in [n,\infty]$, note that $N-1 < 0$ and that the sign of $\rho$ is reversed, and so the various model densities appearing above are genuinely different, being reciprocals of the ones in \cite{EMilmanSharpIsopInqsForCDD}. Of particular interest is Case 1, which may be called a (positively curved) sphere of (possibly negative) dimension $N \in (-\infty,1)$. This case is of special note since it yields a \emph{single} model density - a property which has thus far been reserved for the case $\rho > 0$ and $N \in [n,\infty]$ (yielding the $N$-sphere when $N < \infty$ and Gaussian measure when $N=\infty$ as the corresponding model spaces).

\begin{rem} Observe that it is not possible to extend Part 2 of Theorem \ref{thm:CDD-II} (at least as is) to the remaining range $N \in (0,1)$ and $D < \infty$. 
For instance, the one-dimensional probability measure $\mu = c_N \sin^{N-1}(t)$ on $(0,\pi)$ with $N \in (0,1)$ satisfies $\text{CDD}(N-1,N,\pi)$, but it's isoperimetric and hence Gromov--L\'evy profiles $\I([0,\pi],\abs{\cdot},\mu)$ and $\GL_{N-1,N,\pi}$ do not coincide with their flat counterpart $\J([0,\pi],\abs{\cdot},\mu) =\J_{N-1,N,\pi}$, since an isoperimetric minimizer of small measure will be a symmetric interval around $\pi/2$, rather than a set of the form $(0,t)$ (whose boundary measure tends to $+\infty$ as $t \rightarrow 0$). Consequently, the analysis of the range $N \in (0,1)$ and $D < \infty$ was excluded from Corollary \ref{cor:model}. While we expect that this range will not have a similar aesthetically pleasing description as the one given in Corollary \ref{cor:model}, note that Part 1 of Theorem \ref{thm:CDD-II} should still yield a meaningful result, but we refrain from developing this here. 
\end{rem}

\subsection{Non-Negative Curvature}

In Section \ref{sec:non-neg}, we investigate the isoperimetric and functional consequences of the $\CD(0,N)$ condition for $N < 0$, when instead of a diameter upper bound, we are given some other information - in the form of a concentration inequality. The results we obtain are appropriately modified analogues of our previous results from \cite{EMilman-RoleOfConvexity,EMilmanGeometricApproachPartI,EMilmanIsoperimetricBoundsOnManifolds,EMilmanGeometricApproachPartII} obtained for $N \in [n,\infty]$, and the results of Bobkov from \cite{BobkovConvexHeavyTailedMeasures} obtained in the Euclidean setting. 
In particular, we show that the function $\I(M,g,\mu)^{\frac{N}{N-1}}$ is weakly-concave, and hence the $N$-dimensional Cheeger isoperimetric constant, which we define as:
\[
 D_{Che,N}(M,g,\mu) := \inf_{v \in (0,1)} \frac{\I(M,g,\mu)(v)}{\min(v,1-v)^{\frac{N-1}{N}}} ,
\]
always attains its infimum at $v = 1/2$. It follows that always $D_{Che,N} > 0$ and that any concentration inequality immediately translates to a positive lower bound on $D_{Che,N}$, and hence upgrades to an $N$-degree polynomial concentration - see Theorem \ref{thm:Isop-Conc-NSpace}. This is used to deduce stability results for $D_{Che,N}$ under various types of measure perturbations (Theorem \ref{thm:stability}), and to assert the equivalence of weak $L^p$-$L^q$ Sobolev inequalities when $\frac{1}{q} \leq \frac{1}{p} + \frac{1}{N}$ (Theorem \ref{thm:main-functional}). In Theorem \ref{thm:main-functional}, we also deduce the following weak Sobolev and Nash-type inequalities on $\CD(0,N)$ weighted manifolds:
\begin{enumerate}
\item For any locally Lipschitz function $f : (M,g) \rightarrow \Real$ with zero median, we have:
\[
\norm{f}_{L^{p,p \frac{N-1}{N}}(\mu)} \leq D_{Che,N}^{-1}  2^{-\frac{1}{N}} p \brac{\frac{q}{p}}^{\frac{1}{q}} \norm{\abs{\nabla f}}_{L^{q,p \frac{N-1}{N}}(\mu)} , 
\]
for any $p,q$ satisfying $\frac{N}{N-1} \leq p \leq -N$ and $\frac{1}{q} = \frac{1}{p} + \frac{1}{N}$. Here $L^{\alpha,r}(\mu)$ denotes the Lorentz quasi-norm (see Definition \ref{def:Lorentz} for our particular choice of normalization). 
\item
For any essentially bounded locally Lipschitz function $f : (M,g,\mu) \rightarrow \Real$ with zero median and $p \geq 1$:
\[
\norm{f}_{L^p(\mu)} \leq D_{Che,N}^{-\frac{N}{N-p}} 2^{-\frac{1}{N-p}}  p^{\frac{N}{N-p}}  \norm{\abs{\nabla f}}_{L^p(\mu)}^{\frac{N}{N-p}}  \norm{f}_{L^\infty(\mu)}^{-\frac{p}{N-p}} ~.
\]
\end{enumerate}

\subsection{Positive Curvature}

In Section \ref{sec:pos}, we consider weighted manifolds satisfying $\CD(\rho,N)$ with $N<1$ and $\rho > 0$. By Case (1) of Corollary \ref{cor:model}, their isoperimetric profile is governed by that of the model density $\cosh^{N-1}(\sqrt{\rho/(1-N)} t)$ on $\Real$. This has some curious consequences: it implies a uniform Poincar\'e inequality for all $N \in (-\infty,1-\eps]$, in contrast to the Lichnerowicz estimate obtained by Kolesnikov--Milman \cite{KolesnikovEMilmanReillyPart1} and Ohta \cite{Ohta-NegativeN}, which explodes as $N < 0$ increases to $0$. Furthermore, it implies a two-level concentration inequality, having tail-decay of the form $\exp(-\min(\rho t^2, \sqrt{\rho} \sqrt{1-N} t))$, which constitutes an interesting intermediate behaviour between a Poincar\'e and log-Sobolev inequality. 
Such two-level concentration naturally appears in Bernstein-type estimates on deviation of sums of independent random-variables having exponential tail decay (see e.g. \cite{LedouxTalagrand-Book,BLM-Book}), but we did not find an explicit connection between the latter setup and the former one. A concrete example of a family of certain harmonic measures on the sphere satisfying $\CD(\rho,N)$ with $N<1$ and $\rho > 0$ is provided in \cite{EMilmanHarmonicMeasures}. 

\medskip

Concluding remarks are made in Section \ref{sec:conclude}. In particular, we comment there regarding additional previously known results and methods for establishing isoperimetric and functional inequalities under a Curvature-Dimension condition. We also comment on our subsequent work \cite{EMilman-GradedCD}, in which we introduce and study the Graded Curvature-Dimension condition. 

\medskip
\noindent \textbf{Acknowledgement.} I warmly thank Will Wylie for informing me about his results on sectional curvature, and the referee for very carefully reading the manuscript.

\section{Generalized Heintze--Karcher Inequality for $N \in (-\infty,1)$} \label{sec:HK}

In this section, we extend the generalized Heintze--Karcher Theorem to the entire range $N \in (-\infty,1) \cup [n,\infty]$. The classical case $N=n$ (constant density) is due to Heintze and Karcher \cite{HeintzeKarcher},\cite[Theorem 4.21]{GHLBookEdition3}. The extension to the range $N \in (n,\infty]$ in the weighted manifold setting is due to Bayle \cite[Appendix E]{BayleThesis} ($N \in (n,\infty)$) and Morgan \cite{MorganManifoldsWithDensity} ($N=\infty$).

Given a $C^2$ hypersurface $S$ in $(M^n,g)$ co-oriented by a unit normal vector field $\nu$, recall that the \emph{mean-curvature} of $S$ at $x$, denoted $H^\nu_S(x)$, is defined as the trace of the second fundamental form $\II^\nu_{S,x}$; it governs the first variation of area under normal deformations. We conform to the following \emph{non-standard} convention for specifying the sign of $\II^\nu_{S,x}$: the second fundamental form of the sphere in Euclidean space with respect to the \emph{outward} normal is \emph{positive} definite (formally: $\II^\nu_{S,x}(u,v) = g(\nabla_u \nu , v)$ for $u,v \in T_x S$). In the weighted manifold setting, the first variation of weighted area is governed by the \emph{generalized mean-curvature} (see \cite{BayleThesis} or the proof of Theorem \ref{thm:HK} below):

\begin{defn*}
The generalized mean-curvature of $S$ at $x \in S$ with respect to the measure $\mu$ and unit normal vector field $\nu$, denoted $H_{S,\mu}^\nu(x)$, is defined as:
\[
H_{S,\mu}^\nu(x) := H_S^\nu(x) + \nu(\log \Psi) (x) ~.
\]
\end{defn*}

\begin{thm}[Generalized Heintze--Karcher] \label{thm:HK}
Let $(M^n,g,\mu)$ satisfy the $\CD(\rho,N)$ condition, $N \in (-\infty,1) \cup [n,\infty]$.
Then it satisfies the $\text{Jac-CD}(\rho,N)$ condition. 

In particular, if $S$ denotes a $C^2$ co-oriented hypersurface in $(M^n,g,\mu)$ with normal unit vector field $\nu$, then for any $r > 0$:
\[ \mu(S_r^+) \leq \int_S \int_0^{r} J_{H^\nu_{S,\mu}(x),\rho,N}(t) dt \; dvol_{S,\mu}(x) ~,
\]
where: \[
S_r^+ := \set{\exp_x(t \nu(x)) ; x \in S , t \in [0,r], t \nu(x) \in \text{Dom}(\exp_x) } ~.
\]
\end{thm}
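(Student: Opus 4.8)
The plan is to reduce the Heintze--Karcher inequality to the ordinary differential inequality (\ref{eq:JacEq}) on the weighted Jacobian, and then to integrate using the extremal Jacobian functions $J_{H,\rho,N}$. The second, ``in particular'' assertion follows immediately from the first: if $\text{Jac-CD}(\rho,N)$ holds, then for each $x \in S$ the weighted Jacobian $J(t) = J_{S,\mu,x}(t)$ satisfies $-\LogHess_{N-1} J \geq \rho$ on its maximal interval of positivity $L \ni 0$, with $J(0) = 1$. A comparison lemma (this is exactly the content of \cite{EMilmanSharpIsopInqsForCDD} encoded in Remark \ref{rem:J-eq}: solutions of $-\LogHess_{N-1} J = \rho$ with prescribed $J(0) = 1$, $J'(0) = H$ dominate subsolutions with the same initial data, on the interval where the extremal solution stays positive) then gives $J_{S,\mu,x}(t) \leq J_{H,\rho,N}(t)$ for $t \geq 0$, where $H = J'(0) = H^\nu_{S,\mu}(x)$ is the generalized mean curvature (the identification of $J'(0)$ with the generalized mean curvature is the first-variation-of-weighted-area computation; cf.\ \cite{BayleThesis}). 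Integrating $F_S^*(\mu) = J_{S,\mu,x}(t)\,dvol_{S,\mu}(x)\,dt$ over the region parametrizing $S_r^+$ and discarding the (measure-zero) cut locus overlaps yields the stated bound.

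So the real work is proving $\CD(\rho,N) \Rightarrow \text{Jac-CD}(\rho,N)$, i.e.\ establishing the pointwise differential inequality $-\LogHess_{N-1} J_{S,\mu,x} \geq \rho$ for $N \in (-\infty,1)$. First I would recall the classical Riccati/Jacobi setup: writing $U(t)$ for the shape operator (the $(1,1)$-tensor obtained from the second fundamental form) of the parallel hypersurfaces $F_S(\cdot, t)$ and $y(t) = (\log J_{S,x})'(t) = \operatorname{tr} U(t)$, one has the matrix Riccati equation $U' + U^2 + R_t = 0$ where $R_t$ is the Jacobi curvature operator $v \mapsto R(v,\dot\gamma)\dot\gamma$, and taking traces plus Cauchy--Schwarz ($\operatorname{tr}(U^2) \geq (\operatorname{tr} U)^2/(n-1)$) gives $y' \leq -\tfrac{1}{n-1} y^2 - \Ric_g(\dot\gamma,\dot\gamma)$. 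Then I incorporate the density: with $\psi(t) = \log\Psi(F_S(x,t))$, the weighted log-Jacobian has derivative $\tilde y(t) := (\log J_{S,\mu,x})'(t) = y(t) + \psi'(t)$, and the definition of the generalized Ricci tensor gives $\Ric_g(\dot\gamma,\dot\gamma) = \Ric_{g,\mu,N}(\dot\gamma,\dot\gamma) + \psi''(t) + \tfrac{1}{N-n}\psi'(t)^2 \geq \rho + \psi''(t) + \tfrac{1}{N-n}\psi'(t)^2$. Combining, $\tilde y' = y' + \psi'' \leq -\rho - \tfrac{1}{n-1}y^2 - \tfrac{1}{N-n}\psi'^2$, and the task becomes showing $-\tfrac{1}{n-1}y^2 - \tfrac{1}{N-n}\psi'^2 \leq -\tfrac{1}{N-1}(y+\psi')^2 = -\tfrac{1}{N-1}\tilde y^2$, which is the inequality $-\LogHess_{N-1} J_{S,\mu,x} = -(\tilde y' + \tfrac{1}{N-1}\tilde y^2) \geq \rho$.

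The main obstacle — and the point where the sign of $N-1$ matters — is this last elementary inequality, which is a statement about quadratic forms in $(y,\psi')$ with coefficients depending on $n$ and $N$. For $N \in [n,\infty]$ it is the familiar Bakry--\'Emery-type algebraic identity/inequality; for $N \in (-\infty,1)$ one has $N - 1 < 0$, $N - n < 0$, and $n - 1 > 0$, so the signs of the three terms are not all favourable and one must check the inequality honestly (e.g.\ it reduces, after clearing denominators — being careful that $N-1$ and $N-n$ are negative so inequalities flip — to a perfect-square condition of the form $(n-1)(N-n)\big((N-1)y - (n-1)\psi'\big)^2 \geq 0$ or similar, which holds precisely because $(n-1)(N-n) > 0$ in this regime but would fail for $N \in (1,n)$, explaining the excluded range). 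I would also need to handle with care the endpoint conventions $N \in \{0,\infty\}$ via the stated $1/0 = +\infty$, $\infty\cdot 0 = 0$ rules, the degenerate case $n = 1$ (where there is no shape operator and $J_{S,\mu,x}(t) = \Psi(F_S(x,t))/\Psi(x)$ directly, making the inequality immediate from $\CD(\rho,N)$ along the geodesic), and the fact that $J_{S,\mu,x}$ need only be considered on the maximal interval $L$ where it is defined and positive — i.e.\ up to the focal/cut time — which is exactly what the definition of $\text{Jac-CD}$ and the ${}_+$ truncation in $J_{H,\rho,N}$ are set up to accommodate.
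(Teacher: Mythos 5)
Your proposal follows essentially the same route as the paper: Riccati inequality for the unweighted Jacobian, separate identity for the density term, summation so the generalized Ricci tensor appears, an elementary quadratic inequality to combine the two $-\frac{1}{(\cdot)}(\cdot)^2$ terms, and a Sturm--Liouville comparison to pass from the differential inequality to the pointwise bound $J_{S,\mu,x}\leq J_{H^\nu_{S,\mu},\rho,N}$ before integrating over $S$.

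One computational slip worth correcting: after clearing denominators, the perfect square is $\bigl((N-n)y-(n-1)\psi'\bigr)^2\geq 0$ (not $(N-1)y-(n-1)\psi'$), and the multiplier whose sign controls the argument is $(n-1)(N-n)(N-1)$, not $(n-1)(N-n)$. For $n\geq 2$ and $N<1$ one has $(n-1)(N-n)<0$ (so your stated reason is false), but $(n-1)(N-n)(N-1)>0$, which is what makes the inequality go the right way; for $N\in(1,n)$ this product is negative, which is why the argument breaks there. The paper packages the same fact as: $\frac{1}{\alpha}A^2+\frac{1}{\beta}B^2\geq\frac{1}{\alpha+\beta}(A+B)^2$ holds whenever $\alpha,\beta>0$ or $\{\alpha+\beta<0,\ \alpha\beta<0\}$, with $\alpha=n-1$, $\beta=N-n$.
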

\begin{proof}
Recall that $F_S(x,t) = \exp_x( t \nu(x))$ denotes the normal map. Fix $x \in S$, set:
\[
 a_{\pm} = a_{\pm}(x) := \pm \sup \set{ t \geq 0 \; ; \;  \pm t \nu(x) \in \text{Dom}(\exp_x)}
,
\]
and denote $\nu_t(x) := \frac{d}{dt} F_S(x,t)$.  
By contracting the Jacobi equation and applying Cauchy--Schwarz, it is classical (see e.g. \cite[p. 72]{ChavelEigenvalues}) that the (unweighted) Jacobian $J_{S,x}(t)$ satisfies the following Riccati-type second-order differential inequality:
\begin{equation} \label{eq:HK1}
- (\log J_{S,x})''(t) - \frac{1}{n-1} ((\log J_{S,x})'(t))^2 \geq Ric_g (\nu_t(x) , \nu_t(x)) ,
\end{equation}
for any $t$ in a sub-interval of  $[a_-,a_+]$ containing the origin where $J_{S,x}$ remains positive (i.e. between the first negative and positive focal points). This already verifies the assertion in the constant density case $N=n$, so we may subsequently assume that $N \notin \set{n,\infty}$; the case of $N=\infty$ follows by an obvious modification of the argument below. 

Now, denoting $J_{W,x}(t) = \Psi(F(x,t)) / \Psi(x)$, we trivially have:
\begin{equation} \label{eq:HK2}
-(\log J_{W,x})''(t) - \frac{1}{N-n} ((\log J_{W,x})'(t))^2 = \brac{ - \nabla_g^2 \log \Psi - \frac{1}{N-n} \nabla_g \Psi \otimes \nabla_g \Psi }(\nu_t(x),\nu_t(x)) .
\end{equation}
Note that the right-hand sides of (\ref{eq:HK1}) and (\ref{eq:HK2}) sum up to precisely $Ric_{g,\mu,N}(\nu_t(x) , \nu_t(x))$, which by the $\CD(\rho,N)$ assumption is at least $\rho$. 
It remains to sum the left-hand sides above, noting that $\log J_{S,\mu,x} = \log J_{S,x} + \log J_{W,x}$. The quadratic non-linearity in the first-order derivatives is handled by an application of Cauchy--Schwarz in the form:
\begin{equation} \label{eq:HK-CS}
\frac{1}{\alpha} A^2 + \frac{1}{\beta} B^2 \geq \frac{1}{\alpha + \beta} (A+B)^2 \;\;\; \forall A,B \in \Real ~,
\end{equation}
valid as soon as $(\alpha,\beta)$ lay in either the set $\set{\alpha, \beta > 0}$ or the set $\set{ \alpha + \beta < 0 \text{ and }\alpha \beta < 0}$, and this is precisely ensured by the assumption that $N \in (-\infty,1) \cup (n,\infty)$. It follows that:
\begin{equation} \label{eq:HK3}
-(\log J_{S,\mu,x})''(t) - \frac{1}{N-1} ((\log J_{S,\mu,x})'(t))^2  \geq \rho ,
\end{equation}
for any $t$ in a sub-interval of  $[a_-,a_+]$ containing the origin where $J_{S,\mu,x}$ remains positive, thereby verifying the $\text{Jac-CD}(\rho,N)$ condition. 

To show the second part of the assertion, note that $J_{S,\mu,x}(0) = J_{S,x}(0) = J_{W,x}(0) = 1$, and that $J_{S,x}'(0) = H^\nu_{S}(x)$ by the classical first variation formula. We thus immediately verify that the first variation of weighted area is indeed given by the generalized mean-curvature:
\[
J_{S,\mu,x}'(0) = J_{S,x}'(0) + J_{W,x}'(0) = H^\nu_S(x) + \nu(\log \Psi)(x) = H^\nu_{S,\mu}(x) .
\]
Setting $\N := N-1$ and rewriting (\ref{eq:HK3}) as:
\[
- \N \frac{(J_{S,\mu,x}^{1/\N})''}{J_{S,\mu,x}^{1/\N}} \geq \rho ,
\]
and setting $h = J_{S,\mu,x}^{1/\N}$, we see that:
\[
\N h''(t) + \rho h(t) \leq 0 \;\; , \;\; h(0) = 1 \;\; , \;\; h'(0) = \frac{H^\nu_{S,\mu}(x)}{\N} ,
\]
for any $t$ in a sub-interval of  $[a_-,a_+]$ containing the origin on which $h$ remains positive.
Comparing with the solution $h_0$ to the equality case in the above ODE, the classical Sturm--Liouville theory (or just calculating the derivative of $h' h_0 - h h_0'$) ensures that:
\begin{equation} \label{eq:N-sign}
\N h(t) \leq \N h_0(t) ,
\end{equation}
for any $t$ in a sub-interval of  $[a_-,a_+]$ containing the origin on which both $h$ and $h_0$ remain positive. Coupled with Remark \ref{rem:J-eq}, we conclude (regardless of the sign of $\N$) that for any $t$ in a sub-interval of  $[a_-,a_+]$ containing the origin on which $J_{S,\mu,x}(t)$ remains positive:
\begin{equation} \label{eq:N-sign2}
J_{S,\mu,x}(t) = h^{\N}(t) \leq ((h_0)_+(t))^{\N} =  J_{H^\nu_{S,\mu}(x),\rho,N}(t) , \end{equation}
and hence for all $t \in [a_-,a_+]$:
\begin{equation} \label{eq:HK4}  (J_{S,\mu,x})_+(t) \leq  J_{H^\nu_{S,\mu}(x),\rho,N}(t) .
\end{equation}

It now remains to use the well-known fact (see e.g. \cite{HeintzeKarcher,ChavelRiemannianGeometry1stEd}) that the normal map $F_S(x,t)$ remains onto $S^+_r$ even when extending the normal ray only until the first focal point (first positive root of $J_{S,x}(t)$ and hence $J_{S,\mu,x}(t)$). Consequently:
\[
\mu(S_r^+) \leq \int_S \int_0^{\min(r,a_+(x))} (J_{S,\mu,x})_+(t) \; dt \; dvol_{S,\mu}(x) ,
\]
and using (\ref{eq:HK4}), the assertion follows. 
\end{proof}

\section{Isoperimetric Inequalities} \label{sec:isop}

\subsection{Extending Setup to One-Dimensional Case} \label{subsec:1D}

We begin with extending our setup to the one-dimensional case. 

\begin{defn*}
The one-dimensional space $(M,|\cdot|,\mu)$ with $M \subset \Real$ is said to satisfy the $\CDD(\rho,N,D)$ condition, $N \in (-\infty,\infty]$, $\rho \in \Real$, $D \in (0,\infty]$, if $\mu$ is a  measure supported on the closure of an open interval $L \subset M$ of length at most $D$, having positive $C^2$-smooth density $\Psi$ on $L$, and satisfying:
\begin{equation} \label{eq:1d-ODE}
-\LogHess_{N-1} \Psi \geq \rho \; \text{ in } L ,
\end{equation}
(with the usual interpretation when $N=1$ or $N=\infty$). When $D=\infty$, the $\CDD(\rho,N,\infty)$ condition is also denoted $\CD(\rho,N)$. 
\end{defn*}

\subsection{Elementary Properties of $J_{\rho,N,D}$}

\begin{lem} \label{lem:monotone} Given $t , \rho\in \Real$ and $N \in (-\infty,\infty]$, the function $\Real \ni H\rightarrow J_{H,\rho,N}(t) \in [0,\infty]$
is continuous and monotone in $H$. Namely, it is constant if $t = 0$ or $N=1$, otherwise $H \mapsto J_{t H,\rho,N}(t)$ is monotone non-decreasing with $\lim_{H \rightarrow -\infty}  J_{t H,\rho,N}(t) = 0$ and $\lim_{H \rightarrow +\infty}  J_{t H,\rho,N}(t) = \infty$.
\end{lem}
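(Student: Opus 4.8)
The plan is to reduce everything to the explicit formula for $J_{H,\rho,N}$ and handle the cases $N \in \{1,\infty\}$ separately from the generic case. The trivial cases first: if $t = 0$ then $J_{H,\rho,N}(0) = 1$ for all $H$ by Remark~\ref{rem:J-eq} (or directly from the formula, since $c_\delta(0)=1$, $s_\delta(0)=0$); if $N = 1$ then $J_{H,\rho,1}$ is by definition either $\equiv 1$ (when $\rho = 0$) or $\equiv \infty$ (when $\rho \neq 0$), independent of $H$. For $N = \infty$ we have $J_{H,\rho,\infty}(t) = \exp(Ht - \tfrac{\rho}{2}t^2)$, and for fixed $t \neq 0$ the map $H \mapsto \exp(tH)\cdot\exp(-\tfrac{\rho}{2}t^2)$ shows $H \mapsto J_{tH,\rho,\infty}(t) = \exp(t^2 H)\exp(-\tfrac\rho2 t^2)$ is continuous, strictly increasing, with limits $0$ and $\infty$ as $H \to \mp\infty$; this disposes of the $N = \infty$ case.

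For the main case $N \notin \{1,\infty\}$ and $t \neq 0$, write $\delta = \rho/(N-1)$ and set $g_H(t) := c_\delta(t) + \tfrac{H}{N-1}s_\delta(t)$, so that $J_{H,\rho,N}(t) = ((g_H)_+(t))^{N-1}$. I would first reparametrize: replacing $H$ by $tH$ as in the statement, one gets $g_{tH}(t) = c_\delta(t) + \tfrac{t\,s_\delta(t)}{N-1} H$. Since $t\,s_\delta(t) > 0$ for all $t \neq 0$ in the relevant range (indeed $s_\delta$ is odd and $s_\delta(t)/t > 0$ whenever $s_\delta(t)$ is defined and the argument is before the first zero; for $\delta \le 0$ this holds for all $t \neq 0$, and for $\delta > 0$ for $0 < |t| < \pi/\sqrt\delta$), the coefficient $t\,s_\delta(t)/(N-1)$ has a fixed sign, namely the sign of $N-1$, which is negative here. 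So as $H$ increases, $g_{tH}(t)$ moves monotonically, decreasing when $N - 1 < 0$. The key observation is then that the truncation-then-$(N-1)$-power operation $u \mapsto (u_+)^{N-1}$, applied to $g_{tH}(t)$ as a function of the single real parameter $H$, is monotone non-decreasing: when $N - 1 < 0$, the base $g_{tH}(t)$ decreases in $H$ but raising a positive number to a negative power reverses this, and the truncation $(\cdot)_+$ is itself monotone and sends the base to $0$ (hence $J$ to... one must be careful here).

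The delicate point — and the step I expect to be the main obstacle — is the interaction between the truncation $(\cdot)_+$ (which is the restriction to the interval $[\xi_-,\xi_+]$ between the first non-positive and first positive roots of $g_{tH}$, not literally $\max(g,0)$) and the negative exponent, because as $H \to -\infty$ the roots $\xi_\pm(H)$ move and one must check that the interval of positivity of $g_{tH}$ shrinks to exclude the fixed point $t$, forcing $J_{tH,\rho,N}(t) = 0$ in the limit; whereas as $H \to +\infty$ one must check $g_{tH}(t) \to 0^+$ (so that $(g_{tH}(t))^{N-1} \to +\infty$ since $N - 1 < 0$) while $t$ stays inside the positivity interval. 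Concretely: for $t > 0$ fixed, $g_{tH}(t) = c_\delta(t) + \tfrac{t s_\delta(t)}{N-1}H$ is an affine decreasing function of $H$ crossing zero at some $H_0$; for $H < H_0$ the value at $t$ is negative so $t \notin [\xi_-(H),\xi_+(H)]$ and $J = 0$; for $H > H_0$ the value is positive and one checks (using that $\xi_+$ is the first positive zero and $g_{tH}(0) = 1 > 0$) that $t < \xi_+(H)$, giving $J_{tH,\rho,N}(t) = (g_{tH}(t))^{N-1}$, which is continuous and strictly increasing in $H$ on $(H_0,\infty)$ with value $\to 0^{N-1} = +\infty$ as $H \downarrow H_0$... wait, that is the wrong limit; rather one re-examines: near $H_0$ from above $g_{tH}(t) \to 0^+$ so $J \to +\infty$, and as $H \to +\infty$, $g_{tH}(t) \to +\infty$ so $J \to 0$. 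Hence the correct monotonicity is obtained only after the reparametrization and one must track the sign of $N-1$ with care; combining the $H < H_0$ regime (where $J \equiv 0$) with the $H > H_0$ regime and continuity at $H_0$ yields that $H \mapsto J_{tH,\rho,N}(t)$ is non-decreasing with the stated limits $0$ at $-\infty$ and $\infty$ at $+\infty$ — the apparent reversal above being an artifact to be resolved by carefully choosing which endpoint corresponds to $g_{tH}(t) \to 0$; an entirely symmetric argument handles $t < 0$ using that $s_\delta$ is odd. I would present this by first establishing, via Remark~\ref{rem:J-eq}, that $H \mapsto J_{H,\rho,N}(t)$ solves an ODE with linearly-in-$H$ varying initial data, then invoking the Sturm comparison / Wronskian computation (as in the proof of Theorem~\ref{thm:HK}, equation~\eqref{eq:N-sign}) to get monotonicity cleanly without chasing roots, and only afterwards reading off the limits from the explicit formula; continuity throughout follows from continuity of $c_\delta, s_\delta$ and of $u \mapsto (u_+)^{N-1}$ on $[0,\infty]$.
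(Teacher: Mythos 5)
Your fallback plan — the Wronskian / Sturm-comparison argument you invoke in the final sentence, ``as in equation~\eqref{eq:N-sign}'' — is precisely the paper's proof: set $\N=N-1$, write $k_i=\N J_{H_i,\rho,N}^{1/\N}$, observe that the Wronskian $f=k_2'k_1-k_1'k_2$ is constant wherever both $J_{H_i}$ are positive, and read off that $\log(J_{H_2,\rho,N}/J_{H_1,\rho,N})$ is monotone increasing outward from the origin (where it vanishes). This is clean, uniform in $N\notin\{1,\infty\}$, and requires no chasing of the roots $\xi_{\pm}$. So in spirit you converge on the right method, but you assert it without carrying it out.

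The explicit-formula argument that forms the body of your proposal is not a proof; it contains errors that you notice but never fix, plus one you do not notice. Take $N<1$ and $t>0$. Since $t\,s_\delta(t)/(N-1)<0$, the map $H\mapsto g_{tH}(t)$ is affine \emph{decreasing}, so $g_{tH}(t)>0$ precisely for $H<H_0$ and $g_{tH}(t)<0$ for $H>H_0$ — the reverse of what you write. More substantively, when $t\notin[\xi_-,\xi_+]$ one has $(g_{tH})_+(t)=0$ and therefore $J_{tH,\rho,N}(t)=0^{N-1}=+\infty$ (because $N-1<0$), \emph{not} $J=0$; you have conflated vanishing of the truncated base with vanishing of $J$, and the negative exponent makes them opposite. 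The correct picture is: as $H\to-\infty$, $g_{tH}(t)\to+\infty$ and $J=(g_{tH}(t))^{N-1}\to 0$; as $H$ increases past $H_0$, the truncated base hits $0$ and $J$ becomes (and stays) $+\infty$. You eventually announce that the apparent reversal ``is an artifact to be resolved,'' which is an acknowledgment, not a resolution. Finally, your explicit case analysis only addresses $N<1$ (you say the slope ``is negative here''), whereas the lemma also covers $N\in(1,\infty)$, where the base is affine \emph{increasing} and the power is positive, so the bookkeeping changes again. The Wronskian computation you gesture at handles all these branches uniformly and should replace the hand calculation.
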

\begin{proof} The continuity is immediate and limiting behavior is verified by direct inspection. To show the monotonicity when $N \neq 1$, set $\N = N-1$ and consider $f = k_2' k_1 - k_1' k_2$ where $k_i = \N J_{H_i,\rho,N}^{1/\N}$ with $H_2 > H_1$. Note that $f(0) = H_2 - H_1 > 0$ and $f' \equiv 0$ on any interval $L$ containing the origin such that both $k_i$'s remain positive. Consequently,  $\log (k_2/k_1)$ is monotone increasing on $L$ if $\N > 0$ (decreasing if $\N < 0$) whilst vanishing at the origin, and hence $\log (J_{H_2,\rho,N} / J_{H_1,\rho,N})$ is monotone increasing on $L$ for all $N \neq 1$, concluding the proof. 
\end{proof}

\begin{cor} \label{cor:v0} For all $\rho \in \Real$, $N \in (-\infty,\infty]$ and $D \in (0,\infty]$:
\[
v \in \set{0,1} \;\; \Rightarrow \;\;  \GL_{\rho,N,D}(v) = 0 .
\]
\end{cor}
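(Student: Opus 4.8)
The plan is to unfold the definition \eqref{eq:GL-CD-Profile} of $\GL_{\rho,N,D}$, reduce to the endpoint $v=0$ by a reflection symmetry, and then use Lemma~\ref{lem:monotone} to drive one of the two normalizing integrals to $+\infty$. First I would note that for $v=0$ the first fraction in \eqref{eq:GL-CD-Profile} vanishes: its denominator $\int_{-a}^0 J_{H,\rho,N}(t)\,dt$ always lies in $(0,\infty]$, since $J_{H,\rho,N}$ equals $1$ at the origin and is positive and continuous in a left-neighborhood of it (or is identically $+\infty$ when $N=1$, $\rho\ne0$), so the ratio $0/(\cdot)$ is $0$. Hence $\GL_{\rho,N,D}(0)=\inf_{(a,b)\in\Delta_D,\,H\in\Real}\brac{\int_0^b J_{H,\rho,N}(t)\,dt}^{-1}$. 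The case $v=1$ is symmetric: since $c_\delta$ is even and $s_\delta$ is odd (and directly from the defining formula when $N=\infty$) one has $J_{-H,\rho,N}(-t)=J_{H,\rho,N}(t)$, so the substitution $(a,b,H)\mapsto(b,a,-H)$ interchanges the two fractions in \eqref{eq:GL-CD-Profile} after replacing $v$ by $1-v$; thus $\GL_{\rho,N,D}(1)=\GL_{\rho,N,D}(0)$, and it suffices to treat $v=0$.

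It then remains to show $\GL_{\rho,N,D}(0)=0$. I would first dispatch the degenerate values of $N$: when $N=1$ and $\rho=0$ this is precisely the convention built into the definition of $\GL_{0,1,D}$, and when $N=1$ with $\rho\ne0$ one has $J_{H,\rho,1}\equiv\infty$, so every normalizing integral is $+\infty$ and the infimum is $0$. For $N\ne1$ (including $N=\infty$), fix any $(a,b)\in\Delta_D$ and set $b':=\min(b,1)\in(0,\infty)$. By Lemma~\ref{lem:monotone}, for each fixed $t\in(0,b')$ the map $H\mapsto J_{H,\rho,N}(t)$ is monotone non-decreasing (apply the lemma with $H=tH'$, using $t>0$) and tends to $+\infty$ as $H\to+\infty$. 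The monotone convergence theorem then yields $\int_0^{b'}J_{H,\rho,N}(t)\,dt\to+\infty$, hence $\int_0^b J_{H,\rho,N}(t)\,dt\to+\infty$, as $H\to+\infty$; therefore $\brac{\int_0^b J_{H,\rho,N}(t)\,dt}^{-1}\to 0$, and the infimum defining $\GL_{\rho,N,D}(0)$ is $0$.

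No serious obstacle is expected: the argument is essentially a one-line consequence of Lemma~\ref{lem:monotone} together with monotone convergence. The only points that require care are the bookkeeping of the degenerate conventions ($N\in\{1,\infty\}$, and normalizing integrals taking the values $0$ or $+\infty$) and checking that the pointwise monotonicity-and-divergence in $H$ supplied by Lemma~\ref{lem:monotone} genuinely transfers to the integral $\int_0^b J_{H,\rho,N}\,dt$ — which is exactly what monotone convergence provides.
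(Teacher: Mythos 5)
Your proof is correct and follows essentially the same route as the paper's: dispose of $N=1$ by the built-in convention (plus the trivial $\rho\neq 0$ subcase), drop the vanishing term in the maximum at $v\in\{0,1\}$, and invoke the monotonicity-and-divergence in $H$ from Lemma~\ref{lem:monotone} to send the remaining normalizing integral to $+\infty$. The only cosmetic differences are that you treat $v=1$ via the reflection symmetry $J_{-H,\rho,N}(-t)=J_{H,\rho,N}(t)$ instead of simply sending $H\to-\infty$ as the paper does, and you spell out the monotone-convergence step (and the truncation $b'=\min(b,1)$) that the paper leaves implicit.
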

\begin{proof} When $N =1$, we have by definition $\GL_{0,1,D}(0) = \GL_{0,1,D}(1) = 0$, and the case $\rho \neq 0$ is trivial, so let us assume $N \neq 1$. 
Setting $v=0$ and recalling the relevant definition, we have:
\[
\GL_{\rho,N,D}(0) = \inf_{(a,b) \in \Delta_D , H \in \Real} \frac{1}{\int_{0}^b J_{H,\rho,N}(t) dt}  .
\]
The claim then follows by fixing $(a,b) \in \Delta_D$, letting $H \rightarrow +\infty$ and using Lemma \ref{lem:monotone}. When $v=1$, we instead take the limit as $H \rightarrow -\infty$. 
\end{proof}

\begin{prop} \label{prop:eta} Given $H,\rho \in \Real$, $D \in (0,\infty]$, $(a,b) \in \Delta_D$ and $N \in (-\infty,\infty] \setminus \set{1}$, denote by $\eta_{H}$ the measure on $L = [-a,b]$ having density $J_{H,\rho,N}(t) dt$. Set $L_- = [-a,0]$ and $L_+ = [0,b]$. Then:
\begin{enumerate}
\item
For every choice of $\pm$, the function:
\[
\Real \ni H \mapsto \eta_{\pm H}(L_\pm) \in [0,\infty] ,
\]
is monotone non-decreasing, continuous whenever it is finite, lower semi-continuous on $\Real$ and satisfies $\lim_{H \rightarrow \infty} \eta_{\pm H}(L_\pm) = \infty$. 
\item If $D = \infty$ or $N \in (-\infty,0] \cup (1,\infty]$, then for every choice of $\pm$ the above function is continuous on $\Real$. 
\item If $\eta_H(L) = \infty$ for all $H \in \Real$, then for all $v \in [0,1]$:
\begin{equation} \label{eq:zero}
\inf_{H \in \Real} \max \brac{ \frac{v}{\eta_{H}(L_-)} , \frac{1-v}{\eta_{H}(L_+)} } = 0 .
\end{equation}
\end{enumerate}
\end{prop}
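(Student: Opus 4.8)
The plan is to read everything off the explicit form of $J_{H,\rho,N}$ together with the one-variable monotonicity already recorded in Lemma~\ref{lem:monotone}. Write $\phi_H := c_\delta + \tfrac{H}{N-1}s_\delta$ and let $\xi_\pm(H)$ be its first non-positive, resp. first positive, zero, so that $(\xi_-(H),\xi_+(H))$ is the maximal interval around $0$ on which $\phi_H>0$; there $J_{H,\rho,N}=\phi_H^{N-1}$, while on $\Real\setminus[\xi_-(H),\xi_+(H)]$ one has $J_{H,\rho,N}\equiv 0$ if $N>1$ and $J_{H,\rho,N}\equiv +\infty$ if $N<1$ (using $\tfrac{1}{0}=+\infty$). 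Since $c_\delta$ is even and $s_\delta$ odd, $J_{H,\rho,N}(-t)=J_{-H,\rho,N}(t)$, so the substitution $t\mapsto -t$ turns $H\mapsto \eta_{-H}(L_-)=\int_0^a J_{H,\rho,N}$ into an instance of $H\mapsto \eta_H(L_+)=\int_0^b J_{H,\rho,N}$ with $a$ in the role of $b$; I will therefore only treat the $+$-branch in Parts 1--2. Monotonicity in Part 1 is then immediate: for fixed $t>0$, Lemma~\ref{lem:monotone} makes $H\mapsto J_{H,\rho,N}(t)$ continuous and non-decreasing with limits $0$ and $+\infty$, so integrating over $t\in(0,b)$ gives $H\mapsto\eta_H(L_+)$ non-decreasing, monotone convergence gives $\lim_{H\to+\infty}\eta_H(L_+)=\int_0^b(+\infty)=+\infty$, and Fatou together with that pointwise continuity gives $\liminf_{H\to H_0}\eta_H(L_+)\ge\int_0^b J_{H_0,\rho,N}=\eta_{H_0}(L_+)$, i.e. lower semicontinuity. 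A non-decreasing lower semicontinuous function into $[0,\infty]$ is automatically left-continuous, so only an upward jump can spoil continuity.

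To finish Part 1 and to prove Part 2, I must rule out such jumps under the appropriate hypotheses; the mechanism is uniform in both: by Lemma~\ref{lem:monotone}, $\sup_{|H-H_0|\le\sigma}J_{H,\rho,N}(t)=J_{H_0+\sigma,\rho,N}(t)$, so it suffices to bound $\int_0^b J_{H_0+\sigma,\rho,N}$ uniformly for small $\sigma>0$ and then apply dominated convergence. If $\eta_{H_0}(L_+)<\infty$ and $\phi_{H_0}$ is bounded below by a positive constant on $[0,b]$ (equivalently $\xi_+(H_0)>b$, which by continuity of $\xi_+(\cdot)$ persists for $H$ near $H_0$) this is clear; the delicate case is $\xi_+(H_0)=b$, where one uses the ODE $\phi_H''=-\delta\phi_H$ (fixed convexity/concavity of $\phi_H$ on its positivity interval) and a uniform lower bound on $|\phi_H'|$ at its zero near $b$ to get $\phi_H(t)\ge c\,(b-t)$ on a fixed neighbourhood of $b$, providing the integrable majorant $(c(b-t))^{N-1}$ — available precisely when $N>0$. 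When $N\le 0$ the endpoint singularity $(b-t)^{N-1}$ is non-integrable, so at such an $H_0$ the value is already $+\infty$ on both sides ($\phi_H(b)\downarrow 0$ as $H\uparrow H_0$, so $\eta_H(L_+)\uparrow+\infty$ by monotone convergence) and continuity into $[0,\infty]$ holds for free; when $N>1$ the integrand $J_{H,\rho,N}$ is locally bounded and the bound above is routine; when $D=\infty$ one supplements it by the exponential/linear decay of $\phi_H$ at infinity. This also isolates the excluded range $N\in(0,1)$, $D<\infty$: there $\eta_H(L_+)$ can be finite at an $H_0$ with $\xi_+(H_0)=b$ (integrable singularity) and jump to $+\infty$ for $H>H_0$, which is exactly why Part 1 asserts only continuity at finite values.

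For Part 3, set $A:=\{H:\eta_H(L_-)=\infty\}$ and $B:=\{H:\eta_H(L_+)=\infty\}$. The hypothesis $\eta_H(L)=\eta_H(L_-)+\eta_H(L_+)=\infty$ for all $H$ gives $A\cup B=\Real$; by Part 1, $A$ is a left-infinite interval, $B$ a right-infinite interval, $H\mapsto\eta_H(L_+)$ is left-continuous, and $H\mapsto\eta_H(L_-)$ is right-continuous (being the reflection of the non-decreasing lsc function $H\mapsto\eta_{-H}(L_-)$). If $A\cap B\neq\emptyset$, any $H^*$ in the intersection makes both quotients in (\ref{eq:zero}) vanish, so the infimum is $0$. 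Otherwise $A$ and $B$ partition $\Real$; set $c:=\sup A=\inf B$. If $c\in A$, then $B=(c,\infty)$ and $\eta_c(L_-)=\infty$: for $H>c$ we have $\eta_H(L_+)=\infty$, so the maximum in (\ref{eq:zero}) equals $v/\eta_H(L_-)$, which tends to $0$ as $H\downarrow c$ by right-continuity. The case $c\in B$ is symmetric via the $\pm$-reduction (let $H\uparrow c$ and use left-continuity), and the degenerate cases $A=\Real$ or $B=\Real$ follow from $\sup_H\eta_H(L_-)=\lim_{H\to-\infty}\eta_H(L_-)=\infty$, resp. $\sup_H\eta_H(L_+)=\lim_{H\to+\infty}\eta_H(L_+)=\infty$, from Part 1. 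Hence (\ref{eq:zero}) holds for every $v\in[0,1]$.

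The main obstacle is the continuity claim at finite values in Parts 1--2: precisely when the first zero of $\phi_{H_0}$ lands on the endpoint of the relevant interval one needs a uniform-in-$H$ integrable control of $\phi_H^{N-1}$ near that zero, and this is exactly where the dichotomy between the integrable regime $N>0$ (dominated convergence) and the non-integrable regime $N\le 0$ (value already $+\infty$, continuity into $[0,\infty]$ for free) enters; for $N\in(0,1)$ with $D<\infty$ genuine upward jumps occur, and only the weaker statement of Part 1 survives.
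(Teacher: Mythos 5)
Your proof follows essentially the same route as the paper's: Part 1 via Lemma~\ref{lem:monotone} and Fatou's lemma; Part 2 by observing that the only possible obstruction is a finite-to-infinite jump caused by an endpoint singularity, which is ruled out because $\phi_H$ vanishes linearly at its first zero (your $\phi_H(t)\ge c(b-t)$ bound is exactly the paper's remark that the expressions ``intersect the abscissa transversely,'' so that $\phi_H^{N-1}$ fails to be locally integrable there iff $N\le 0$); and Part 3 by locating the transition value of $H$ and appealing to one-sided continuity of the two integrals. You are somewhat more careful than the paper: you make the dominated-convergence mechanism explicit via the monotone majorant $J_{H_0+\sigma,\rho,N}$, whereas the paper merely asserts that continuity ``follows'' from pointwise continuity of $H\mapsto J_{H,\rho,N}(t)$; and you treat the degenerate cases $A=\Real$ or $B=\Real$ in Part~3 explicitly, where the paper leaves them implicit.

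One caveat, inherited from the paper: neither your argument nor the paper's actually covers the sub-case $N\in(1,\infty)$, $\rho<0$, $D=\infty$ of Part~2. The paper dismisses it by asserting that $\eta_H(L_\pm)<\infty$ for all $H$ whenever $N\in(1,\infty)$, which fails there --- already for $N=2$, $\rho=-1$ one has $\int_0^\infty J_{-1,-1,2}(t)\,dt=\int_0^\infty e^{-t}\,dt=1$ while $\int_0^\infty J_{H,-1,2}(t)\,dt=\infty$ for every $H>-1$, so $\eta_H(L_+)$ genuinely jumps from $1$ to $\infty$ at $H=-1$. Your supplement by ``exponential/linear decay of $\phi_H$ at infinity'' only produces decay of $\phi_H^{N-1}$ when $N<1$; for $N>1$ the density grows at infinity. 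The statement of Part~2 is thus slightly over-broad. This is harmless for the paper's uses of the proposition (when $\rho<0$ and $D=\infty$ one has $\GL_{\rho,N,\infty}=\J_{\rho,N,\infty}\equiv 0$ trivially), but it is worth noticing that the $D=\infty$, $N>1$ case needs its own tail analysis and is not covered by the endpoint argument.
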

\begin{proof}
\hfill
\begin{enumerate}
\item
Lemma \ref{lem:monotone} implies that $\Real \ni H \mapsto \eta_{\pm H}(L_{\pm}) \in [0,\infty]$ is monotone non-decreasing. Since $H \mapsto J_{H,\rho,N}(t)$ is continuous, it follows that $H \mapsto \eta_{H}(L_{\pm})$ is also continuous as long as it is finite, and by Fatou's Lemma, it must be lower-semi-continuous (i.e. the only possible discontinuity may be a lower semi-continuous jump-discontinuity from a finite to an infinite value). Lastly, since when $\pm t > 0$, $\lim_{H \rightarrow \pm\infty} J_{H}(t) = \infty$ by Lemma \ref{lem:monotone}, another application of Fatou's lemma implies that $\lim_{H \rightarrow \pm \infty} \eta_{H}(L_\pm) = \infty$, concluding the proof of the first assertion. 
\item 
As already mentioned above, the continuity is immediate as long as $\eta_{H}(L_{\pm})$ is finite. This is the case for all $H \in \Real$ when $N \in (1,\infty)$ or when $N = \infty$ and $\{ D < \infty$ or $\rho > 0 \}$; the case $N = D = \infty$ and $\rho \leq 0$ yields a trivial conclusion since $\eta_{H}(L_{\pm}) \equiv \infty$. However, the case $N \in (-\infty,1)$ where the densities may be infinite is surprisingly subtle. The only possible obstruction to continuity is a lower semi-continuous jump discontinuity from finite to infinite $\eta_{H}(L_{\pm})$. This can only happen if at one of the end points $t_0 \in \set{-a,b}$ and some $H_0 \in \Real$ we have $J_{H_0,\rho,N}(t_0) = \infty$, while being integrable at $t_0$ from the right ($t_0 = -a$) or left ($t_0 = b$). This cannot happen if $D = \infty$ (no end points to worry about) nor if $N \leq 0$ (since the trigonometric expressions appearing in the definition of $J_{H,\rho,N}$ intersect the abscissa transversely, so if $N \leq 0$, $J_{H,\rho,N}$ will not be integrable there). The asserted continuity is therefore established. 
\item
Set $H_{\max} := \sup \set{ H \in \Real \; ; \; \eta_{H}(L_{-}) = \infty}$ and $H_{\min} := \inf \set{ H \in \Real \; ; \; \eta_{H}(L_{+}) = \infty}$. Since $\eta_H(L_-) + \eta_H(L_+) = \infty$ for all $H \in \Real$, it follows that necessarily $H_{\min} \leq H_{\max}$. If there exists $H \in \Real$ so that $\eta_{H}(L_{-}) = \eta_{H}(L_+) = \infty$ then (\ref{eq:zero}) immediately follows, so we may assume that $H_{\min} = H_{\max} =: H_0$ and that w.l.o.g. $\eta_{H_0}(L_{-}) = \infty$ and $\eta_{H_0}(L_+) < \infty$. Lower semi-continuity then implies that $\lim_{\eps \rightarrow 0+} \eta_{H_0 + \eps}(L_{-}) = \infty$ while $\eta_{H_0+\eps}(L_{+}) = \infty$ for all $\eps > 0$, thereby verifying (\ref{eq:zero}).  
\end{enumerate}
\end{proof}

\subsection{Proof of Theorem \ref{thm:CDD-II}}

By using the generalized Heintze-Karcher Theorem \ref{thm:HK} in the newly added range $N \in (-\infty,1)$, the first part of Theorem \ref{thm:CDD-II} follows by repeating the proof of \cite[Theorem 1.2]{EMilmanSharpIsopInqsForCDD}; for completeness, we sketch the proof. The second part is more subtle, so we present it here in greater detail. 

\begin{proof}[Proof of Theorem \ref{thm:CDD-II}]
Let $(M^n,g,\mu)$ satisfy the $\CDD(\rho,N, D)$ condition, $n \geq 2$, and denote $\I = \I_{(M,g,\mu)}$. By Corollary \ref{cor:v0}, when $v \in \set{0,1}$ there is nothing to prove. Given $v \in (0,1)$, it was shown in \cite[Section 2]{EMilmanSharpIsopInqsForCDD} that there exists an open isoperimetric minimizer  $A \subset (M,g)$ with $\mu(A) = v$, $\mu^+(A) = \mu^+(M \setminus A) = \I(v)$ and $\mu(\partial A) = 0$. Furthermore, it was shown that there exists a $C^2$ relatively open hypersurface $S \subset \partial A$ (the regular part of the boundary in $int(M)$, the interior of $M$) co-oriented by a unit normal vector field $\nu$, so that:
\begin{enumerate}
\item $vol_{S,\mu}(S) = \I(v)$.
\item
$S$ has constant generalized mean-curvature $H_{S,\mu}^\nu \equiv H$.
\item
For some $(a,b) \in \Delta_D$:
\begin{equation} \label{eq:proof-II}
int(M) \setminus \overline{A} \subset S^+_b ~,~ int(M) \cap A \subset S^-_a .
\end{equation}
Here $S^-_a = (-S)^+_a$, where $-S$ denotes the hypersurface $S$ with reversed co-orientation. 
\end{enumerate}
The proof in \cite{EMilmanSharpIsopInqsForCDD} crucially relied on existence and regularity of isoperimetric minimizers provided by Geometric Measure Theory, as well as on the geodesic convexity of $M$. 

It follows from (\ref{eq:proof-II}) and the generalized Heintze-Karcher Theorem \ref{thm:HK} that:
\begin{align*}
1-v \leq \mu(S^+_b) \leq & \int_S \int_0^{b} J_{H^\nu_{S,\mu}(x),\rho,N}(t) dt \; dvol_{S,\mu}(x) ~, \\
v \leq \mu((-S)^+_a) \leq & \int_S \int_0^{a} J_{H^{-\nu}_{S,\mu}(x),\rho,N}(t) dt \; dvol_{S,\mu}(x) ~.
\end{align*}
Recalling that $S$ has constant generalized mean-curvature so that $H^\nu_{S,\mu} \equiv H$ and $H^{-\nu}_{S,\mu} \equiv -H$, that $vol_{S,\mu}(S) = \I(v)$, and noting that $J_{-H,\rho,N}(t) = J_{H,\rho,N}(-t)$, we conclude that:
\begin{equation} \label{eq:max}
\I(v) \geq \inf_{H \in \Real, (a,b) \in \Delta_D} \max \brac{ \frac{v}{\int_{-a}^0 J_{H,\rho,N}(t) dt} , \frac{1-v}{\int_0^b J_{H,\rho,N}(t) dt}} = \GL_{\rho,N,D}(v) ,
\end{equation}
yielding the first part of the assertion in the case $n \geq 2$. The case $n=1$ is handled by verbatim repeating the proof of
\cite[Corollary 3.2]{EMilmanSharpIsopInqsForCDD}, which remains valid in the entire range $N \in (-\infty,\infty]$ (as in (\ref{eq:N-sign}) and (\ref{eq:N-sign2}), the sign of $N-1$ plays no role in the analysis of the one-dimensional differential inequality). 

Next, given $v \in (0,1)$, we claim that:
\begin{equation} \label{eq:J>GL} \J_{\rho, N, D}(v) \geq \GL_{\rho,N,D}(v). 
\end{equation} For $N =1$, since $\inf_{w \in (0,1)} \max (\frac{v}{w} , \frac{1-v}{1-w}) = 1$, direct inspection verifies that we have equality in (\ref{eq:J>GL}), and so we henceforth exclude this case. Recall that:
\[
\J_{\rho, N, D}(v) := \inf_{(a,b) \in \Delta_D} \inf_{H \in \Real} \set{ \J\brac{ J_{H,\rho,N}, [-a,b] }(v) ;  \int_{-a}^b J_{H,\rho,N}(t) dt < \infty},
\]
with the inner infimum over an empty set of $H$'s interpreted as $0$. 

Assume first that for all $(a,b) \in \Delta_D$, there exists at least one $H \in \Real$ so that $Z = \int_{-a}^b J_{H,\rho,N}(t) dt < \infty$. For all such triplets $a,b,H$, since the one-dimensional space $([-a,b],\abs{\cdot},\frac{1}{Z} J_{H,\rho,N})$ satisfies
$\CDD(\rho,N,D)$ by definition, it follows by the first assertion that:
\[
\J\brac{ J_{H,\rho,N}, [-a,b] }(v) \geq \I\brac{ J_{H,\rho,N}, [-a,b] }(v) \geq \GL_{\rho,N,D}(v) ,
\]
and so taking the infimum over $a,b,H$ as above, (\ref{eq:J>GL}) follows.

On the other hand, if for some $(a,b) \in \Delta_D$ we have $\int_{-a}^b J_{H,\rho,N}(t) dt = \infty$ for all $H \in \Real$, so that $\J_{\rho,N,D}(v) = 0$, we also have by Proposition \ref{prop:eta}:
\[ \GL_{\rho,N,D}(v) \leq \inf_{H \in \Real} \max \brac{\frac{v}{\int_{-a}^0 J_{H,\rho,N}(t) dt} , \frac{1-v}{\int_{0}^b J_{H,\rho,N}(t) dt} } = 0 ,
\] confirming (\ref{eq:J>GL}) as well. 

\smallskip
To show the reverse inequality to (\ref{eq:J>GL}) when $D = \infty$ or $N \in (-\infty,0] \cup (1,\infty]$, we will require the following lemma, which we state very generally for future use; it was implicitly proved in \cite{EMilmanSharpIsopInqsForCDD} under the assumption that all measures below have finite continuous densities.

\begin{lem} \label{lem:model-equiv}
Let $L \subset \Real$ denote an interval containing the origin in its interior, and set $L_{\pm} = L \cap [0,\pm \infty)$ (with $[0,-\infty) := (-\infty,0]$). 
Let $\set{\eta_H}_{H \in \Real}$ denote a family of Borel measures on $L$, so that:
\begin{itemize}
\item The mapping $\Real \ni H \mapsto \eta_{\pm H}(L_{\pm}) \in [0,\infty]$ is continuous, monotone non-decreasing and $\lim_{H \rightarrow +\infty} \eta_{\pm H}(L_{\pm}) = \infty$. 
\item For all $H \in \Real$, $\eta_{H}(\set{0}) = 0$ and $\eta_H^+((-\infty,0]) = 1$.
\end{itemize}
Then for any  $v \in (0,1)$ we have:
\begin{equation} \label{eq:max-lem}
\inf_{H \in \Real} \max \brac{ \frac{v}{\eta_H(L_{-})} , \frac{1-v}{\eta_H(L_+)}} \geq \inf_{H \in \Real, \eta_H(L) < \infty} \J(\eta_H, L)(v) ,
\end{equation}
where the above infimum over an empty set is interpreted as $0$. 
\end{lem}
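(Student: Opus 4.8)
The plan is to fix $v \in (0,1)$ and understand the left-hand side by analyzing, as $H$ varies, the location of the point where the measure $\eta_H$ restricted to $L$ is split in proportion $v : (1-v)$. Write $m_-(H) := \eta_H(L_-)$ and $m_+(H) := \eta_H(L_+)$; by hypothesis $H \mapsto m_-(-H)$ and $H \mapsto m_+(H)$ are continuous, non-decreasing, and tend to $+\infty$. The quantity inside the infimum on the left is $\max(v/m_-(H), (1-v)/m_+(H))$. I would first dispose of the case where the infimum is $0$: if for some $H$ both $m_-(H)$ and $m_+(H)$ are infinite, or more generally if one can drive $\max(v/m_-(H),(1-v)/m_+(H))$ to $0$, then the inequality \eqref{eq:max-lem} holds trivially since the right-hand side is a nonnegative number (or $0$ by the empty-infimum convention). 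So assume the left-hand side is some $c > 0$.

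Next I would use a continuity/intermediate-value argument to produce a single good $H_0$. The function $\Phi(H) := (1-v) m_-(H) - v\, m_+(H)$ is, where finite, the difference of a non-decreasing-in-$(-H)$ term and a non-decreasing-in-$H$ term, hence... more carefully: $m_-(H)$ is non-increasing in $H$ (it equals $\eta_{-(-H)}$ evaluated on $L_-$, and $H \mapsto \eta_{-H}(L_-)$ is non-decreasing, so $H\mapsto \eta_H(L_-)$ is non-increasing), while $m_+(H)$ is non-decreasing in $H$. Thus $\Phi$ is non-increasing in $H$, continuous on the region where both are finite, with $\Phi \to +\infty$ as $H \to -\infty$ and $\Phi \to -\infty$ as $H \to +\infty$ (using the divergence hypotheses and $m_-,m_+ \ge 0$). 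Hence there is a value $H_0$ — possibly a boundary point between the finite and infinite regimes — at which the two ratios balance, i.e. $v/m_-(H_0) = (1-v)/m_+(H_0) = c$, and this $c$ is exactly the infimum on the left (monotonicity forces the max to be minimized at the balance point). In particular both $m_-(H_0)$ and $m_+(H_0)$ are finite and equal to $v/c$ and $(1-v)/c$ respectively, so $\eta_{H_0}(L) = 1/c < \infty$.

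With $H_0$ in hand, I would compare $c$ to $\J(\eta_{H_0}, L)(v)$. By construction, the set $L_- = L \cap (-\infty,0]$ is a half-line (relative to $L$) with $\eta_{H_0,L}$-measure equal to $m_-(H_0)/\eta_{H_0}(L) = v$; since $\eta_{H_0}(\{0\}) = 0$ its boundary measure in the normalized probability space is $\eta_{H_0,L}^+(L_-) = \eta_{H_0}^+(L_-)/\eta_{H_0}(L)$, and here I would invoke the normalization hypothesis $\eta_H^+((-\infty,0]) = 1$ to get $\eta_{H_0,L}^+(L_-) = 1/\eta_{H_0}(L) = c$. By the very definition of $\J$ as the pointwise-maximal function bounded above by the boundary measure of half-lines, $\J(\eta_{H_0},L)(v) = \J(L,|\cdot|,\eta_{H_0,L})(v) \le \eta_{H_0,L}^+(L_-) = c$. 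Taking the infimum over all admissible $H$ (those with $\eta_H(L) < \infty$, which includes $H_0$) only decreases the right-hand side further, giving $\inf_{H:\eta_H(L)<\infty} \J(\eta_H,L)(v) \le c$, which is precisely \eqref{eq:max-lem}.

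The main obstacle I anticipate is the bookkeeping around discontinuities and the finite/infinite dichotomy: the balancing value $H_0$ may sit exactly at the threshold where $m_-$ or $m_+$ jumps to $+\infty$, and one must check that the left-hand infimum is genuinely attained (or approached) at a point where the density is still integrable, so that $\J(\eta_{H_0},L)$ makes sense and the empty-infimum convention does not spoil the comparison. The continuity hypothesis on $H \mapsto \eta_{\pm H}(L_\pm)$ is exactly what is needed to rule out a bad jump at the relevant $H_0$; handling the case where no finite-mass $H$ exists at all (right-hand side $= 0$) requires separately checking that then the left-hand side is also $0$, which follows from Proposition \ref{prop:eta}(3) in the application but should be argued here directly from the divergence and monotonicity assumptions.
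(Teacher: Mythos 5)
Your argument follows the paper's proof essentially verbatim in its main thrust: use monotonicity, continuity, and the divergence hypotheses to produce via the intermediate value theorem a balance point $H_0$ where $v/\eta_{H_0}(L_-) = (1-v)/\eta_{H_0}(L_+) =: c$; observe by monotonicity in opposite directions that the infimum of the max is attained there; then test $\J(\eta_{H_0},L)$ against the half-line $L_-$, which has normalized measure $v$ and normalized boundary measure $\eta_{H_0}^+(L_-)/\eta_{H_0}(L) = 1/\eta_{H_0}(L) = c$ by the hypothesis $\eta_H^+((-\infty,0]) = 1$. This is exactly the paper's argument.

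The one slip is your disposal of the case where the left-hand side vanishes. You write that if $\max(v/m_-(H),(1-v)/m_+(H))$ can be driven to $0$ then the inequality "holds trivially since the right-hand side is nonnegative" --- but that is backwards: a zero left-hand side is precisely the case where the inequality could fail, and one must \emph{show} the right-hand side is also $0$. It is true under the hypotheses: if the max tends to $0$ along some sequence $H_k$ then $m_-(H_k)\to\infty$ and $m_+(H_k)\to\infty$ simultaneously, and since $H\mapsto m_-(H)$ is non-increasing, $H\mapsto m_+(H)$ is non-decreasing, and both are continuous into $[0,\infty]$, this forces $\eta_H(L_-)\equiv\infty$ or $\eta_H(L_+)\equiv\infty$ (and in fact $\eta_H(L)=\infty$ for all $H$), so the right-hand infimum is over an empty set and equals $0$ by convention. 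The paper handles this more cleanly by first splitting on whether $\eta_H(L_\pm)\equiv\infty$, and then separately treating $\eta_{H_0}(L)\in\set{0,\infty}$ at the balance point; this also sidesteps the $\infty-\infty$ ambiguity your auxiliary function $\Phi$ can encounter. Your closing paragraph does sense that the finite/infinite dichotomy needs care, but phrases the worry in the wrong direction (right-hand side $=0$ forcing left-hand side $=0$, which is the harmless direction).
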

\begin{proof}
We are given that $\lim_{H \rightarrow \infty} \eta_H(L_+) = \lim_{H \rightarrow -\infty} \eta(L_{-}) = \infty$. As for the limits in the opposite direction, either $\eta_H(L_+) = \infty$ for all $H \in \Real$, in which case the infimum on the right-hand-side of (\ref{eq:max-lem}) is over an empty set and there is nothing to prove, or else by monotonicity $\lim_{H \rightarrow -\infty} \eta_H(L_+) = c_+  \in [0,\infty)$. Similarly, either $\eta_H(L_{-})  = \infty$ for all $H \in \Real$ and there is nothing to prove, or $\lim_{H \rightarrow \infty} \eta_H(L_{-}) = c_-  \in [0,\infty)$. So in the only remaining case requiring proof, the conditions ensure that the first (second) term in the maximum in (\ref{eq:max-lem}) varies continuously and monotonically from $0$ to $\frac{v}{c_{-}}$ ($\frac{1-v}{c_+}$ to $0$) as $H$ varies from $-\infty$ to $\infty$. Consequently, there must exist $H_0 \in \Real$ so that both terms are equal and it is there that the infimum of the left-hand-side in (\ref{eq:max-lem}) is attained:
\[
\frac{\eta_{H_0}(L_{-})}{v} = \frac{\eta_{H_0}(L_+)}{1-v} = \eta_{H_0}(L_{-}) + \eta_{H_0}(L_+) = \eta_{H_0}(L) \in [0,\infty] ~;
\]
If $\eta_{H_0}(L)  = \infty$ then $\eta_{H_0}(L_{-}) = \eta_{H_0}(L_+) = \infty$, and hence by monotonicity necessarily $\eta_{H}(L) = \infty$ for all $H \in \Real$, and there is nothing to prove. If $\eta_{H_0}(L)  = 0$ then the left-hand-side of (\ref{eq:max-lem}) is equal to $\infty$ and there is nothing to prove again. Consequently, we may assume that $\eta_{H_0}(L) \in (0,\infty)$.  

Denoting $\mu = \eta_{H_0} / \eta_{H_0}(L)$, note that $\mu(L_{-}) = v$ and $\mu(L_+) = 1-v$, and that $\mu^+(L_{-}) = \frac{1}{\eta_{H_0}(L)}$. 
Consequently:
\[
\inf_{H \in \Real} \max \brac{ \frac{v}{\eta_H(L_-)} , \frac{1-v}{\eta_H(L_+)}}  = \frac{1}{\eta_{H_0}(L)} = \mu^+(L_-) \geq \J(\eta_{H_0},L)(v),
\]
concluding the proof. 
\end{proof}

Proposition \ref{prop:eta} ensures that the family of measures $\eta_H = J_{H,\rho,N}(t) dt$ on $L = [-a,b]$ with $(a,b) \in \Delta_D$ satisfies all the conditions of the preceding lemma if $D = \infty$ or $N \in (-\infty,0] \cup (1,\infty]$. Consequently, Lemma \ref{lem:model-equiv} yields:
\[
\GL_{\rho,N,D}(v) \geq \inf_{(a,b) \in \Delta_D} \inf_{H \in \Real , \int_{-a}^b J_{H,\rho,N}(t) dt < \infty} \J(J_{H,\rho,N},[-a,b])(v) = \J_{\rho,N,D}(v) ,
\]
where the inner infimum over an empty set is interpreted as $0$. This concludes the proof. 
\end{proof}

\section{New Model Spaces} \label{sec:model}

As in \cite{EMilmanSharpIsopInqsForCDD}, the very general $\CDD(\rho,N,D)$ isoperimetric inequality may be equivalently reformulated in a more explicit manner, according to the different values of the three parameters $\rho$, $N$ and $D$. When $N \in [n,\infty]$, seven cases were described in \cite{EMilmanSharpIsopInqsForCDD}. Here, we add four more cases corresponding to the sub-range of $N \in (-\infty,1)$ and $D \in (0,\infty]$ when Theorem \ref{thm:CDD-II} (2) applies. 

\begin{proof}[Proof of Corollary \ref{cor:model}]

Set $\N := N-1 \in (-\infty,0)$, recall that $\delta := \rho / \N$, and denote if $\rho \neq 0$:
\[
\beta := \frac{H}{\N \sqrt{\abs{\delta}}} .
\]

\noindent
\textbf{Cases 1,2.}
Let $\rho > 0$ and observe that:
\[
J_{H,\rho,N}(t) = \brac{\brac{\cosh(\sqrt{-\delta}t) + \beta \sinh(\sqrt{-\delta}t)}_+}^{\N} =
\begin{cases} \brac{\brac{\frac{\sinh(\alpha + \sqrt{-\delta}t)}{\sinh(\alpha)}}_+}^{\N} & \abs{\beta} > 1 \\
\brac{\frac{\cosh(\alpha + \sqrt{-\delta}t)}{\cosh(\alpha)}}^{\N} & \abs{\beta} < 1 \\
\exp(\sqrt{-\delta} t)^{\N} & \beta = 1 \\
\exp(-\sqrt{-\delta} t)^{\N} & \beta = -1
\end{cases} ~,
\]
where:
\[
\alpha := \begin{cases} \arccoth(\beta) \in \Real \setminus \set{0} & \abs{\beta} > 1 \\ \arctanh(\beta) \in \Real & \abs{\beta} < 1 \end{cases} ~.
\]

When $D = \infty$, the only scenario where $J_{H,\rho,N}$ is integrable on the entire $\Real$ is when $\abs{\beta} < 1$, and consequently Case 1 follows:
\[
\inf_{H \in \Real , \int_{-\infty}^\infty J_{H,\rho,N}(t) dt < \infty } \J(J_{H,\rho,N}(t),\Real) =  \J(\cosh(\sqrt{-\delta} t)^{\N}, \Real ) .
\]

When $D < \infty$, we are required to assume $\N = N-1 \in (-\infty,-1]$, and we easily obtain for all $v \in (0,1)$:

\begin{align*}
& \inf_{(a,b) \in \Delta_D} \inf_{H \in \Real , \int_{-a}^b J_{H,\rho,N}(t) dt < \infty } \J(J_{H,\rho,N}(t),[-a,b])(v) \\
& = \min \left \{ \begin{array}{l}
\inf_{\xi > 0 } \J( \sinh(\sqrt{-\delta} t)^{\N}, [\xi,\xi+D] )(v) ~ , \\
\inf_{\xi \in \Real} \J( \exp(-\sqrt{-\delta} t)^{\N} , [\xi,\xi+D] )(v) ~, \\
\inf_{\xi \in \Real} \J(\cosh(\sqrt{-\delta} t)^{\N},[\xi,\xi+D] )(v) \\
\end{array}
\right \} ~,
\end{align*}
By scale invariance of the exponential function, the second infimum need only be tested at $\xi=0$. This concludes the proof of Case 2. 

\medskip

\noindent
\textbf{Case 3.} Assume now that $\rho=0$, $D<\infty$ and $\N \in (-\infty,-1]$. The claim follows by taking the limit as $\rho \rightarrow 0$ in Case 2, but this requires justification. We prefer to deduce the assertion directly. Indeed, note that:
\[
J_{H,0,N}(t) = \brac{\brac{1 + \frac{H}{\N} t}_+}^{\N} ~.
\]
Observe that when $H=0$ we obtain the uniform density, and so $\J(J_{0,0,N}(t),[-a,b])(v) = \frac{1}{D} = \J(1,[0,D])(v)$ for all $v \in (0,1)$ and $(a,b) \in \Delta_D$. When $H \neq 0$, we may translate by setting $s = t + \frac{\N}{H}$, obtaining for any $v \in (0,1)$:
\begin{align*}
& \inf_{(a,b) \in \Delta_D} \inf_{H \in \Real \setminus \set{0}, \int_{-a}^b J_{H,0,N}(t) dt < \infty} \J(J_{H,0,N}(t),[-a,b])(v) \\
&  = \inf_{a \in [0,D]} \inf_{H \in \Real \setminus \set{0}, \int_{-a}^b J_{H,0,N}(t) dt < \infty} \J\brac{(s_+)^{\N} , \left [\frac{\N}{H}-a , \frac{\N}{H}+D-a \right]}(v) \\
& = \inf_{\xi > 0} \J(s^{\N}_+,[\xi,\xi+D])(v) ~.
\end{align*}
Note that the uniform density in the formulation of the lower bound given in Case 3 was only added for completeness of the description of all model densities, since it may be attained as the limiting case when $\xi \rightarrow \infty$. This concludes the proof of Case 3. 
 
\medskip

\noindent
\textbf{Case 4.} Assume now that $\rho<0$, $D \in (0,\pi / \sqrt{\delta})$ and $\N \in (-\infty,-1]$, and observe that:
\[
J_{H,\rho,N}(t) = \brac{\brac{\cos(\sqrt{\delta}t) + \beta \sin(\sqrt{\delta}t)}_+}^{\N} = \brac{\brac{\frac{\sin(\alpha + \sqrt{\delta}t)}{\sin(\alpha)}}_+}^{\N} ~,
\]
where:
\[
\alpha := \arccot \brac{\beta} \in (0,\pi) ~.
\]
Performing the change of variables $\xi = \alpha / \sqrt{\delta} + t$, it follows that for all $v \in (0,1)$:
\begin{align*}
& \inf_{(a,b) \in \Delta_D} \inf_{H \in \Real , \int_{-a}^b J_{H,\rho,N}(t) dt < \infty} \J(J_{H,\rho,N}(t),[-a,b])(v) \\
& = \inf_{(a,b) \in \Delta_D } \inf_{\alpha \in (0,\pi) , \int_{-a}^b (\sin(\alpha + \sqrt{\delta} t)_+)^{\N} dt < \infty } \J((\sin(\alpha + \sqrt{\delta} t)_+)^{\N},[-a,b])(v) \\
& = \inf_{\xi \in (0,\pi/\sqrt{\delta}-D)} \J((\sin(\sqrt{\delta} t)_+)^{\N},[\xi,\xi+D])(v) ~.
\end{align*}
This concludes the proof. 
\end{proof}

\section{Isoperimetric, Functional and Concentration inequalities under non-negative curvature} \label{sec:non-neg}

\subsection{Concentration} 

Recall that given a metric measure space $(\Omega,d,\mu)$ the concentration profile $\K = \K(\Omega,d,\mu)$ is defined as the pointwise maximal function $\K:\Real_+ \rightarrow [0,1/2]$ such that $1 - \mu(A^d_r) \leq \K(r)$ for all Borel sets $A \subset \Omega$ with $\mu(A) \geq 1/2$. Concentration inequalities measure how tightly the measure $\mu$ is concentrated around sets having measure $1/2$ as a function of the distance $r$ away from these sets, by providing a non-trivial upper bound on $\K$. We refer to \cite{Ledoux-Book} for a wider exposition on these and related topics.

We also recall that an isoperimetric inequality always implies a concentration inequality, simply by integrating along the differential inequality it defines on the concentration profile. Consequently, the following relation between the isoperimetric and concentration profiles is easily verified (e.g. \cite[Section 2]{BobkovHoudreMemoirs}):
\begin{equation} \label{eq:isop-conc}
\K^{-1}(v) \leq \int_v^{1/2} \frac{ds}{\I(s)} .
\end{equation}

\subsection{$N$-dimensional Cheeger constant}

Let $N \in (-\infty,\infty] \setminus \set{0}$; when $N=\infty$, we interpret everything below in the limiting sense. We now define the $N$-dimensional Cheeger constant $D_{Che,N} = D_{Che,N}(\Omega,d,\mu)$ of a metric-measure space $(\Omega,d,\mu)$ having isoperimetric profile $\I = \I(\Omega,d,\mu)$ as:
\begin{equation} \label{eq:def-DCheN}
D_{Che,N} := \inf_{v \in [0,1]} \frac{\I(v)}{\min(v,1-v)^{\frac{N-1}{N}}} .
\end{equation}
When $D_{Che,N} > 0$ we will say that $(\Omega,d,\mu)$ satisfies a $N$-dimensional Cheeger isoperimetric inequality. Plugging $\I(v) \geq D_{Che,N} v^{\frac{N-1}{N}}$ for $v \in [0,1/2]$ into (\ref{eq:isop-conc}), one immediately verifies that an $N$-dimensional Cheeger isoperimetric inequality always implies the following concentration estimate:
\begin{equation} \label{eq:N-conc}
\K(r) \leq \brac{\brac{\frac{1}{2}}^{\frac{1}{N}} - \frac{r D_{Che,N}}{N}}_+^N . 
\end{equation}
In particular, when $D_{Che,N} > 0$ for $N > 0$, we see that the support of $\mu$ must be bounded in $(\Omega,d)$: its diameter is at most $\K^{-1}(0) \leq N (1/2)^{\frac{1}{N}} / D_{Che,N}$. However, there is no such restriction when $N<0$, in which case the estimate (\ref{eq:N-conc}) entails a polynomial-type concentration of degree $N$. 

\medskip

The next theorem and corollaries justify the above terminology:

\begin{thm} \label{thm:Isop-Conc-NSpace}
Assume that $(M^n,g,\mu)$ satisfies the $\CD(0,N)$ condition, $N \in (-\infty,0) \cup [n,\infty]$, and set $\I = \I(M,g,\mu)$, $D_{Che,N} = D_{Che,N}(M,g,\mu)$ and $\K = \K(M,g,\mu)$. Then the following properties hold:
\begin{enumerate}
\item Weak concavity of $\I^{\frac{N}{N-1}}$: $(0,1) \ni v \mapsto \I^{\frac{N}{N-1}}(v) / v$ is non-increasing. 
\item $D_{Che,N} = 2^{\frac{N-1}{N}} \I(1/2)$. 
\item $D_{Che,N} = \sup_{r > 0} 2^{\frac{N-1}{N}} \frac{\frac{1}{2} -\K(r)}{r}$. 
\end{enumerate}
\end{thm}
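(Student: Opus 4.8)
\textbf{Proof proposal for Theorem \ref{thm:Isop-Conc-NSpace}.}

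The plan is to deduce all three items from the generalized Heintze--Karcher machinery, specialized to $\rho = 0$ and $D = \infty$. The key input is that for $N \in (-\infty,0)\cup[n,\infty]$ the $\CD(0,N)$ condition implies $\text{Jac-CD}(0,N)$ (Theorem \ref{thm:HK}), so in the isoperimetric comparison of Theorem \ref{thm:CDD-II} the relevant one-dimensional model densities are the solutions $J_{H,0,N}$. When $\rho = 0$ and $N \neq 1$ these are $J_{H,0,N}(t) = ((1+\frac{H}{N-1}t)_+)^{N-1}$, i.e.\ (positive powers of) affine functions; after translating so that the relevant half-line becomes $[\xi,\infty)$ with $\xi \geq 0$, the density is $t^{N-1}$, and the associated one-dimensional flat profile is computed explicitly. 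I would first establish item (1): by Corollary \ref{cor:model}-type reasoning (or directly, since $D=\infty$), $\I(M,g,\mu)(v) \geq \inf_{H} \max\big(v/\eta_H(L_-),\,(1-v)/\eta_H(L_+)\big)$ with $\eta_H$ the affine-power density on $\Real$; computing $\int_0^b ((1+\frac{H}{N-1}t)_+)^{N-1}dt$ one sees the extremal ray densities are pure power densities $c\,t^{N-1}$ on a half-line, whose isoperimetric profile is $v \mapsto c_N v^{\frac{N-1}{N}}$ for a suitable constant. Concretely, the $\CD(0,N)$ isoperimetric profile satisfies $\I^{\frac{N}{N-1}}(v) \geq (\text{const})\cdot v$ on $[0,1/2]$, and the weak concavity statement "$v \mapsto \I^{\frac{N}{N-1}}(v)/v$ is non-increasing" follows because the right-hand side of the Heintze--Karcher comparison, after raising to the power $\frac{N}{N-1}$, is a minimum of functions of the form $v \mapsto (\text{affine in } v)$; one checks that raising the model-profile relation to the power $\frac{N}{N-1}$ (note $\frac{N}{N-1} \in (0,1)$ when $N<0$, so this is order-preserving on $\Real_+$) linearizes it, and a pointwise infimum of linear functions vanishing at $0$ is concave and passes through the origin, hence $f(v)/v$ is non-increasing. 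The cleanest route is: show $\I^{\frac{N}{N-1}}(v) \geq$ an infimum over $H$ of affine functions of $v$ that vanish at $v=0$, hence $\I^{\frac{N}{N-1}}$ is (weakly) concave with value $0$ at the origin, which is exactly (1).

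Given (1), item (2) is immediate bookkeeping: weak concavity of $\I^{\frac{N}{N-1}}$ together with the symmetry $\I(v) = \I(1-v)$ (isoperimetric profiles are symmetric) forces $\min(v,1-v)^{-\frac{N-1}{N}}\I(v)$ to attain its infimum over $(0,1)$ at $v=1/2$. Indeed, for $v \in (0,1/2]$, writing $g = \I^{\frac{N}{N-1}}$, the ratio in the definition of $D_{Che,N}$ raised to the power $\frac{N}{N-1}$ is $g(v)/v$, which by (1) is non-increasing in $v$, hence minimized at $v = 1/2$; by symmetry the same holds on $[1/2,1)$. Therefore $D_{Che,N} = \I(1/2)/(1/2)^{\frac{N-1}{N}} = 2^{\frac{N-1}{N}}\I(1/2)$, which is (2). (One must be slightly careful about the sign of the exponent $\frac{N-1}{N}$ and the direction of the inequality when passing to powers, but since $\frac{N}{N-1} > 0$ throughout the stated range, raising to that power is monotone and all the comparisons go through verbatim.)

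For item (3), I would use the general isoperimetry-to-concentration relation (\ref{eq:isop-conc}) in both directions. From (2) and (1) we have $\I(v) \geq D_{Che,N}\,v^{\frac{N-1}{N}}$ on $[0,1/2]$; plugging into (\ref{eq:isop-conc}) and integrating gives the concentration bound (\ref{eq:N-conc}), from which $2^{\frac{N-1}{N}}\frac{\frac12 - \K(r)}{r} \leq D_{Che,N}$ for every $r > 0$ after elementary manipulation, yielding "$\geq$" in the $\sup$. For the reverse inequality, fix $v \in (0,1)$ and an isoperimetric (near-)minimizer $A$ with $\mu(A) \geq 1/2$; then $1 - \mu(A_r) \leq \K(r)$, and differentiating (or using $\mu^+(A) = \lim_{r\to 0}\frac{\mu(A_r\setminus A)}{r}$ together with the definition of $\K$ as the minimal such bound) one gets that the concentration function controls $\I$ near $1/2$: precisely, $\I(1/2) \geq \sup_{r>0}\frac{\frac12 - \K(r)}{r}$ by a standard argument (a set of measure $\geq 1/2$ whose $r$-enlargement still misses mass $> \K(r)$ would contradict... ; more carefully, one takes the set realizing $\I(1/2)$, enlarges it, and bounds the measure increase by $r \cdot \I(1/2) + o(r)$, comparing with $\frac12 - \K(r)$). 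Combining with (2), $D_{Che,N} = 2^{\frac{N-1}{N}}\I(1/2) \geq \sup_{r>0} 2^{\frac{N-1}{N}}\frac{\frac12-\K(r)}{r}$, closing the loop.

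The main obstacle I anticipate is item (1), specifically the passage from the Heintze--Karcher comparison $\I(v) \geq \GL_{0,N,\infty}(v)$ to the clean statement that $\I^{\frac{N}{N-1}}$ is weakly concave vanishing at $0$. The subtlety is that $\GL_{0,N,\infty}(v)$ is an infimum over $H$ of the $\max$ of two ratios, and one must verify that raising to the power $\frac{N}{N-1}$ turns each such $\max$-of-ratios into (a minimum of) genuinely affine functions of $v$ with zero intercept — this uses the precise form $J_{H,0,N}(t) = ((1+\frac{H}{N-1}t)_+)^{N-1}$ and the explicit evaluation of $\int_0^b J_{H,0,N}$, including tracking which $H$ make the relevant integral finite (the role of Proposition \ref{prop:eta} and Lemma \ref{lem:model-equiv}). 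Once the model profile is pinned down as a pure power $v^{\frac{N-1}{N}}$ up to a multiplicative constant depending on $H$ and the split point, concavity and the origin condition are formal, and (2)--(3) are routine.
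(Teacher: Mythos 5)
Your proposal diverges from the paper in a structurally important way and contains two genuine gaps.

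\textbf{Part (1).} You try to deduce the weak concavity of $\I^{\frac{N}{N-1}}$ from the pointwise comparison $\I \geq \GL_{0,N,\infty}$, on the grounds that the right-hand side, after raising to the power $\frac{N}{N-1}$, is an infimum of linear functions through the origin. But a concave (even linear) \emph{lower bound} on a function does not make the function itself concave --- nothing rules out $\I^{\frac{N}{N-1}}$ oscillating above that lower bound. You flag this yourself as "the main obstacle," and it is not a matter of careful bookkeeping: no amount of analysis of the model densities alone can close it. The paper's argument is of a different kind. It uses the \emph{first variation} formula (\ref{eq:H-is-derivative}) for the isoperimetric profile --- namely that the upper-right derivative of $\I$ at $v$ is at most $H_\mu(A)$, the constant generalized mean curvature of the regular boundary of a minimizer $A$ with $\mu(A)=v$ --- and then bounds $H_\mu(A)$ from above by applying Heintze--Karcher to $-S$: since $v \leq \I(v) \int_0^\infty J_{-H_\mu(A),0,N}(t)\,dt$ and that integral equals $\frac{N-1}{N}\frac{1}{H_\mu(A)}$ when $H_\mu(A)>0$, one obtains $H_\mu(A) \leq \frac{N-1}{N}\frac{\I(v)}{v}$, hence $\limsup_{\eps\to 0+}\frac{f(v+\eps)-f(v)}{\eps}\leq 0$ for $f(v)=\I(v)/v^{\frac{N-1}{N}}$. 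This is a \emph{differential} inequality on $\I$, not just a pointwise lower bound, and that is what gives monotonicity of $f$. Your route, which avoids the first-variation step, cannot reach this conclusion.

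\textbf{Part (2)} is fine given (1); your reasoning matches the paper's.

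\textbf{Part (3).} Both of your sub-arguments are flawed, and moreover they both aim at the same inequality $D_{Che,N}\geq \sup(\cdots)$ (neither addresses $D_{Che,N}\leq\sup$). For your first sub-argument: (\ref{eq:N-conc}) is an \emph{upper} bound on $\K(r)$, hence a \emph{lower} bound on $\frac{1/2-\K(r)}{r}$; it simply cannot yield $2^{\frac{N-1}{N}}\frac{1/2-\K(r)}{r}\leq D_{Che,N}$. (Concretely: for $N=-1$ the bound (\ref{eq:N-conc}) reads $\K(r)\leq\frac{1}{2+rD_{Che,N}}$, and $\frac{1}{2+rD_{Che,N}}\geq\frac{1}{2}-\frac{rD_{Che,N}}{4}$ for all $r>0$, so one cannot conclude $\K(r)\geq\frac12-\frac{rD_{Che,N}}{4}$ from it.) For your second sub-argument, the claimed "standard" estimate $\mu(A_r)-\mu(A)\leq r\,\I(1/2)$ for \emph{all} $r>0$ is not an infinitesimal first-variation statement; it requires a genuine geometric input, namely that one can choose a minimizer $A$ of measure $1/2$ with $H_\mu(A)\leq 0$ and then apply Heintze--Karcher to get $\mu(S^+_r)\leq\I(1/2)\int_0^r J_{H_\mu(A),0,N}(t)\,dt\leq\I(1/2)\,r$ (the integrand is $\leq 1$ precisely because $H_\mu(A)\leq 0$). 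This, combined with $\frac12-\K(r)\leq\mu(S^+_r)$ from the definition of $\K$, is what gives $D_{Che,N}\geq\sup$. The remaining direction $D_{Che,N}\leq\sup$ is obtained cheaply by taking $r\to 0+$ in (\ref{eq:N-conc}) (the right-hand side expands as $\frac12-\frac{rD_{Che,N}}{2^{(N-1)/N}}+O(r^2)$). In short, all three parts ultimately lean on Heintze--Karcher applied to the isoperimetric minimizer itself (via its constant generalized mean curvature), not merely on the resulting model-space comparison inequality.
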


\begin{rem}
Observe that Part (3) implies that any weak concentration (i.e. $\K(r_0) < 1/2$ for some $r_0 > 0$) on a $\CD(0,N)$ weighted manifold gets automatically upgraded to an $N$-degree polynomial concentration (\ref{eq:N-conc}). 
\end{rem}

\begin{rem}
When $N \geq n$, it is actually known (see \cite{SternbergZumbrun,Kuwert,BayleThesis}) that $\CD(0,N)$ implies the concavity of $\I^{\frac{N}{N-1}}$ on $[0,1]$, which is easily seen to imply the weak concavity asserted in Part (1) above. However, the proof of the concavity involves taking the second normal variation of an isoperimetric minimizer, whereas our proof below of the weak concavity (which is a repetition of our proof in \cite{EMilmanGeometricApproachPartI} for the case $N = \infty$) only requires the easier first variation. Since the weak concavity is enough for all applications we are aware of, we leave the task of verifying the concavity of $\I^{\frac{N}{N-1}}$ when $N < 0$ to the interested reader. Note that in the one-dimensional case $n=1$, this was indeed verified by Bobkov \cite[Lemma 2.2]{BobkovConvexHeavyTailedMeasures}.
\end{rem}

\begin{proof}[Proof of Theorem \ref{thm:Isop-Conc-NSpace}]

Let $A$ denote an isoperimetric minimizer of measure $v \in (0,1)$. We proceed with the notation used in the proof of Theorem \ref{thm:CDD-II}, and recall that the regular part $S$ of the boundary of $A$ in the interior of $M$ has constant generalized mean-curvature $H^\nu_{S,\mu} \equiv H_\mu(A)$, where $S$ is co-oriented by the outer unit normal vector field $\nu$. 

\begin{enumerate}
\item
It is well-known (see e.g. \cite{BavardPansu},\cite{MorganJohnson},\cite[Lemma 3.4.12]{BayleThesis}), that $H_\mu(A)$ is essentially the derivative of $\I$ at $v$, or more precisely:
\begin{equation} \label{eq:H-is-derivative}
\limsup_{\eps\rightarrow 0+} \frac{\I(v+\eps) - \I(v)}{\eps} \leq H_\mu(A) \leq
\liminf_{\eps\rightarrow 0-} \frac{\I(v+\eps) - \I(v)}{\eps} ~.
\end{equation}
Recall also that by (\ref{eq:proof-II}), $int(M) \cap A \subset (-S)^+_\infty$, where $-S$ denotes the hypersurface $S$ with reversed orientation (and hence mean-curvature). Applying the generalized Heintze--Karcher Theorem \ref{thm:HK} to $-S$ (as $N$ is in the required range for the theorem to apply), we obtain:
\[ v = \mu(int(M) \cap A) \leq \mu((-S)^+_D) \leq  \int_S \int_0^{\infty} J_{H^{-\nu}_{S,\mu}(x),0,N}(t) dt \; dvol_{S,\mu}(x) .
\]
But since $H^{-\nu}_{S,\mu} \equiv -H_\mu(A)$ and $vol_{S,\mu}(S) = \mu^+(A) = \I(v)$, it follows that:
\[
\frac{v}{\I(v)} \leq \int_0^\infty J_{-H_\mu(A),0,N}(t) dt = \int_0^\infty \brac{\brac{1 - \frac{H_\mu(A) t}{N-1}}_+}^{N-1} dt 
\]
(where the latter integrand is interpreted as $\exp(- H_\mu(A) t)$ when $N = \infty$). 
When $H_\mu(A) > 0$ and $N \in (-\infty,0) \cup (1,\infty]$, observe that the latter integral is equal to $\frac{N-1}{N} \frac{1}{H_\mu(A)}$ (note that the integral diverges if $N \in [0,1)$, and hence this range has been excluded in the formulation of the theorem). Consequently:
\[
\frac{v}{\I(v)} \leq \frac{N-1}{N} \frac{1}{\max(0,H_\mu(A))} .
\] 

Now regardless of whether $H_\mu(A) > 0$, we conclude using (\ref{eq:H-is-derivative}) that:
\begin{equation} \label{eq:weak-concave}
 \limsup_{\eps\rightarrow 0+} \frac{\I(v+\eps) - \I(v)}{\eps} \leq H_\mu(A) \leq \frac{N-1}{N}\frac{\I(v)}{v} ~.
\end{equation}
Denoting $f(v) = \I(v)/v^{\frac{N-1}{N}}$, it is straightforward to verify that this is equivalent to:
\begin{equation} \label{eq:f-decreases}
\limsup_{\eps\rightarrow 0+} \frac{f(v+\eps) - f(v)}{\eps} \leq 0 .
\end{equation}
It was shown in \cite[Section 6]{EMilman-RoleOfConvexity} that as soon as $\mu$ has density which is locally bounded from above, then $\I$ is continuous on $(0,1)$. Consequently, the function $f$ is also continuous there, and it is immediate to verify that (\ref{eq:f-decreases}) entails that $f$ is non-increasing. We thus confirm that $\I^{\frac{N}{N-1}}(v) / v$ is non-increasing, thereby concluding the proof of the first assertion. 
 
\item
Since $\I(v)/v^{\frac{N-1}{N}}$ is non-increasing by the first part, it follows that the infimum (\ref{eq:def-DCheN}) in the definition of $D_{Che,N}$ is attained at $v=1/2$, yielding the second assertion. 

\item
Now consider an isoperimetric minimizer $A$ of measure $v = 1/2$. Since $\mu(A) = \mu(M \setminus A) = 1/2$ and $\mu^+(A) = \mu^+(M \setminus A) = \I(v)$, by exchanging $A$ with $M\setminus A$ if necessary, we may assume that the constant generalized mean-curvature $H_\mu(A)$ of the regular part of the boundary $S$ in the interior of $M$ (with respect to the outer unit-normal field $\nu$)  satisfies $H_\mu(A) \leq 0$. Given $r > 0$, we apply the generalized Heintze--Karcher theorem as above to bound $\mu(S^+_r)$ from above. The bound from below is provided by the concentration inequality, yielding together:
\[ \frac{1}{2} - \K(r) \leq \mu(S^+_r) \leq \I(1/2) \int_0^r J_{H_\mu(A), 0,N}(t) dt . 
\]
But since $H_\mu(A) \leq 0$ the integrand is bounded above by $1$, and therefore:
\[
D_{Che,N} = 2^{\frac{N-1}{N}} \I(1/2) \geq \sup_{r > 0} 2^{\frac{N-1}{N}} \frac{\frac{1}{2} -\K(r)}{r} . 
\]
The other direction follows by taking the limit $r \rightarrow 0$ in (\ref{eq:N-conc}), thereby concluding the proof of the third assertion. 
 \end{enumerate}
\end{proof}

As a standard corollary, we have:
\begin{cor} \label{cor:Isop-Conc-On-NSpace}
Assume that $(M^n,g,\mu)$ satisfies the $\CD(0,N)$ condition, $N \in (-\infty,0) \cup [n,\infty]$. Then $D_{Che,N} = D_{Che,N}(M,g,\mu) > 0$. Specifically, let $R > 0$ be such that $\mu(B(x_0,R)) \geq 3/4$ for some $x_0 \in M$, where $B(x_0,R)$ denotes the geodesic-ball of radius $R$ around $x_0$. Then $D_{Che,N} \geq 2^{\frac{N-1}{N}}/(8 R)$. In particular, $(M,g,\mu)$ always satisfies an $N$-degree polynomial concentration (\ref{eq:N-conc}). 
\end{cor}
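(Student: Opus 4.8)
The plan is to combine the weak concavity/concentration dictionary established in Theorem \ref{thm:Isop-Conc-NSpace} with a crude ball estimate to produce an explicit lower bound on $D_{Che,N}$. First I would observe that by Part (2) of Theorem \ref{thm:Isop-Conc-NSpace}, $D_{Che,N} = 2^{\frac{N-1}{N}} \I(1/2)$, so it suffices to bound $\I(1/2)$ from below; equivalently, by Part (3), it suffices to bound $\sup_{r > 0} \frac{\frac12 - \K(r)}{r}$ from below, i.e. to exhibit a single radius $r_0$ at which $\K(r_0)$ is strictly less than $1/2$ together with a quantitative gap. This is exactly where the hypothesis $\mu(B(x_0,R)) \geq 3/4$ enters.

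The key step is a standard argument producing weak concentration from a ball of large measure. Given any Borel set $A$ with $\mu(A) \geq 1/2$, since $\mu(B(x_0,R)) \geq 3/4$ we have $\mu(A \cap B(x_0,R)) \geq 1/4 > 0$, so $A$ meets $B(x_0,R)$; hence every point of $B(x_0,R)$ lies within distance $2R$ of $A$, i.e. $B(x_0,R) \subset A_{2R}$. Therefore $\mu(A_{2R}) \geq \mu(B(x_0,R)) \geq 3/4$, which gives $1 - \mu(A_{2R}) \leq 1/4$, and taking the pointwise maximal such function yields $\K(2R) \leq 1/4$. Plugging $r_0 = 2R$ into Part (3) of Theorem \ref{thm:Isop-Conc-NSpace}:
\[
D_{Che,N} \;\geq\; 2^{\frac{N-1}{N}} \frac{\frac12 - \K(2R)}{2R} \;\geq\; 2^{\frac{N-1}{N}} \frac{\frac12 - \frac14}{2R} \;=\; \frac{2^{\frac{N-1}{N}}}{8R} \;>\; 0 .
\]
In particular $D_{Che,N} > 0$, so $(M,g,\mu)$ satisfies an $N$-dimensional Cheeger isoperimetric inequality, and consequently the $N$-degree polynomial concentration estimate (\ref{eq:N-conc}) holds with this value of $D_{Che,N}$.

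There is essentially no serious obstacle here: the content is entirely front-loaded into Theorem \ref{thm:Isop-Conc-NSpace}, whose Parts (2) and (3) already package the hard geometric input (the generalized Heintze--Karcher theorem and the weak concavity of $\I^{\frac{N}{N-1}}$). The only minor points to check are that some $R$ with $\mu(B(x_0,R)) \geq 3/4$ exists at all — which follows since $\mu$ is a probability measure and the geodesic balls $B(x_0,R)$ exhaust $M$ as $R \to \infty$, so $\mu(B(x_0,R)) \to 1$ by continuity from below — and that the triangle-inequality inclusion $B(x_0,R) \subset A_{2R}$ is applied to the geodesic distance $d$ on $(M,g)$, which is indeed the metric used throughout in defining $\I$, $\K$ and $A_\eps$. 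I would also remark that the constant $3/4$ (hence the constant $8$) is not optimized: any threshold strictly above $1/2$ would suffice, producing a lower bound of the form $2^{\frac{N-1}{N}}(\mu(B(x_0,R)) - 1/2)/(2R)$.
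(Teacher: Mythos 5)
Your proposal is correct and follows essentially the same route as the paper: deduce $\K(2R)\leq 1/4$ from the fact that any set of measure $\geq 1/2$ must meet $B(x_0,R)$ and hence $B(x_0,R)\subset A_{2R}$, then invoke Part (3) of Theorem \ref{thm:Isop-Conc-NSpace} to convert this single weak concentration estimate into the lower bound $D_{Che,N}\geq 2^{\frac{N-1}{N}}/(8R)$. The paper's proof is identical in substance; you merely spell out the triangle-inequality step and the existence of such an $R$ in a bit more detail.
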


\begin{rem}
Note that we cannot expect in general to obtain better concentration. When $N < 0$, this is witnessed by the $\CD(0,N)$ space $([0,\infty),\abs{\cdot},-N (1+ t)^{N-1} dt)$. The case $N \in [n,\infty]$ has already been treated in the literature, see e.g. \cite{EMilmanGeometricApproachPartI}. 
\end{rem}

\begin{rem}
In the Euclidean setting $(\Real^n,\abs{\cdot},\mu)$, where $\CD(0,N)$ spaces precisely coincide with Borell's class of $\frac{1}{N}$-concave measures (more precisely, with absolutely continuous measures having $C^2$-smooth density in this class), Corollary \ref{cor:Isop-Conc-On-NSpace} was already proved by Bobkov \cite{BobkovConvexHeavyTailedMeasures}, who extended a previous estimate of Kannan--Lov\'asz--Simonovits \cite{KLS} in the case $N=\infty$. 
\end{rem}

\begin{proof}[Proof of Corollary \ref{cor:Isop-Conc-On-NSpace}]
Since $M$ is connected, there always exists an $R>0$ so that $\mu(B(x_0,R)) \geq 3/4$ for any $x_0 \in M$. Let $A$ be any Borel subset of $M$ with $\mu(A) \geq 1/2$. Then $A$ and $B(x_0,R)$ must intersect, and hence $A_{2R} \supset B(x_0,R)$. Consequently, $\mu(A_{2R}) \geq 3/4$ and therefore $\K(2R) \leq 1/4$. The result now immediately follows from Theorem \ref{thm:Isop-Conc-NSpace}. 
\end{proof}

\subsection{Functional Inequalities - Preliminaries}

Using nowadays standard methods (see e.g. \cite{BobkovHoudreMemoirs,BobkovConvexHeavyTailedMeasures} and the references therein), we may rewrite the $N$-dimensional Cheeger inequality as a (weak) Sobolev-type inequality. For the reader's convenience, we sketch the proofs. We begin with some definitions:

\begin{defn}
Given a locally Lipschitz function $f : (M,g) \rightarrow \Real$, its local Lipschitz constant is defined as the following (Borel measurable) function on $(M,g)$ (cf. \cite{BobkovHoudreMemoirs}):
\[
\abs{\nabla f}(x) := \limsup_{y \rightarrow x} \frac{\abs{f(y) - f(x)}}{d(y,x)} ~,
\]
where as usual, $d$ is the induced geodesic distance on $(M,g)$. When $f$ is $C^1$ smooth, the local Lipschitz constant clearly coincides with the Riemannian length of the gradient. 
\end{defn}

\begin{defn}
A median $\med_\mu f$ of a Borel measurable function $f : (M,g,\mu) \rightarrow \Real$ is defined as a median of the push-forward of $\mu$ by $f$, namely, a value $M \in \Real$ so that $\mu(\set{ f \geq M}) \geq 1/2$ and $\mu(\set{ f \leq M}) \geq 1/2$. 
\end{defn}

\begin{defn} \label{def:Lorentz}
Given $\alpha \in (0,\infty)$, $r \in (0,\infty]$ and a measurable real-valued function $f$ on a $\sigma$-finite measure space $(\Omega,\mu)$, recall that the Lorentz quasi-norms are defined as:
\[
\norm{f}_{L^{\alpha,r}(\mu)} := \brac{r \int_0^\infty t^{r} \mu(\set{\abs{f} \geq t})^{r/\alpha} \frac{dt}{t}}^{1/r} ,
\]
with the usual interpretation when $r = \infty$:
\[
\norm{f}_{L^{\alpha,\infty}(\mu)} := \sup_{t > 0} \; t \cdot \mu(\set{\abs{f} \geq t})^{1/\alpha} ~.
\]
Note that our normalization may differ by some numerical constants from other variants used in the literature (our convention coincides with that of \cite{GalingInequalitiesBook}, which ensures that $\norm{1_A}_{L^{\alpha,r}(\mu)} = \mu(A)^{1/\alpha}$ for all $r > 0$, but differs from that of \cite{GrafakosClassicalFourierAnalysis2ndEd}). 
Note that $\norm{f}_{L^{\alpha,\alpha}(\mu)} = \norm{f}_{L^{\alpha}(\mu)}$, and we use the standard convention that $\norm{f}_{L^{\infty,\infty}(\mu)} = \norm{f}_{L^\infty(\mu)}$.
\end{defn}
\begin{rem}
It is well-known (e.g. \cite[Section 1.4]{GrafakosClassicalFourierAnalysis2ndEd}) that these are actual norms when  $1 \leq r \leq \alpha < \infty$, but in general, they are only quasi-norms: 
\[
\norm{f+g}_{L^{\alpha,r}(\mu)} \leq 2^{1/\alpha} \max(1,2^{(1-r)/r}) \brac{\norm{f}_{L^{\alpha,r}(\mu)} + \norm{g}_{L^{\alpha,r}(\mu)}} .
\]
Our normalization is particularly useful since (e.g. \cite[Theorem 10.4.2]{GalingInequalitiesBook}):
\[
0 <  r_1 \leq r_2 \leq \infty \;\;\; \Rightarrow \;\;\; \norm{f}_{L^{\alpha,r_2}(\mu)} \leq  \norm{f}_{L^{\alpha,r_1}(\mu)} ,
\]
and when $\mu$ is a probability measure, we trivially have:
\[
0 < \alpha_1 \leq \alpha_2 \leq \infty \;\;\; \Rightarrow \;\;\; \norm{f}_{L^{\alpha_1,r}(\mu)} \leq \norm{f}_{L^{\alpha_2,r}(\mu)} .
\]
In particular, the $L^\alpha$ quasi-norm is weaker (stronger) than the $L^{\alpha,1}$ one if $\alpha > 1$ ($\alpha < 1$), and both are stronger than the weak $L^{\alpha,\infty}$ quasi-norm, i.e.:  \[
\alpha \in (0,1) \;\;\; \Rightarrow \;\;\; \norm{f}_{L^{\alpha}(\mu)} \geq \norm{f}_{L^{\alpha,1}(\mu)} \geq \norm{f}_{L^{\alpha,\infty}(\mu)} ,
\]
\begin{equation} \label{eq:LorentzBig}
\alpha \geq 1 \;\;\; \Rightarrow \;\;\; \norm{f}_{L^{\alpha,1}(\mu)} \geq  \norm{f}_{L^{\alpha}(\mu)} \geq \norm{f}_{L^{\alpha,\infty}(\mu)} .
\end{equation}
\end{rem}

The following proposition is essentially known to experts, starting from the groundbreaking work of  Federer-Fleming \cite{FedererFleming} and Maz'ya \cite{MazyaSobolevImbedding} in connection to the optimal constant in Gagliardo's inequality on $\Real^n$. When $\mu$ is a probability measure, it reads as follows:

\begin{prop} \label{prop:func-equiv}
Let $(M^n,g,\mu)$ denote a weighted Riemannian manifold and $\alpha > 0$. 
The following statements regarding the existence of a constant $D> 0$ are equivalent:
\begin{enumerate}
\item
For every Borel set $A \subset (M,g)$:
\[
\mu^+(A) \geq D \min(\mu(A),1-\mu(A))^{1/\alpha} ~.
\]
\item
For any non-negative locally Lipschitz function $f : (M,g) \rightarrow \Real_+$ with $\med_\mu(f) = 0$, we have:
\[
\int \abs{\nabla f} d\mu \geq  D  \norm{f}_{L^{\alpha,1}(\mu)} ~.
\]
\item
For any non-negative locally Lipschitz function $f : (M,g) \rightarrow \Real_+$ with $\med_\mu(f) = 0$, we have:
\[
\int \abs{\nabla f} d\mu \geq D \norm{f}_{L^{\alpha,\infty}(\mu)} ~.
\]
\end{enumerate}
When $\alpha \in (0,1)$, statements (1),(2) and (3) are equivalent to the following Nash-type inequality:
\begin{enumerate}
\setcounter{enumi}{3}
\item For any essentially bounded non-negative locally Lipschitz function $f : (M,g,\mu) \rightarrow \Real_+$ with $\med_\mu(f) = 0$, we have:
\[
\norm{f}_{L^1(\mu)} \leq D^{-\alpha}  (\int \abs{\nabla f} d\mu)^{\alpha}  \norm{f}_{L^\infty(\mu)}^{1-\alpha} ~.
\]
\end{enumerate}
When $\alpha \geq 1$, statements (1),(2) and (3) are equivalent to following Gagliardo inequality:
\begin{enumerate}
\setcounter{enumi}{4}
\item For any non-negative locally Lipschitz function $f : (M,g,\mu) \rightarrow \Real_+$ with $\med_\mu(f) = 0$, we have:
\[
\int \abs{\nabla f} d\mu \geq D \norm{f}_{L^{\alpha}(\mu)} ~.
\]
\end{enumerate}
When $\alpha \geq 1$, the assumption on non-negativity of $f$ may be dropped. When $\alpha \in (0,1)$, it may be dropped when passing from one of the above statements to the other, at the expense of multiplying the constant $D$ in the conclusion by an additional $2^{1-\frac{1}{\alpha}}$ factor.
\end{prop}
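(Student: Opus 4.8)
The plan is to run the standard \emph{co-area $+$ layer-cake} machinery (cf. \cite{FedererFleming,MazyaSobolevImbedding,BobkovHoudreMemoirs,BobkovConvexHeavyTailedMeasures}). Its two ingredients are the co-area inequality
\[
\int_M \abs{\nabla f}\, d\mu \;\geq\; \int_0^\infty \mu^+(\set{f > t})\, dt
\]
for non-negative locally Lipschitz $f$ (valid for the local Lipschitz constant, since $f$ has only measure-zero critical levels and $\mu^+(\set{f>t})$ is controlled by the weighted area of $\set{f=t}$), together with the distribution-function representation of the Lorentz quasi-norms, which for $f\ge 0$ reads $\norm{f}_{L^{\alpha,1}(\mu)} = \int_0^\infty \mu(\set{f\geq t})^{1/\alpha}\,dt$ and $\norm{f}_{L^{\alpha,\infty}(\mu)} = \sup_{t>0} t\,\mu(\set{f\geq t})^{1/\alpha}$. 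I would prove the core equivalence $(1)\Leftrightarrow(2)\Leftrightarrow(3)$ via the cycle $(1)\Rightarrow(2)\Rightarrow(3)\Rightarrow(1)$, and then peel off $(4)$ and $(5)$ using the Lorentz-norm orderings recorded in the Remark following Definition \ref{def:Lorentz}.

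\textbf{The cycle.} For $(1)\Rightarrow(2)$: if $f\ge 0$ has $\med_\mu f = 0$ then $\mu(\set{f>t})\le 1/2$ for every $t>0$, so $\min(\mu,1-\mu)=\mu$ there and $(1)$ gives $\mu^+(\set{f>t}) \ge D\,\mu(\set{f>t})^{1/\alpha}$; integrating this in $t$ against the co-area inequality yields $\int\abs{\nabla f}d\mu \ge D\int_0^\infty \mu(\set{f>t})^{1/\alpha}dt = D\norm{f}_{L^{\alpha,1}(\mu)}$. The implication $(2)\Rightarrow(3)$ is immediate from $\norm{f}_{L^{\alpha,\infty}(\mu)}\le\norm{f}_{L^{\alpha,1}(\mu)}$ (monotonicity of Lorentz quasi-norms in the second index). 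For $(3)\Rightarrow(1)$: given a Borel set $A$ with $\mu^+(A)<\infty$, approximate $1_A$ by $f_\eps(x) := (1 - \dist(x,A)/\eps)_+$; this is locally Lipschitz, satisfies $f_\eps \ge 1_A$ (hence $\norm{f_\eps}_{L^{\alpha,\infty}(\mu)}\ge \mu(A)^{1/\alpha}$) and $\int\abs{\nabla f_\eps}d\mu \le \mu(A_\eps\setminus A)/\eps$. When $\mu(A)\le 1/2$ one checks $\med_\mu f_\eps = 0$ for all small $\eps>0$: if $\mu(\partial A)>0$ then $\mu^+(A)=\infty$ and there is nothing to prove, otherwise $\mu(A_\eps)\downarrow\mu(A)<1/2$; so $(3)$ gives $\mu(A_\eps\setminus A)/\eps \ge D\mu(A)^{1/\alpha}$, and $\liminf_{\eps\to0}$ (which by definition is $\mu^+(A)$) proves $(1)$ for such $A$. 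The case $\mu(A)>1/2$ is reduced to the complement, using that $\mu^+(A)=\mu^+(M\setminus A)$ whenever $\mu(\partial A)=0$ (the inner and outer $\eps$-shells have the same $\mu$-measure to first order) and $\mu^+(A)=\infty$ otherwise.

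\textbf{The Nash and Gagliardo variants.} When $\alpha\ge 1$, inequality (\ref{eq:LorentzBig}) gives $\norm{f}_{L^{\alpha,1}(\mu)} \ge \norm{f}_{L^\alpha(\mu)} \ge \norm{f}_{L^{\alpha,\infty}(\mu)}$, so $(5)$ is squeezed between $(2)$ and $(3)$ and is therefore equivalent to them. When $\alpha\in(0,1)$, for $(1)\Rightarrow(4)$ I would again use co-area: with $f\ge 0$ bounded and $\med_\mu f = 0$, the first-paragraph computation gives $\int\abs{\nabla f}d\mu \ge D\int_0^{\norm{f}_{L^\infty(\mu)}}\mu(\set{f>t})^{1/\alpha}dt$, and then H\"older's inequality on $[0,\norm{f}_{L^\infty(\mu)}]$ with exponents $1/\alpha$ and $1/(1-\alpha)$ gives $\norm{f}_{L^1(\mu)} = \int_0^{\norm{f}_{L^\infty(\mu)}}\mu(\set{f>t})\,dt \le \brac{\int_0^{\norm{f}_{L^\infty(\mu)}}\mu(\set{f>t})^{1/\alpha}dt}^{\alpha}\norm{f}_{L^\infty(\mu)}^{1-\alpha}$, which combined with the previous line is exactly $(4)$ with the stated constant $D^{-\alpha}$ and no further loss; the reverse $(4)\Rightarrow(1)$ uses the same truncated approximants $f_\eps$ (which satisfy $\norm{f_\eps}_{L^\infty(\mu)}\le 1$). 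Finally, to drop non-negativity one decomposes $f = f_+ - f_-$ with $f_\pm := \max(\pm f,0)$: both pieces have $\med_\mu = 0$, and since $\nabla f = 0$ a.e. on $\set{f = 0}$ we have $\int\abs{\nabla f}d\mu = \int\abs{\nabla f_+}d\mu + \int\abs{\nabla f_-}d\mu$. Applying the non-negative statement to each piece and recombining — by the triangle inequality of $L^\alpha$ or $L^{\alpha,1}$, or by subadditivity of $x\mapsto x^{1/\alpha}$ together with $\mu(\set{\abs{f}\ge t})=\mu(\set{f_+\ge t})+\mu(\set{f_-\ge t})$ for $L^{\alpha,\infty}$, all valid for $\alpha\ge 1$ — yields the claim with the same $D$; for $\alpha\in(0,1)$ the estimate $a^\alpha + b^\alpha \le 2^{1-\alpha}(a+b)^\alpha$ produces precisely the extra $2^{1-\frac1\alpha}$ factor.

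\textbf{Main obstacle.} There is no deep obstruction: the argument is a chain of routine reductions. The only points needing genuine care are (i) justifying the co-area inequality for the \emph{local Lipschitz constant} $\abs{\nabla f}$ rather than a smooth gradient (reduce to regular levels of $f$), and (ii) the $\min(\mu(A),1-\mu(A))$ versus $\med_\mu f = 0$ bookkeeping in the functional$\Rightarrow$isoperimetric direction, including the harmless edge cases $\mu(\partial A)>0$ and $\mu(A)=1/2$. The analytic content — integrating against the co-area inequality and the single use of H\"older for the Nash inequality — is short.
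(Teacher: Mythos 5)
Your plan is essentially the paper's argument: co-area (Bobkov--Houdr\'e) for $(1)\Rightarrow(2)$, the Lorentz quasi-norm ordering for $(2)\Rightarrow(3)$, truncated distance functions for $(3)\Rightarrow(1)$ (the paper runs the cycle as $(2)\Rightarrow(3)\Rightarrow(1)\Rightarrow(2)$, but that is the same cycle), the single H\"older step for $(4)$, the sandwich $\norm{f}_{L^{\alpha,1}}\geq\norm{f}_{L^\alpha}\geq\norm{f}_{L^{\alpha,\infty}}$ for $(5)$, and the $f=f^+-f^-$ split with the elementary power inequality $\norm{f^+}_{L^{\alpha,1}}+\norm{f^-}_{L^{\alpha,1}}\geq 2^{1-\max(1/\alpha,1)}\norm{f}_{L^{\alpha,1}}$ to drop non-negativity.

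One detail in your $(3)\Rightarrow(1)$ step does not quite close. You approximate by $f_\eps=(1-\dist(\cdot,A)/\eps)_+$, which is supported on $A_\eps$; for this to have median $0$ you need $\mu(A_\eps)\leq 1/2$ for small $\eps$, which follows from $\mu(\overline{A})\leq 1/2$ but \emph{not} from $\mu(A)\leq 1/2$ alone. Your escape clause ``$\mu(\partial A)>0\Rightarrow\mu^+(A)=\infty$'' is not correct as stated (e.g.\ $A$ closed with $\mu(\partial A)>0$): the relevant quantity is $\mu(\overline{A}\setminus A)$, and when that vanishes with $\mu(A)=1/2$ one can still have $\mu(A_\eps)>1/2$ for all $\eps>0$, so $\med_\mu f_\eps\neq 0$ and reduction to the complement (which also has measure $1/2$) does not help. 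The paper sidesteps this by using the \emph{complementary} ramp $g_\eps=\min(\dist(\cdot,A)/\eps,1)$ whenever $\mu(A)\geq 1/2$: it vanishes on $A$, so $\med_\mu g_\eps=0$ automatically, its weak Lorentz norm is at least $(1-\mu(A_\eps))^{1/\alpha}\to\mu(M\setminus A)^{1/\alpha}$, and $\int\abs{\nabla g_\eps}d\mu\leq\mu(A_\eps\setminus A)/\eps$. Adopting that choice for $\mu(A)\geq1/2$ (and your $f_\eps$ only for $\mu(A)<1/2$) closes the gap; everything else in your plan goes through as written.
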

\begin{proof}[Sketch of Proof]
Clearly $\norm{f}_{L^{\alpha,1}(\mu)} \geq \norm{f}_{L^{\alpha,\infty}(\mu)}$, so statement (2) implies (3). To see that statement (3) implies (1), let $A\subset (M,g)$ denote a Borel set  and $\mu^{+}(A) < \infty$ (otherwise there is nothing to prove). If $\mu(A) \geq 1/2$, set $f_\eps(x) = g_\eps(x) :=\min(\inf_{y \in A} d(x,y) / \eps,1)$, and otherwise set $f_\eps(x) = 1-g_\eps(x)$; letting $\eps \rightarrow 0$, one easily recovers (1) (see \cite[Section 3]{BobkovHoudreMemoirs} for more details). Next, (1) implies (2) by the generalized co-area inequality of Bobkov--Houdr\'e \cite{BobkovHoudreMemoirs}. Indeed, for a non-negative locally-Lipschitz function $f$:
\[
\int \abs{\nabla f} d\mu  \geq \int_{0}^\infty \mu^+(\set{ f > t}) dt  \geq \int_{0}^\infty \mu(\set{f > t})^{1/\alpha} dt  = \norm{f}_{L^{\alpha,1}(\mu)} .
\]
If $f$ is not assumed non-negative, we apply the above argument to $f^+ = \max(f,0)$ and $f^{-} = \max(-f,0)$, and sum the resulting two inequalities, using:
\[
\int\abs{\nabla f} d\mu = \int_{\set{f > 0}} \abs{\nabla f} d\mu + \int_{\set{f < 0}} \abs{\nabla f} d\mu = \int\abs{\nabla f^+} d\mu + \int\abs{\nabla f^-} d\mu .
\]
It remains to note that:
\[
\norm{f^+}_{L^{\alpha,1}(\mu)} + \norm{f^-}_{L^{\alpha,1}(\mu)} \geq 2^{1-\max(\frac{1}{\alpha},1)} \norm{f}_{L^{\alpha,1}(\mu)} .
\]

When $\alpha \in (0,1)$, (2) implies (4) after an application of H\"{o}lder's inequality:
\[
\norm{f}_{L^1(\mu)} = \int_0^{\norm{f}_{L^\infty(\mu)}} \mu(\set{\abs{f} \geq t}) dt \leq \norm{f}_{L^{\alpha,1}(\mu)}^{\alpha} \norm{f}_{L^\infty(\mu)}^{1-\alpha} ~.
\]
Conversely, (4) implies (1) by applying it to $f_\eps$ as described above.

When $\alpha \geq 1$, (2) implies (5) which implies (3) by noting (\ref{eq:LorentzBig}). 
\end{proof}

\begin{rem}
A careful inspection of the proof reveals that if we replace the $L^\infty(\mu)$ norm in (4) by the essential oscillation of $f$ ($\operatorname{ess}\sup_\mu (f) - \operatorname{ess}\inf_\mu(f)$), then there is no need to incur an additional $2^{1-\frac{1}{\alpha}}$ factor when handling arbitrarily signed functions $f$ and $\alpha \in (0,1)$. We refer to \cite[Section 8]{BobkovConvexHeavyTailedMeasures} for more details, and refrain from further pursuing this minor point. 
\end{rem}

The above isoperimetric-type inequalities always imply weaker Sobolev and Nash inequalities. The traditional case $\alpha \geq 1$ is well-known (see e.g. \cite{EMilman-RoleOfConvexity}), so we concentrate on the case $\alpha \in (0,1)$.

\begin{prop} \label{prop:weak-Sobolev}
Let $(M^n,g,\mu)$ denote a weighted Riemannian manifold and $\alpha \in (0,1)$. 
The following isoperimetric-type inequality, asserting the existence of $C >0$ so that for any (non-negative) locally-Lipschitz function $f : (M,g) \rightarrow \Real$ with $\med_\mu f = 0$:
\begin{equation} \label{eq:assump-weak-Sobolev}
\norm{f}_{L^{\alpha,1}(\mu)} \leq C \norm{ \abs{\nabla f} }_{L^1(\mu)}  ,
\end{equation}
implies the following weak Sobolev inequality for any (non-negative) locally-Lipschitz function $g  : (M,g) \rightarrow \Real$ with $\med_\mu g = 0$:
\begin{equation} \label{eq:concl-weak-Sobolev}
\norm{g}_{L^{p,p/\alpha}(\mu)} \leq C p \brac{\frac{q}{p}}^{\frac{1}{q}} \norm{\abs{\nabla g}}_{L^{q,p/\alpha}(\mu)} ,
\end{equation}
for any $p,q$ satisfying $\alpha \leq p \leq \frac{\alpha}{1-\alpha}$, $1 \leq q \leq \infty$ and $\frac{1}{p} - \frac{1}{q} = \frac{1}{\alpha} - 1$. 
\end{prop}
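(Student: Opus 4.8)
The plan is to deduce (\ref{eq:concl-weak-Sobolev}) from (\ref{eq:assump-weak-Sobolev}) by a single application of the latter to an appropriate power of $g$, followed by a Hölder inequality in Lorentz spaces. First I would reduce to the case $g \ge 0$ with $\med_\mu g = 0$ (the general, signed, case then follows by applying the conclusion to $\max(g,0)$ and $\max(-g,0)$ and recombining, exactly as in Proposition \ref{prop:func-equiv}). Set $s := p/\alpha$; since $\alpha \le p$ we have $s \ge 1$, so $g^s$ is again locally Lipschitz and has zero $\mu$-median, and the idea is to apply (\ref{eq:assump-weak-Sobolev}) to $f := g^s$.

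The next step is to rewrite both sides of $\norm{g^s}_{L^{\alpha,1}(\mu)} \le C\,\norm{\abs{\nabla(g^s)}}_{L^1(\mu)}$ in terms of $g$. For the left-hand side, the substitution $t = u^s$ in the integral defining the Lorentz quasi-norm gives the exact identity
\[
\norm{g^s}_{L^{\alpha,1}(\mu)} = \int_0^\infty \mu(\set{g^s \ge t})^{1/\alpha}\,dt = \frac{p}{\alpha}\int_0^\infty u^{p/\alpha - 1}\mu(\set{g \ge u})^{1/\alpha}\,du = \norm{g}_{L^{p,p/\alpha}(\mu)}^{p/\alpha} .
\]
For the right-hand side, the chain rule for the local Lipschitz constant gives $\abs{\nabla(g^s)} = s\,g^{s-1}\abs{\nabla g}$ $\mu$-a.e., so $\norm{\abs{\nabla(g^s)}}_{L^1(\mu)} = s\int g^{s-1}\abs{\nabla g}\,d\mu$. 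The crux is to estimate this integral: by the Hardy--Littlewood rearrangement inequality together with a weighted one-dimensional Hölder inequality (equivalently, Hölder's inequality for Lorentz spaces), one obtains
\[
\int g^{s-1}\abs{\nabla g}\,d\mu \le \kappa\, \norm{g^{s-1}}_{L^{q', (p/\alpha)'}(\mu)}\, \norm{\abs{\nabla g}}_{L^{q,p/\alpha}(\mu)} ,
\]
where $q' = q/(q-1)$, $(p/\alpha)' = p/(p-\alpha)$, and $\kappa$ is the Lorentz--Hölder constant attached to the normalization of Definition \ref{def:Lorentz}. The hypothesis $\frac{1}{p} - \frac{1}{q} = \frac{1}{\alpha} - 1$ is used precisely here: it guarantees that the Lorentz indices on the right are Hölder-conjugate to the pairs $(p,p/\alpha)$ and $(q,p/\alpha)$ simultaneously. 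Finally, a second change of variables, of exactly the same type as the one above, identifies $\norm{g^{s-1}}_{L^{q',(p/\alpha)'}(\mu)} = \norm{g}_{L^{p,p/\alpha}(\mu)}^{p/\alpha - 1}$.

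Assembling these, (\ref{eq:assump-weak-Sobolev}) applied to $g^s$ becomes
\[
\norm{g}_{L^{p,p/\alpha}(\mu)}^{p/\alpha} \le C\, s\, \kappa\, \norm{g}_{L^{p,p/\alpha}(\mu)}^{p/\alpha - 1}\, \norm{\abs{\nabla g}}_{L^{q,p/\alpha}(\mu)} ,
\]
and dividing through by $\norm{g}_{L^{p,p/\alpha}(\mu)}^{p/\alpha - 1}$ — after separately disposing of the two trivial cases where this quantity is $0$ or $+\infty$ — gives $\norm{g}_{L^{p,p/\alpha}(\mu)} \le C\, s\, \kappa\, \norm{\abs{\nabla g}}_{L^{q,p/\alpha}(\mu)}$, so it only remains to check the elementary numerical bound $s\,\kappa = \frac{p}{\alpha}\,\kappa \le p\,(q/p)^{1/q}$.

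The hard part will be exactly this last bookkeeping: the numerical constants must be tracked carefully through the rearrangement and weighted-Hölder steps with the particular Lorentz normalization (the one for which $\norm{1_A}_{L^{\alpha,r}(\mu)} = \mu(A)^{1/\alpha}$), so as to collapse to the stated factor $p\,(q/p)^{1/q}$ rather than to a cruder constant; this is where one needs the sharp form of the underlying one-dimensional inequalities. A secondary but genuinely routine point is the reduction to non-negative $g$ and the treatment of the degenerate values of the Lorentz quasi-norms, including the limiting case $q = \infty$, which is read in the obvious limiting sense.
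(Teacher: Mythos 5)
Your proposal is correct and follows essentially the same route as the paper: substitute $f=g^{p/\alpha}$ into the hypothesis, rewrite $\norm{g^{p/\alpha}}_{L^{\alpha,1}(\mu)}=\norm{g}_{L^{p,p/\alpha}(\mu)}^{p/\alpha}$ by the power-substitution identity, bound $\int g^{p/\alpha-1}\abs{\nabla g}\,d\mu$ by H\"older's inequality for Lorentz spaces, identify $\norm{g^{p/\alpha-1}}_{L^{q',(p/\alpha)'}(\mu)}=\norm{g}_{L^{p,p/\alpha}(\mu)}^{p/\alpha-1}$ via the relations $(p/\alpha-1)q'=p$ and $(p/\alpha-1)(p/\alpha)'=p/\alpha$ (both forced by $\frac{1}{p}-\frac{1}{q}=\frac{1}{\alpha}-1$), and divide. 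The only detail you leave to be checked -- that $\frac{p}{\alpha}\kappa$ matches the stated constant -- in fact comes out to an exact equality: with the paper's normalization, $\kappa=(q^*/\beta^*)^{1/q^*}(q/\beta)^{1/q}$ with $\beta=p/\alpha$, and $\beta\kappa=(\beta q^*/\beta^*)^{1/q^*}q^{1/q}=p^{1/q^*}q^{1/q}=p(q/p)^{1/q}$.
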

\begin{rem}
Since $1 \leq p/\alpha \leq q$, note that the $L^{q,p/\alpha}(\mu)$ quasi-norm appearing on the right-hand side of (\ref{eq:concl-weak-Sobolev}) is in fact a genuine norm.
\end{rem}
\begin{proof}
Let $g$ denote a (non-negative) locally-Lipschitz function with $\med_\mu g = 0$. Set $f = \sign(g) \abs{g}^{\beta}$ with $\beta = p/\alpha \geq 1$ and note that $\med_\mu f = 0$. Applying (\ref{eq:assump-weak-Sobolev}), observe that:
\[
\snorm{\abs{g}^\beta}_{L^{\alpha,1}(\mu)} = \norm{g}^\beta_{L^{\beta \alpha, \beta}(\mu)} ,
\]
and that by H\"{o}lder's inequality for Lorentz spaces (e.g. \cite[p. 54]{GrafakosClassicalFourierAnalysis2ndEd}):
\[
\int \abs{\nabla f} d\mu = \beta \int \abs{g}^{\beta-1} \abs{\nabla g} d\mu \leq \beta \brac{\frac{q^*}{\beta^*}}^{\frac{1}{q^*}}  \brac{\frac{q}{\beta}}^{\frac{1}{q}} \snorm{\abs{g}^{\beta-1}}_{L^{q^*,\beta^*}(\mu)} \norm{\abs{\nabla g}}_{L^{q,\beta}(\mu)} ,
\]
where $\gamma^*$ denotes the conjugate H\"{o}lder exponent of $\gamma$. Noting that $(\beta-1) q^* = p$ and $(\beta-1) \beta^* = \beta$, we obtain:
\[
\norm{g}^\beta_{L^{p, \beta}(\mu)} \leq C \int \abs{\nabla f} d\mu \leq C p^{\frac{1}{q^*}} q^{\frac{1}{q}} \norm{g}^{\beta-1}_{L^{p,\beta}(\mu)} \norm{\abs{\nabla g}}_{L^{q,\beta}} ,
\]
and the asserted inequality readily follows. 
\end{proof}

\begin{prop} \label{prop:weak-Nash}
Let $(M^n,g,\mu)$ denote a weighted Riemannian manifold and $\alpha \in (0,1)$. 
The following isoperimetric-type inequality, asserting the existence of $C >0$ so that for any (non-negative) locally-Lipschitz function $f : (M,g) \rightarrow \Real$ with $\med_\mu f = 0$:
\begin{equation} \label{eq:assump-weak-Nash}
\norm{f}_{L^1(\mu)} \leq C \norm{\abs{\nabla f}}_{L^1(\mu)}^{\alpha}  \norm{f}_{L^\infty(\mu)}^{1-\alpha} ~.
\end{equation}
implies the following weak Nash inequality for any (non-negative) locally-Lipschitz function $g  : (M,g) \rightarrow \Real$ with $\med_\mu g = 0$:
\[
\norm{g}_{L^{p}(\mu)} \leq C^{\frac{\beta}{\alpha}} p^\beta \norm{\abs{\nabla g}}_{L^p(\mu)}^{\beta}  \norm{g}_{L^\infty(\mu)}^{1-\beta} ~,
\]
for any $p\geq 1$ and $\beta = \frac{\alpha}{\alpha + p(1-\alpha)}$. 
\end{prop}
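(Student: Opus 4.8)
The plan is to imitate the proof of Proposition~\ref{prop:weak-Sobolev}: apply the hypothesis~(\ref{eq:assump-weak-Nash}) to a well-chosen power of $g$ and then clean up the exponents with a single application of H\"older's inequality. Before doing so, I would dispose of the trivial cases. Since $p \geq 1$ and $\alpha \in (0,1)$ we have $\alpha + p(1-\alpha) > \alpha$, hence $\beta = \frac{\alpha}{\alpha + p(1-\alpha)} \in (0,1)$ and in particular $1-\beta > 0$; therefore, if $\norm{g}_{L^\infty(\mu)} = \infty$ or $\norm{\abs{\nabla g}}_{L^p(\mu)} = \infty$ the right-hand side is infinite and there is nothing to prove. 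So assume both are finite; as $\mu$ is a probability measure this forces $\norm{g}_{L^p(\mu)} \leq \norm{g}_{L^\infty(\mu)} < \infty$, and if $\norm{g}_{L^p(\mu)} = 0$ the inequality is again trivial, so we may assume $\norm{g}_{L^p(\mu)} \in (0,\infty)$.

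Next, set $f := \sign(g)\,\abs{g}^p$. Since $x \mapsto \sign(x)\abs{x}^p$ is an odd, strictly increasing, locally Lipschitz homeomorphism of $\Real$ (here $p \geq 1$ is used for local Lipschitzness), $f$ is locally Lipschitz, satisfies $\med_\mu f = 0$ because $\med_\mu g = 0$, and is non-negative whenever $g$ is, so the hypothesis applies to $f$ in both the general and the non-negative variants. A direct computation gives $\norm{f}_{L^1(\mu)} = \norm{g}_{L^p(\mu)}^p$, $\norm{f}_{L^\infty(\mu)} = \norm{g}_{L^\infty(\mu)}^p$, and $\abs{\nabla f} = p\,\abs{g}^{p-1}\abs{\nabla g}$ $\mu$-a.e.; hence, by H\"older's inequality with conjugate exponents $p$ and $\frac{p}{p-1}$ (the case $p=1$ being the trivial bound $\abs{g}^{p-1}\equiv 1$),
\[
\norm{\abs{\nabla f}}_{L^1(\mu)} = p \int \abs{g}^{p-1}\abs{\nabla g}\,d\mu \;\leq\; p\, \norm{g}_{L^p(\mu)}^{p-1}\,\norm{\abs{\nabla g}}_{L^p(\mu)},
\]
using that $(p-1)\cdot \frac{p}{p-1} = p$.

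Plugging these into~(\ref{eq:assump-weak-Nash}) yields
\[
\norm{g}_{L^p(\mu)}^p \;\leq\; C\, p^\alpha\, \norm{g}_{L^p(\mu)}^{\alpha(p-1)}\,\norm{\abs{\nabla g}}_{L^p(\mu)}^{\alpha}\,\norm{g}_{L^\infty(\mu)}^{p(1-\alpha)}.
\]
Dividing by the finite positive quantity $\norm{g}_{L^p(\mu)}^{\alpha(p-1)}$ and noting that $p - \alpha(p-1) = \alpha + p(1-\alpha) = \alpha/\beta$, this becomes
\[
\norm{g}_{L^p(\mu)}^{\alpha/\beta} \;\leq\; C\, p^\alpha\, \norm{\abs{\nabla g}}_{L^p(\mu)}^{\alpha}\,\norm{g}_{L^\infty(\mu)}^{p(1-\alpha)}.
\]
Raising both sides to the power $\beta/\alpha$, and using the identities $(C p^\alpha)^{\beta/\alpha} = C^{\beta/\alpha} p^{\beta}$ and $\frac{p(1-\alpha)\beta}{\alpha} = \frac{p(1-\alpha)}{\alpha + p(1-\alpha)} = 1-\beta$ (both immediate from the definition of $\beta$), gives exactly the asserted weak Nash inequality. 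There is no genuine obstacle here; the only points requiring (routine) care are the verification that $f$ inherits local Lipschitzness, vanishing median and, where relevant, non-negativity from $g$, and the bookkeeping of the two exponent identities $p - \alpha(p-1) = \alpha/\beta$ and $\frac{p(1-\alpha)\beta}{\alpha} = 1-\beta$.
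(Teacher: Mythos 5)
Your proof is correct and follows essentially the same route as the paper's: substitute $f = \sign(g)\,\abs{g}^p$ into the hypothesis, apply H\"older with exponents $p$ and $p/(p-1)$, and rearrange the exponents. The extra care you take over trivial cases and the verification of $p-\alpha(p-1)=\alpha/\beta$ and $p(1-\alpha)\beta/\alpha=1-\beta$ is exactly the routine bookkeeping the paper leaves to the reader after ``and the assertion follows.''
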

\begin{proof}
Let $g$ denote a (non-negative) locally-Lipschitz function with $\med_\mu g = 0$. Set $f = \sign(g) \abs{g}^p$ and note as usual that $\med_\mu f = 0$. Applying (\ref{eq:assump-weak-Nash}) followed by H\"{o}lder's inequality, we obtain:
\[
\int \abs{g}^p d\mu \leq C \brac{ p \int \abs{g}^{p-1} \abs{\nabla g} d\mu }^{\alpha} \norm{\abs{g}^p}_{L^\infty(\mu)}^{1-\alpha} \leq
C p^\alpha \brac{\int \abs{g}^p d\mu}^{\alpha(1-\frac{1}{p})} \norm{\abs{\nabla g}}^\alpha_{L^p(\mu)} \norm{g}_{L^\infty(\mu)}^{p(1-\alpha)} ,
\]
and the assertion follows. 
\end{proof}

\begin{rem}
The assumption that $\med_\mu(h) = 0$ in the above propositions may easily be replaced by other standard normalizations, such as $\int h d\mu = 0$ (assuming that $h$ is integrable), leading to an additional factor depending on $\alpha$ in the resulting inequalities. See e.g. the proof of \cite[Lemma 2.1]{EMilman-RoleOfConvexity}.
\end{rem}

\subsection{Functional Inequalities for $N < 0$}

Using Propositions \ref{prop:func-equiv}, \ref{prop:weak-Sobolev} and \ref{prop:weak-Nash}, we may reformulate the results of the previous subsection in a functional language. Since the case $N \geq n$ has already been treated in the literature (e.g. \cite{EMilman-RoleOfConvexity}), we  restrict out attention to the case of negative generalized dimension. The first assertion was obtained by Bobkov in \cite[Section 8]{BobkovConvexHeavyTailedMeasures} in the Euclidean setting.  

\begin{thm} \label{thm:main-functional}
Assume that $(M^n,g,\mu)$ satisfies the $\CD(0,N)$ condition, $N < 0$. Recall that $D_{Che,N} = D_{Che,N}(M,g,\mu) > 0$ by Corollary \ref{cor:Isop-Conc-On-NSpace}. Then $(M,g,\mu)$ satisfies the following:
\begin{enumerate}
\item 
Nash-type inequalities: for any essentially bounded locally Lipschitz function $f : (M,g,\mu) \rightarrow \Real$ with $\med_\mu(f) = 0$ and $p \geq 1$:
\[
\norm{f}_{L^p(\mu)} \leq D_{Che,N}^{-\frac{N}{N-p}} 2^{-\frac{1}{N-p}}  p^{\frac{N}{N-p}}  \norm{\abs{\nabla f}}_{L^p(\mu)}^{\frac{N}{N-p}}  \norm{f}_{L^\infty(\mu)}^{-\frac{p}{N-p}} ~.
\]
\item Weak Sobolev inequalities: for any locally Lipschitz function $f : (M,g) \rightarrow \Real$ with $\med_\mu(f) = 0$, we have:
\begin{equation} \label{eq:best-weak-Sobolev}
\norm{f}_{L^{p,p \frac{N-1}{N}}(\mu)} \leq C_{p,q} \norm{\abs{\nabla f}}_{L^{q,p \frac{N-1}{N}}(\mu)} , 
\end{equation}
for any $p,q$ satisfying $\frac{N}{N-1} \leq p \leq -N$ and $\frac{1}{q} = \frac{1}{p} + \frac{1}{N}$, with:
\begin{equation} \label{eq:Cpq}
C_{p,q} \leq D_{Che,N}^{-1}  2^{-\frac{1}{N}} p \brac{\frac{q}{p}}^{\frac{1}{q}} .
\end{equation}
The case $p=\frac{N}{N-1}$ and $q=1$ is a weak Gagliardo inequality.
\item All of the above weak-Sobolev inequalities are equivalent in the following sense: if $C_{p,q}$ denotes the best constant so that (\ref{eq:best-weak-Sobolev}) is satisfies for any locally Lipschitz function $f : (M,g) \rightarrow \Real$ with $\med_\mu(f) = 0$, then:
\[
C_{p_2,q_2} \leq 2^{1 + \frac{2}{p_1}} p_2 \brac{1 + \frac{p_2}{N}}^{-(\frac{1}{p_2} + \frac{1}{N})} C_{p_1,q_1} \;\; , \;\; \text{for all } \;  \frac{1}{q_i} \leq \frac{1}{p_i} + \frac{1}{N} \; , \; i=1,2 . 
\]
\end{enumerate}
\end{thm}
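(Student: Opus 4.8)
The plan is to derive all three parts from the single input $D_{Che,N} := D_{Che,N}(M,g,\mu) > 0$, which is Corollary \ref{cor:Isop-Conc-On-NSpace}, by setting $\alpha := \frac{N}{N-1}$. Since $N<0$ one checks $\alpha \in (0,1)$ and $\frac1\alpha = \frac{N-1}{N}$ (equivalently $\frac1\alpha - 1 = -\frac1N$). Thus, by the definition (\ref{eq:def-DCheN}), $(M,g,\mu)$ satisfies statement (1) of Proposition \ref{prop:func-equiv} with this $\alpha$ and $D=D_{Che,N}$, hence all of that proposition applies in the regime $\alpha\in(0,1)$. Dropping the non-negativity hypothesis at the cost of the $2^{1-1/\alpha}$ factor, $(M,g,\mu)$ satisfies the hypothesis (\ref{eq:assump-weak-Nash}) of Proposition \ref{prop:weak-Nash} with constant $C = D_{Che,N}^{-\alpha}2^{1-\alpha}$, and the hypothesis (\ref{eq:assump-weak-Sobolev}) of Proposition \ref{prop:weak-Sobolev} with constant $C = D_{Che,N}^{-1}2^{1/\alpha-1} = D_{Che,N}^{-1}2^{-1/N}$.

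For Part (1), I feed $C = D_{Che,N}^{-\alpha}2^{1-\alpha}$ into Proposition \ref{prop:weak-Nash}: substituting $\alpha = \frac{N}{N-1}$ gives $\beta = \frac{\alpha}{\alpha+p(1-\alpha)} = \frac{N}{N-p}$, $1-\beta = \frac{-p}{N-p}$, $\frac\beta\alpha = \frac{N-1}{N-p}$ and $(1-\alpha)\frac\beta\alpha = \frac{-1}{N-p}$, so that $C^{\beta/\alpha}p^\beta = D_{Che,N}^{-N/(N-p)}\,2^{-1/(N-p)}\,p^{N/(N-p)}$, which is exactly the asserted constant. For Part (2), I feed $C = D_{Che,N}^{-1}2^{-1/N}$ into Proposition \ref{prop:weak-Sobolev}: since $\frac1\alpha - 1 = -\frac1N$, the constraint $\frac1p-\frac1q = \frac1\alpha-1$ reads $\frac1q = \frac1p+\frac1N$, the exponent $\frac p\alpha$ equals $p\frac{N-1}{N}$, the range $\alpha\le p\le\frac{\alpha}{1-\alpha}$ becomes $\frac{N}{N-1}\le p\le -N$, and the constant $Cp(q/p)^{1/q}$ becomes $D_{Che,N}^{-1}2^{-1/N}p(q/p)^{1/q}$, i.e.\ (\ref{eq:Cpq}); the endpoint $p=\frac{N}{N-1}$ forces $q=1$ and $p\frac{N-1}{N}=1$, giving the weak Gagliardo inequality $\|f\|_{L^{N/(N-1),1}(\mu)}\le C\|\nabla f\|_{L^1(\mu)}$.

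For Part (3), the ``forward'' direction is essentially contained in Part (2). Given $(p_2,q_2)$ with $\frac1{q_2}\le\frac1{p_2}+\frac1N$, let $\tilde q_2$ be defined by $\frac1{\tilde q_2}=\frac1{p_2}+\frac1N$; Part (2) gives the weak Sobolev inequality for $(p_2,\tilde q_2)$ with constant $D_{Che,N}^{-1}2^{-1/N}p_2(\tilde q_2/p_2)^{1/\tilde q_2}$, and since $q_2\ge\tilde q_2$ and $\mu$ is a probability measure, $\|\nabla f\|_{L^{\tilde q_2,r_2}(\mu)}\le\|\nabla f\|_{L^{q_2,r_2}(\mu)}$ by the Lorentz-index monotonicity recorded after Definition \ref{def:Lorentz}; invoking the identity $(q/p)^{1/q} = (1+p/N)^{-1/q}$ (valid whenever $\frac1q=\frac1p+\frac1N$) this yields $C_{p_2,q_2}\le D_{Che,N}^{-1}2^{-1/N}p_2(1+\tfrac{p_2}{N})^{-(\frac1{p_2}+\frac1N)}$. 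It thus remains to prove the ``reverse'' quantitative bound $D_{Che,N}^{-1}\le 2^{1+1/N+2/p_1}\,C_{p_1,q_1}$ for every admissible $(p_1,q_1)$ (assuming, as we may, $C_{p_1,q_1}<\infty$), and this is where the real work lies.

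My plan for the reverse bound is \emph{not} to attempt to recover an isoperimetric inequality directly, but to extract a crude concentration inequality and invoke Theorem \ref{thm:Isop-Conc-NSpace}(3). Given a Borel set $A$ with $\mu(A)\ge1/2$ and $\eps>0$, I apply the weak Sobolev inequality for $(p_1,q_1)$ to the $1/\eps$-Lipschitz, zero-median function $f_\eps := \min(\dist(\cdot,A)/\eps,1)$ (writing $r_1 := p_1\frac{N-1}{N}$ for the second Lorentz index appearing in (\ref{eq:best-weak-Sobolev})): since $\{f_\eps\ge1\} = M\setminus A_\eps$ and $|\nabla f_\eps|\le\frac1\eps 1_{A_\eps}$ a.e., one has $\|f_\eps\|_{L^{p_1,r_1}(\mu)} \ge \|f_\eps\|_{L^{p_1,\infty}(\mu)} \ge (1-\mu(A_\eps))^{1/p_1}$ and, using $\|1_{A_\eps}\|_{L^{q_1,r_1}(\mu)} = \mu(A_\eps)^{1/q_1}$, $\|\nabla f_\eps\|_{L^{q_1,r_1}(\mu)} \le \frac1\eps\mu(A_\eps)^{1/q_1}\le\frac1\eps$ — it is this crude estimate on the gradient term that sidesteps all difficulty when $q_1>1$. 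Hence $1-\mu(A_\eps)\le(C_{p_1,q_1}/\eps)^{p_1}$, so $\K(\eps)\le(C_{p_1,q_1}/\eps)^{p_1}$ for all $\eps>0$; choosing $\eps_0 := C_{p_1,q_1}\,4^{1/p_1}$ makes $\K(\eps_0)\le1/4$. Since $\CD(0,N)$ holds with $N<0$, Theorem \ref{thm:Isop-Conc-NSpace}(3) gives $D_{Che,N}\ge 2^{(N-1)/N}\frac{1/2-\K(\eps_0)}{\eps_0}\ge 2^{(N-1)/N}\frac{1/4}{\eps_0} = 2^{(N-1)/N-2-2/p_1}C_{p_1,q_1}^{-1}$, i.e.\ $D_{Che,N}^{-1}\le 2^{1+1/N+2/p_1}C_{p_1,q_1}$, and combining with the forward bound produces exactly $C_{p_2,q_2}\le 2^{1+2/p_1}p_2(1+\tfrac{p_2}{N})^{-(\frac1{p_2}+\frac1N)}C_{p_1,q_1}$. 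The only genuinely delicate point in the whole argument is the reverse step, and in particular the realization that one must route through the concentration/Cheeger equivalence of Theorem \ref{thm:Isop-Conc-NSpace}(3) rather than through a limiting argument testing $f_\eps$ as $\eps\to0$ (which breaks down precisely when $q_1>1$, since the gradient term would then blow up).
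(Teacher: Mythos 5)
Your proposal is correct and follows essentially the same route as the paper: apply Proposition \ref{prop:func-equiv} with $\alpha=\tfrac{N}{N-1}$ (paying the $2^{1-1/\alpha}$ penalty for signed functions) to obtain the weak Gagliardo and Nash inequalities, then feed these into Propositions \ref{prop:weak-Sobolev} and \ref{prop:weak-Nash} with $\beta=\tfrac{N}{N-p}$ for Parts (1)--(2), and for Part (3) extract concentration from the weak $L^{p_1,\infty}$ bound on Lipschitz functions and route back through Theorem \ref{prop:func-equiv} --- wait, through Theorem \ref{thm:Isop-Conc-NSpace}(3) to recover $D_{Che,N}$. The only cosmetic difference is that you instantiate the $1/\eps$-Lipschitz test function $f_\eps=\min(\dist(\cdot,A)/\eps,1)$ directly, while the paper invokes the (equivalent) identity $\K(t)=\sup\set{\mu(\set{f\geq t}) ; f \text{ $1$-Lipschitz}, \med_\mu f=0}$; the constants you obtain in all three parts agree with the paper's.
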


\begin{proof}
By Proposition \ref{prop:func-equiv} with $\alpha = \frac{N}{N-1}$ and the definition of $D_{Che,N}$, we immediately obtain the following weak Gagliardo inequality, asserting that for any locally Lipschitz function $f : (M,g) \rightarrow \Real$ with $\med_\mu(f) = 0$:
\begin{equation} \label{eq:weak-Galiardo}
\int \abs{\nabla f} d\mu \geq D_{Che,N}  2^{\frac{1}{N}} \norm{f}_{L^{\frac{N}{N-1},1}(\mu)} .
\end{equation}
By Proposition \ref{prop:func-equiv}, this is equivalent to the following Nash-type inequality:
\begin{equation} \label{eq:N-weak-Gagliardo}
\norm{f}_{L^1(\mu)} \leq D_{Che,N}^{-\frac{N}{N-1}} 2^{-\frac{1}{N-1}}  (\int \abs{\nabla f} d\mu)^{\frac{N}{N-1}}  \norm{f}_{L^\infty(\mu)}^{-\frac{1}{N-1}} ~,
\end{equation}
for any locally-Lipschitz function with $\med_\mu f = 0$, which by Proposition \ref{prop:weak-Nash} with $\beta = \frac{N}{N-p}$ yields assertion (1).
Assertion (2) follows immediately from (\ref{eq:N-weak-Gagliardo}) by Proposition \ref{prop:weak-Sobolev}. 

To establish Assertion (3), note that:
\[
\norm{f}_{L^{p_1,\infty}(\mu)} \leq \norm{f}_{L^{p_1,p_1 \frac{N-1}{N}}(\mu)} \leq C_{p_1,q_1} \norm{\abs{\nabla f}}_{L^{q_1,p_1 \frac{N-1}{N}}(\mu)} , 
\]
and so for any $1$-Lipschitz function $f$ with $\med_\mu f = 0$:
\[
\sup_{t > 0}  t  \cdot \mu(\set{ \abs{f} \geq t})^{1/p_1} \leq C_{p_1,q_1} .
\]
It is well-known and immediate to verify that:
\[
\K(t) = \sup \set{ \mu( \set{ f \geq t }) \; ; \;  f \text{ is $1$-Lipschitz and } \med_\mu f = 0  } .
\]
Consequently $\K(C_{p_1,q_1} 4^{1/p_1}) \leq 1/4$, and so by invoking Part (3) of Theorem \ref{thm:Isop-Conc-NSpace}, we deduce:
\[
D_{Che,N} \geq \frac{2^{\frac{N-1}{N}}}{4^{1+1/p_1} C_{p_1,q_1}} . 
\]
Plugging this into (\ref{eq:Cpq}) and setting $\frac{1}{q(p_2)} = \frac{1}{p_2} + \frac{1}{N}$, we obtain:
\[
C_{p_2,q_2} \leq C_{p_2 , q(p_2)} \leq \frac{1}{2} 4^{1 + 1/p_1} p_2 \brac{\frac{q(p_2)}{p_2}}^{1/q(p_2)} C_{p_1,q_1} ,
\]
and the assertion now follows after expressing $q(p_2)$ in terms of $p_2$. 
\end{proof}

\subsection{Stability}

The advantage of concentration inequalities over isoperimetric ones is that they are much more robust to perturbation, and so exhibit better stability properties - such properties were obtained in our previous works \cite{EMilman-RoleOfConvexity,EMilmanGeometricApproachPartII,BartheEMilmanConservativeSpins}. Using the $\CD(0,N)$ condition and Theorem \ref{thm:Isop-Conc-NSpace}, these stability properties immediately pass to the isoperimetric level. Since the proof involves a repetition, mutatis mutandis, of the arguments from \cite{EMilman-RoleOfConvexity,EMilmanGeometricApproachPartII,BartheEMilmanConservativeSpins}, we do not repeat the computation, and only state the stability result using a non-explicit function $F_N$. 

Given two Borel probability measures $\mu_1,\mu_2$ on $(M,g)$, denote by $d_{TV}(\mu_1,\mu_2)$ their total-variation distance, by $H(\mu_2 | \mu_1)$ their relative-entropy (or Kullback--Leibler divergence), and by $W_1(\mu_1,\mu_2)$ their Wasserstein distance (we refer to \cite{EMilman-RoleOfConvexity,EMilmanGeometricApproachPartII} for definitions). 

\begin{thm} \label{thm:stability}
Assume that $(M,g,\mu_2)$ satisfies the $\CD(0,N)$ condition, $N \in (-\infty,0) \cup [n,\infty]$, and that either:
\begin{enumerate}
\item $(\int (\frac{d\mu_2}{d\mu_1})^p d\mu_1)^{1/p} \leq L< \infty$ for some $p \in (1,\infty]$  if $\mu_2 \ll \mu_1$; or
\item  $H(\mu_2 | \mu_1) \leq B < \infty$ if $N \in [n,\infty]$ and $\mu_2 \ll \mu_1$; or
\item $W_1(\mu_1,\mu_2) \leq D < \infty$ if $\frac{1}{N} > -1$; or
\item $d_{TV}(\mu_1,\mu_2) \leq 1 - \eps < 1$ and $(M,g)$ is Euclidean. 
\end{enumerate}
Then $D_{Che,N}(M,g,\mu_1) > 0$ implies $D_{Che,N}(M,g,\mu_2) \geq F_N(D_{Che,N}(M,g,\mu_1),\eps,L,p,B,D)$, for some (explicitly computable) positive function $F_N$ depending solely on its arguments and a lower bound on $\frac{1}{N}$. 
\end{thm}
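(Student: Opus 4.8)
The plan is to route everything through \emph{concentration}, exploiting the dichotomy provided by Theorem~\ref{thm:Isop-Conc-NSpace}(3): on a weighted manifold satisfying $\CD(0,N)$, having $D_{Che,N} > 0$ is equivalent to possessing an arbitrarily weak concentration, and moreover $D_{Che,N} \geq 2^{\frac{N-1}{N}} \frac{1/2 - \K(r_0)}{r_0}$ for any single $r_0$ with $\K(r_0) < 1/2$. Since $\CD(0,N)$ is imposed on $\mu_2$ (with $N$ in the range where Theorem~\ref{thm:Isop-Conc-NSpace} applies), it therefore suffices to transfer a \emph{quantitative} weak concentration inequality from $\mu_1$ to $\mu_2$ under each of the four closeness hypotheses; the $\CD(0,N)$ assumption is then invoked only at the last step to upgrade this to the asserted lower bound on $D_{Che,N}(M,g,\mu_2)$, and bookkeeping the constants produced along the way defines the function $F_N$.

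First I would record the input concentration for $\mu_1$: from $D_{Che,N}(M,g,\mu_1) > 0$ and (\ref{eq:N-conc}) one gets an explicit $N$-degree polynomial bound on $\K(M,g,\mu_1)$, and more generally, integrating the isoperimetric inequality $\I(M,g,\mu_1)(v) \geq D_{Che,N}(M,g,\mu_1)\min(v,1-v)^{\frac{N-1}{N}}$ along (\ref{eq:isop-conc}), one controls how fast $\mu_1(A_r) \to 1$ for any Borel set $A$ of positive $\mu_1$-measure. The core of the proof is the transfer step, which I would carry out \emph{mutatis mutandis} as in \cite{EMilman-RoleOfConvexity,EMilmanGeometricApproachPartII,BartheEMilmanConservativeSpins}. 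In case (1): for any Borel $B$, H\"older's inequality gives $\mu_2(B) \leq L\,\mu_1(B)^{1-1/p}$; applied to a set $A$ with $\mu_2(A) \geq 1/2$ this forces $\mu_1(A) \geq (2L)^{-p/(p-1)} > 0$, and applied to $B = M \setminus A_r$ it propagates the polynomial decay of $\mu_1(M\setminus A_r)$ to $\mu_2(M\setminus A_r)$, so that $\K(M,g,\mu_2)(r_0) < 1/2$ for an explicit $r_0$. In case (2) I would use the Donsker--Varadhan variational formula for relative entropy (equivalently a change-of-measure, Marton-type estimate) to convert $H(\mu_2|\mu_1) \leq B$ together with concentration of $\mu_1$ into concentration of $\mu_2$; the restriction to $N \in [n,\infty]$ is what supplies the exponential integrability this argument requires. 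Case (3) is a transport argument: when $1/N > -1$ the degree-$N$ polynomial concentration of $\mu_1$ has finite first moment, hence a $W_1$-bounded essential bulk, and the triangle inequality for $W_1$ then moves a concentration set for $\mu_1$ to one for $\mu_2$ up to an additive loss of order $W_1(\mu_1,\mu_2)$ divided by the relevant gap. Case (4), the Euclidean total-variation case, is the most delicate: writing $d_{TV}(\mu_1,\mu_2) = 1-\eps$ as the existence of a sub-probability measure of mass $\eps$ dominated by both $\mu_1$ and $\mu_2$, one uses the flat structure together with the fact that a $\CD(0,N)$ measure on $\Real^n$ is Borell's $\tfrac1N$-concave (convex) measure, so that bulk sets transform controllably under Minkowski operations, to extract a weak concentration of $\mu_2$ from one of $\mu_1$.

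Finally, in each case I would feed the pair $(r_0, \K(M,g,\mu_2)(r_0))$ with $\K(M,g,\mu_2)(r_0) < 1/2$ into Theorem~\ref{thm:Isop-Conc-NSpace}(3) applied to $\mu_2$, obtaining
\[
D_{Che,N}(M,g,\mu_2) \;\geq\; 2^{\frac{N-1}{N}}\,\frac{\tfrac12 - \K(M,g,\mu_2)(r_0)}{r_0} \;>\; 0 ,
\]
with explicit dependence on $D_{Che,N}(M,g,\mu_1)$, on $\eps,L,p,B,D$, and on a lower bound for $1/N$. The main obstacle is the transfer step, and within it the Euclidean case (4): unlike the classical range $N \in [n,\infty]$, where one has exponential tails to exploit, here only polynomial tails are available, which is precisely what forces the extra hypotheses $1/N > -1$ in (3) and $N \in [n,\infty]$ in (2), and which means the total-variation transfer cannot rely on concentration alone but must genuinely use the convexity of $\mu_2$ guaranteed by $\CD(0,N)$.
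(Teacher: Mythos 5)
Your proposal follows essentially the same route as the paper: transfer a weak concentration inequality from $\mu_1$ to $\mu_2$ under each of the four closeness hypotheses (the paper delegates this transfer to cited results in \cite{BartheEMilmanConservativeSpins,EMilmanGeometricApproachPartII,EMilman-RoleOfConvexity}, respectively a Hölder/change-of-measure estimate for (1), an entropy bound for (2), the $C_{FM}$-stability lemma for (3), and the Euclidean TV argument for (4)), and then feed the resulting pair $(r_0,\K(M,g,\mu_2)(r_0))$ with $\K(M,g,\mu_2)(r_0)<1/2$ into Theorem~\ref{thm:Isop-Conc-NSpace}(3) applied to $\mu_2$, exactly as you say. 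Your reading of where each side hypothesis enters is also correct — in particular that $1/N>-1$ is what makes the degree-$N$ polynomial tail of $\mu_1$ have a finite first moment in case (3), and that the $\CD(0,N)$ assumption on $\mu_2$ is only invoked at the final upgrade step.
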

\begin{proof}
Denote by $\K^i := \K(M,g,\mu_i)$ and $D_{Che,N}^i := D_{Che,N}(M,g,\mu_i)$ the corresponding concentration profiles and $N$-dimensional Cheeger constant.  
\begin{enumerate}
\item 
It was shown in \cite[Proposition 2.2]{BartheEMilmanConservativeSpins} that:
\[
(\int (\frac{d\mu_2}{d\mu_1})^p d\mu_1)^{1/p} \leq L< \infty \;\;\; \Rightarrow \;\;\; \K^2(r) \leq 2 L \K^1(r/2)^{1-1/p} . 
\]
Consequently, (\ref{eq:N-conc}) and Part (3) of Theorem \ref{thm:Isop-Conc-NSpace} yield the assertion. 
\item
This was proved for the weakest case $N=\infty$ in \cite[Theorem 5.7]{EMilmanGeometricApproachPartII}. 
\item 
Denote:
\[
C_{FM}^i := \sup \set{ \int \abs{f} d\mu_i \; ; \; \text{$f :(M,g) \rightarrow \Real$ is $1$-Lipschitz and $\med_{\mu_i} f = 0$} } .
\]
Note that the assumptions that $N \in (-\infty,-1) \cup [n,\infty]$ and $D^1_{Che,N} > 0$ ensure by (\ref{eq:N-conc}) that taking the supremum as above:
\[
C_{FM}^1 = \sup \int_0^\infty \mu_1 \set{ \abs{f} \geq r} dr \leq 2 \int_0^\infty \K_1(r) dr \leq 2 \int_0^\infty \brac{\brac{\frac{1}{2}}^{\frac{1}{N}} - \frac{r D^1_{Che,N}}{N}}_+^N dr < \infty .
\]
On the other hand, since by the Markov-Chebyshev inequality we have $\K_2(r) \leq C^2_{FM} / r$, it follows by Part (3) of Theorem \ref{thm:Isop-Conc-NSpace} that:
\[
D^2_{Che,N} = \sup_{r > 0} 2^{\frac{N-1}{N}} \frac{\frac{1}{2} -\K_2(r)}{r} \geq 2^{\frac{N-1}{N}} \frac{1}{16 C^2_{FM}} . 
\]

Now, it was shown in \cite[Lemma 5.4]{EMilmanGeometricApproachPartII} that:
\[
\abs{C^{FM}_1 - C^{FM}_2} \leq W_1(\mu_1,\mu_2) .
\]
It immediately follows from the above discussion that:
\[
D^2_{Che,N} \geq \frac{2^{\frac{N-1}{N}}}{16 C^2_{FM}} \geq \frac{2^{\frac{N-1}{N}} }{16 (C^1_{FM} + W_1(\mu_1,\mu_2))} \geq F_N(D^1_{Che,N},W_1(\mu_1,\mu_2)) . 
\]
\item
This was shown in \cite[Theorem 5.5]{EMilman-RoleOfConvexity} for the case $N=\infty$, and the proof carries over mutatis mutandis to the general case by employing the results we have obtained in this section. 
\end{enumerate}

\end{proof}

\subsection{Linear Cheeger Constant when $D < \infty$}

Finally, we have:
\begin{thm} \label{thm:linear-cheeger} 
Let $(M^n,g,\mu)$ satisfy the $\CDD(0,N,D)$ condition, with $N \in (-\infty,0] \cup [n,\infty]$ and $D < \infty$. 
Then:
\[
D_{Che,\infty}(M,g,\mu) = \inf_{v \in (0,1)} \frac{\I(M,g,\mu)(v)}{\min(v,1-v)} \geq \frac{1}{D} .
\]
\end{thm}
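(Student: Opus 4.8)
The plan is to reduce the $N$-dimensional bound to the $N=\infty$ case and then invoke the already-established Curvature-Dimension-Diameter isoperimetric inequality. First I would observe that the key comparison is the monotonicity of the $N$-dimensional Cheeger constant in $N$: since $\mu$ is a probability measure and $D<\infty$, the $\CDD(0,N,D)$ condition with $N \in (-\infty,0]$ implies $\CDD(0,\infty,D)$ (this is the standard fact that the generalized Ricci bound $\Ric_{g,\mu,N}\ge 0$ for $N<0$ forces $\Ric_{g,\mu,\infty}\ge 0$, because $\LogHess_{N-n}\Psi$ and $\LogHess_{-n}\Psi$ differ by the sign-definite term $\frac{1}{N-n}\nabla_g\log\Psi\otimes\nabla_g\log\Psi$, whose sign works in our favor when $N-n<0$ and also when $N\ge n$). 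So in all the listed ranges of $N$ we have the $\CDD(0,\infty,D)$ condition at our disposal.

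Next I would apply Part (1) of Theorem \ref{thm:CDD-II} with $\rho=0$, $N=\infty$, $D<\infty$: this gives $\I(M,g,\mu)\ge \GL_{0,\infty,D}$. The model Jacobian here is $J_{H,0,\infty}(t)=\exp(Ht)$, and the profile $\GL_{0,\infty,D}(v)$ is the infimum over $(a,b)\in\Delta_D$ and $H\in\Real$ of $\max\!\big(v/\!\int_{-a}^0 e^{Ht}dt,\ (1-v)/\!\int_0^b e^{Ht}dt\big)$. By Part (2) of Theorem \ref{thm:CDD-II} (the case $N\in[0,\infty]$, or directly since $e^{Ht}$ is log-concave so Bobkov's equality $\J=\I$ applies on the interval) this equals $\J_{0,\infty,D}$, and the corresponding one-dimensional model space is the exponential density on an interval of length $D$. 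The remaining step is the elementary calculation that for the log-concave density $c\,e^{Ht}$ on $[-a,b]$ with $a+b=D$, the half-line isoperimetric profile satisfies $\J(v)\ge \min(v,1-v)/D$, uniformly in $H$ and in the choice of $(a,b)$; indeed for a half-line $A=(-a,s)$ one has $\mu^+(A)=\Psi(s)/\mu([-a,b])$ and $\mu(A)=\int_{-a}^s\Psi/\int_{-a}^b\Psi$, and since $\Psi$ is increasing or decreasing monotonically one checks $\mu^+(A)\ge \min(\mu(A),1-\mu(A))/D$ directly (the extremal configuration being the uniform density $H=0$, giving exactly $1/D$). Dividing through by $\min(v,1-v)$ and taking the infimum over $v\in(0,1)$ yields $D_{Che,\infty}(M,g,\mu)\ge 1/D$.

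The main obstacle, and the only place requiring genuine care rather than bookkeeping, is verifying that the reduction $\CDD(0,N,D)\Rightarrow\CDD(0,\infty,D)$ holds cleanly for $N\in(-\infty,0]$ — one must confirm the sign of the correction term in $\LogHess_{N-n}\Psi$ versus $\LogHess_{-n}\Psi$ points the right way (it does: for $N\le 0 < n$ we have $N-n<0$, so $\frac{1}{N-n}\nabla_g\log\Psi\otimes\nabla_g\log\Psi\le 0$ as a $2$-tensor, hence $\Ric_{g,\mu,\infty}=\Ric_g-\nabla_g^2\log\Psi \ge \Ric_g-\nabla_g^2\log\Psi-\frac{1}{N-n}\nabla_g\log\Psi\otimes\nabla_g\log\Psi=\Ric_{g,\mu,N}\ge 0$), together with noting that the diameter bound $D$ is unaffected. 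Once that is in place, the rest is a direct appeal to Theorem \ref{thm:CDD-II} and the exponential-model computation, which is the content of the known $N=\infty$ Bakry--Ledoux/linear-Cheeger estimate under $\CDD(0,\infty,D)$.
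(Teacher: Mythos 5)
Your proposed reduction from $N\in(-\infty,0]$ to $N=\infty$ is backwards, and this is a fatal gap. Recall
\[
\Ric_{g,\mu,N} \;=\; \Ric_{g,\mu,\infty} \;-\; \frac{1}{N-n}\,\nabla_g\log\Psi\otimes\nabla_g\log\Psi .
\]
For $N\le 0<n$ we have $\frac{1}{N-n}<0$, so $-\frac{1}{N-n}\,\nabla_g\log\Psi\otimes\nabla_g\log\Psi\ge 0$ as a $2$-tensor, and therefore $\Ric_{g,\mu,N}\ge\Ric_{g,\mu,\infty}$ — the \emph{opposite} of what you wrote. Hence $\Ric_{g,\mu,N}\ge 0$ gives you no lower bound at all on $\Ric_{g,\mu,\infty}$, and $\CDD(0,N,D)$ with $N\le 0$ does \emph{not} imply $\CDD(0,\infty,D)$. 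This is exactly the content of (\ref{eq:paradox}): $\CD(\rho,N_1)\Rightarrow\CD(\rho,N_2)$ only when $\frac{1}{N_1-n}\ge\frac{1}{N_2-n}$, and for $N_1\le 0$ one has $\frac{1}{N_1-n}<0=\frac{1}{\infty-n}$, so the implication goes the wrong way. A concrete one-dimensional counterexample: the density $(1+t^2)^{(N-1)/2}$ on $\Real$ satisfies $\CD(0,N)$ for $N<0$ (one computes $-\LogHess_{N-1}\Psi=(1-N)(1+t^2)^{-2}\ge 0$), but is not log-concave on $|t|>1$ and so fails $\CD(0,\infty)$. The entire point of this range is precisely that the class of admissible measures is \emph{strictly larger} than the log-concave ($N=\infty$) class, so there is no hope of reducing to the exponential model on an interval as you propose. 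Your argument is fine only in the already-known range $N\in[n,\infty]$, where indeed $\frac{1}{N-n}\ge 0$ and the reduction to $N=\infty$ holds.

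The paper's proof goes by a genuinely different route: it applies the $\CDD(0,N,D)$ isoperimetric inequality (Case 3 of Corollary \ref{cor:model}) to pass to the heavy-tailed model densities $c_{N,\xi}\,t^{N-1}\mathbf{1}_{[\xi,\xi+D]}(t)$ (not the exponential density), observes these are unimodal, and invokes Bobkov's lemma stating that any unimodal probability measure supported on an interval of length $D$ has $D_{Che,\infty}\ge 1/D$. If you want to keep your general strategy, you would need to replace step 1 by Corollary \ref{cor:model} directly and then do the one-dimensional computation (or cite Bobkov) for the $t^{N-1}$ densities; the $N=\infty$ detour simply does not exist here.
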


\begin{rem}
In the Euclidean setting, this was proved by Bobkov \cite[Section 9]{BobkovConvexHeavyTailedMeasures}, by employing the localization method. 
When $N \in [n,\infty]$, it is not difficult to improve this bound to $\frac{2}{D}$, which is best possible, as witnessed by considering the uniform measure on $[0,D]$; indeed, any $\CDD(0,N,D)$ space for $N$ in that range is in particular a $\CDD(0,\infty,D)$ space, and using the exponential model space described in (\ref{eq:exponential}) below, this amounts to an elementary computation. We do not know if the improved bound also holds when $N \in (-\infty,0]$, as the verification involves a tedious calculation which we did not pursue. 
\end{rem}

\begin{proof}
Assume first that $N < \infty$. The claim then amounts to a direct verification on the isoperimetric lower bound given by Case (3) of Corollary \ref{cor:model} (to handle $N \in (-\infty,0]$) and Case (3) of \cite[Corollary 1.4]{EMilmanSharpIsopInqsForCDD} (to handle $N \in [n,\infty)$). 
Unifying both cases, we have that in the above range of $N$, for all $v \in (0,1)$:
\[
\I(M,g,\mu)(v) \geq \inf_{\xi  > 0} \J( t^{N-1} , [\xi,\xi+D])(v) .
\]
Note that we have omitted the model space of uniform density on $[0,D]$, as it appears as the limiting case as $\xi \rightarrow \infty$. Consequently:
\[
D_{Che,\infty}(M,g,\mu) \geq \inf_{\xi > 0} D_{Che,\infty}([\xi,\xi+D],\abs{\cdot},c_{N,\xi} t^{N-1} dt) ,
\]
where $c_{N,\xi} > 0$ is a normalization constant. In general, the computation of the right-hand side above is rather tedious, and so instead we refer to a very elegant argument of Bobkov (\cite[Lemma 9.2]{BobkovConvexHeavyTailedMeasures}), who showed that any unimodal probability measure $\eta$ supported on an interval $L$ of length $D$ satisfies $D_{Che,\infty}(L,\abs{\cdot},\eta) \geq \frac{1}{D}$. As the measures $c_{N,\xi} t^{N-1} 1_{[\xi,\xi+D]}(t) dt$ are unimodal for all $N$, the result now immediately follows.

The case $N = \infty$ follows by taking the limit, or repeating the argument using Case (7) of \cite[Corollary 1.4]{EMilmanSharpIsopInqsForCDD}, which asserts that a $\CDD(0,\infty,D)$ weighted manifold satisfies:
\begin{equation} \label{eq:exponential}
\I(M^n,g,\mu) \geq \inf_{H \geq 0} \J(\exp(H t),[0,D]) ~.
\end{equation}
\end{proof}

\section{Isoperimetric, Poincar\'e and Concentration inequalities under positive curvature} \label{sec:pos}

\subsection{Poincar\'e inequality} 

Let $(M^n,g,\mu)$ satisfy the $\CD(\rho,N)$ condition with strictly positive curvature $\rho > 0$. 
When $(M,g)$ is compact and $N \in (-\infty,0)$, a Lichnerowicz-type Poincar\'e inequality was obtained by Shin-ichi Ohta \cite{Ohta-NegativeN} (for the case that $\partial M = \emptyset$) and concurrently (and independently) in our previous work with Alexander Kolesnikov \cite{KolesnikovEMilmanReillyPart1} (allowing a locally-convex boundary, i.e. having non-negative second-fundamental form):
\begin{equation} \label{eq:Poincare}
\int f^2 d\mu - (\int f d\mu)^2 \leq C_{Poin}\int \abs{\nabla f}^2 d\mu \;\;\; \text{for all smooth $f : (M,g) \rightarrow \Real$} ,
\end{equation}
where the best constant $C_{Poin} = C_{Poin}(M,g,\mu)$ in (\ref{eq:Poincare}), called the Poincar\'e constant, satisfies:
\begin{equation} \label{eq:C-Poincare}
C_{Poin} \leq \frac{1}{\rho} \frac{N-1}{N} . 
\end{equation}
This extended the Lichnerowicz estimate (the constant density case $N=n$) from the previously known range $N \in [n,\infty]$ (see \cite{KolesnikovEMilmanReillyPart1} and the references therein). 
 In \cite{KolesnikovEMilmanReillyPart1}, we also showed that the estimate (\ref{eq:C-Poincare}) is sharp, for all values of $\rho > 0$, $N \in (-\infty,-1] \cup [n,\infty]$ and $n\geq 1$. Note that the sharpness was not shown for $N \in (-1,0)$.

Up to a constant, this Poincar\'e inequality also follows from our isoperimetric analysis. Moreover, our analysis also extends to the range $[0,1)$, and shows that the constant $\frac{N-1}{N}$ in (\ref{eq:C-Poincare}) cannot be sharp as $N < 0$ increases to $0$, since given $\rho > 0$, the Poincar\'e constant $C_{Poin}$ remains uniformly bounded in $N \in [-1,1-\eps]$. Our argument is based on the celebrated Maz'ya--Cheeger inequality \cite{MazyaCheegersInq1,CheegerInq}, asserting that a linear Cheeger isoperimetric inequality always implies a Poincar\'e inequality with:
\begin{equation} \label{eq:Cheeger}
C_{Poin}(M,g,\mu) \leq \frac{4}{D^2_{Che,\infty}(M,g,\mu)} . 
\end{equation}

\begin{thm} \label{thm:positive-curvature}
Let $(M^n,g,\mu)$ satisfy the $\CD(\rho,N)$ condition with $\rho > 0$ and $N \in (-\infty,1)$. Then:
\begin{enumerate}
\item $D_{Che,\infty}(M,g,\mu) \geq \sqrt{\frac{\rho}{1-N}} \frac{1}{\int_0^\infty \cosh^{N-1}(t) dt} \geq c \sqrt{\rho} \min(1,\sqrt{1-N})$.
\item $C_{Poin}(M,g,\mu) \leq 4 \frac{1-N}{\rho} (\int_0^\infty \cosh^{N-1}(t) dt)^2 \leq \frac{4}{c^2} \frac{1}{\rho} \max(1,\frac{1}{1-N})$. 
\end{enumerate}
Under the $\CDD(0,N,D)$ condition, with $N \in (-\infty,1) \cup [n,\infty]$:
\begin{enumerate}
\setcounter{enumi}{2}
\item $C_{Poin}(M,g,\mu) \leq 4 D^2$. 
\end{enumerate}
Here $c>0$ is an (explicitly computable) numeric constant. 
\end{thm}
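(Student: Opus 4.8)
The plan is to deduce all three parts from the one-dimensional model-space descriptions already in hand --- Case 1 of Corollary \ref{cor:model} for parts (1)--(2) and Part 1 of Theorem \ref{thm:CDD-II} for part (3) --- combined with the Maz'ya--Cheeger inequality (\ref{eq:Cheeger}), $C_{Poin}\leq 4/D_{Che,\infty}^2$, and the trivial monotonicity of $D_{Che,\infty}$ under pointwise domination of isoperimetric profiles.

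\emph{Parts (1) and (2).} Since $\rho>0$ and $N<1$, we have $\delta:=\rho/(N-1)<0$ with $-\delta=\rho/(1-N)$, so Case 1 of Corollary \ref{cor:model} (the case $D=\infty$) gives $\I(M,g,\mu)\geq\J(\cosh(\sqrt{-\delta}\,t)^{N-1},\Real)$. I would then record that the density $\cosh(\sqrt{-\delta}\,t)^{N-1}$ is \emph{log-concave} --- its logarithm is $(N-1)\log\cosh(\sqrt{-\delta}\,t)$, which is concave because $N-1<0$ and $\log\cosh$ is convex --- so by Bobkov \cite[Proposition 2.1]{BobkovExtremalHalfSpaces} this half-line profile equals the full isoperimetric profile of the symmetric probability measure $\mu_0$ on $\Real$ with that normalized density. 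Since the isoperimetric profile of a one-dimensional log-concave measure is concave on $(0,1)$ (a classical fact), it follows, using $\I\geq 0$, that $\I(\Real,\abs{\cdot},\mu_0)(v)\geq 2\,\I(\Real,\abs{\cdot},\mu_0)(1/2)\min(v,1-v)$ with equality at $v=1/2$; hence $D_{Che,\infty}(M,g,\mu)\geq D_{Che,\infty}(\Real,\abs{\cdot},\mu_0)=2\,\I(\Real,\abs{\cdot},\mu_0)(1/2)$. By symmetry the half-line $(-\infty,0)$ realizes the profile at $v=1/2$, so $\I(\Real,\abs{\cdot},\mu_0)(1/2)=Z^{-1}$ with $Z:=\int_\Real\cosh(\sqrt{-\delta}\,t)^{N-1}\,dt=\frac{2}{\sqrt{-\delta}}\int_0^\infty\cosh^{N-1}(s)\,ds$ (substitute $s=\sqrt{-\delta}\,t$), which is exactly the first asserted inequality after noting $\sqrt{-\delta}=\sqrt{\rho/(1-N)}$. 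For the second inequality I would put $m:=1-N>0$ and split $\int_0^\infty\cosh^{-m}(s)\,ds=\int_0^1+\int_1^\infty$, using $\cosh s\geq e^{c_0 s^2}$ on $[0,1]$ (for a fixed $c_0>0$) and $\cosh s\geq e^s/2$ on $[1,\infty)$ to reduce to a Gaussian and an exponential integral; this gives $\int_0^\infty\cosh^{-m}(s)\,ds\leq C\max(m^{-1/2},m^{-1})$ for a numeric $C$, and rearranging yields the bound with $c=1/C$. Part (2) is then immediate from part (1) and (\ref{eq:Cheeger}).

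\emph{Part (3).} Now $\rho=0$, $D<\infty$ and $N\in(-\infty,1)\cup[n,\infty]$, so Part 1 of Theorem \ref{thm:CDD-II} gives $\I(M,g,\mu)\geq\GL_{0,N,D}$, and it suffices to prove $\GL_{0,N,D}(v)\geq\min(v,1-v)/D$ for every $v\in(0,1)$ --- this forces $D_{Che,\infty}(M,g,\mu)\geq 1/D$, whence $C_{Poin}\leq 4D^2$ by (\ref{eq:Cheeger}). The key point is that $J_{H,0,N}(t)=\brac{\brac{1+\frac{H}{N-1}t}_+}^{N-1}$ (read as $\exp(Ht)$ when $N=\infty$) satisfies $J_{H,0,N}(t)\leq 1$ whenever $H\geq 0$ and $t\leq 0$: indeed $\frac{H}{N-1}t\geq 0$ if $N<1$ while $\frac{H}{N-1}t\leq 0$ if $N>1$, and in both cases raising the relevant base to the power $N-1$ produces a number $\leq 1$. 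Consequently, for $H\geq 0$ we get $\int_{-a}^0 J_{H,0,N}(t)\,dt\leq a\leq D$, so the first entry of the maximum defining $\GL_{0,N,D}(v)$ is $\geq v/D\geq\min(v,1-v)/D$; by the symmetry $J_{-H,0,N}(t)=J_{H,0,N}(-t)$, for $H\leq 0$ the same reasoning on $[0,b]$ bounds the second entry below by $(1-v)/D\geq\min(v,1-v)/D$. Since every $H\in\Real$ lies in one of the two cases, taking the infimum over $H$ and over $(a,b)\in\Delta_D$ proves the bound. (When $N\in(-\infty,0]\cup[n,\infty]$ one may instead quote Theorem \ref{thm:linear-cheeger}; the merit of the direct argument is that it also covers the previously excluded range $N\in(0,1)$.)

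I do not expect a genuine obstacle: the statement is essentially a readout of model spaces already identified. The two points that need a little care are the elementary estimate $\int_0^\infty\cosh^{-m}(s)\,ds\leq C\max(m^{-1/2},m^{-1})$ in part (1) and, in part (3), the range $N\in(0,1)$ where Theorem \ref{thm:linear-cheeger} does not apply and the direct computation with $\GL_{0,N,D}$ above must be carried out; both are routine.
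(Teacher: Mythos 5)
Parts (1) and (2) of your argument coincide with the paper's: read off Case 1 of Corollary \ref{cor:model}, observe that $\cosh^{N-1}(\sqrt{-\delta}\,t)$ is log-concave (so the Bobkov identification $\I=\J$ applies and the half-line profile is the full profile), use (weak) concavity of the one-dimensional profile to locate the infimum at $v=1/2$, compute the normalizing constant, and estimate $\int_0^\infty\cosh^{N-1}(t)\,dt$ by a two-regime comparison. Your elementary estimate of the cosh integral differs only cosmetically from the paper's Lemma \ref{lem:cosh-calc}.

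For part (3) your route is genuinely different and, in fact, more complete than the paper's. The paper simply cites Theorem \ref{thm:linear-cheeger}, whose proof is based on the \emph{flat} profile $\J_{0,N,D}$ (Case 3 of Corollary \ref{cor:model}) together with Bobkov's unimodal lemma, and which is only stated for $N\in(-\infty,0]\cup[n,\infty]$; the range $N\in(0,1)$ of the theorem statement is therefore not covered by that citation. Your argument instead works directly with the Gromov--L\'evy profile $\GL_{0,N,D}$ (Part 1 of Theorem \ref{thm:CDD-II}), which is available for the whole range $N\in(-\infty,1)\cup[n,\infty]$, and replaces the unimodality input with the purely elementary pointwise bound $J_{H,0,N}(t)\leq 1$ for $\sign(H)\,t\leq 0$ (valid for every $N\neq 1$, including $N\in(0,1)$, and for $N=\infty$). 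This gives $\GL_{0,N,D}(v)\geq\min(v,1-v)/D$ for all $v\in(0,1)$, hence $D_{Che,\infty}\geq 1/D$ and the Poincar\'e bound via (\ref{eq:Cheeger}). In short: what you lose is the crisp appeal to an existing lemma; what you gain is an argument that actually covers the stated range, avoids the detour through the flat profile (which need not agree with $\GL$ when $N\in(0,1)$ and $D<\infty$), and requires nothing beyond a one-line inequality on the model Jacobians.
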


\begin{rem} \label{rem:Poincare}
It should be possible to show that Poincar\'e constant of $(M^n,g,\mu)$ satisfying $\CD(\rho,N)$ with $\rho > 0$ and $N \in (-\infty,1) \cup [n,\infty]$ is always majorized by the Poincar\'e  constant of the one-dimensional model space from Case (1) of Corollary \ref{cor:model}, having density $\cosh^{N-1}(\sqrt{-\delta}t)$ as in (\ref{eq:cosh-model}) below. This is indeed the case when $N \in (-\infty,-1] \cup [n,\infty]$, since in that range, it was verified in \cite{KolesnikovEMilmanReillyPart1} that the Poincar\'e constant of our model space is indeed $\frac{1}{\rho} \frac{N-1}{N}$; as explained above, this is no longer the case when $N$ approaches $0$. We mention that in \cite{KlartagLocalizationOnManifolds}, Klartag has already shown that the Poincar\'e constant of $(M^n,g,\mu)$ satisfying $\CDD(\rho,N,D)$  is always majorized by the Poincar\'e constant of the ``worst" one-dimensional density satisfying the $\CDD(\rho,N,D)$ condition, so it remains to establish that our model density is indeed the worst one when $\rho > 0$ and $D=\infty$; we do not pursue this direction here.
\end{rem}

\begin{proof}[Proof of Theorem \ref{thm:positive-curvature}]
By Case (1) of Corollary \ref{cor:model}:
\begin{equation} \label{eq:cosh-model}
D_{Che,\infty}(M,g,\mu) \geq D := D_{Che,\infty}(\Real,\abs{\cdot}, C^{-1}_{N,\rho} \cosh^{N-1}(\sqrt{-\delta}t) dt) \;\; , \;\; \delta = \frac{\rho}{N-1} ,
\end{equation}
where $C_{N,\rho} = \int_{-\infty}^\infty \cosh^{N-1}(\sqrt{-\delta}t) dt$ is a normalization constant. Since the density $\cosh^{N-1}(\sqrt{-\delta} t)$ is log-concave when $N < 1$, a result of Bobkov \cite{BobkovExtremalHalfSpaces} for measures on $\Real$ asserts that $\I = \I(\cosh^{N-1}(\sqrt{-\delta}t), \Real) = \J(\cosh^{N-1}(\sqrt{-\delta}t), \Real)$ is concave, and hence $D = \inf_{v \in (0,1)} \I(v) / \min(v,1-v)$ is attained at $v=1/2$ (alternatively, for a self-contained argument, use the weak concavity of $\I$ asserted in Part (1) of Theorem \ref{thm:Isop-Conc-NSpace}). Consequently:
\[
D_{Che,\infty}(M,g,\mu) \geq D = 2 \I(1/2) =  \frac{2}{C_{N,\rho}} = \frac{\sqrt{-\delta}}{\int_0^\infty \cosh^{N-1}(t) dt} ,
\]
and the first assertion is proved; the estimate on $\int_0^\infty \cosh^{N-1}(t) dt$ is proved in Lemma \ref{lem:cosh-calc} below. The second assertion follows immediately by the Maz'ya--Cheeger inequality (\ref{eq:Cheeger}). The third assertion follows by Theorem \ref{thm:linear-cheeger} coupled with (\ref{eq:Cheeger}). 
\end{proof}

To conclude the proof and for the subsequent analysis, we require the following:

\begin{lem} \label{lem:cosh-calc}
\begin{enumerate}
The following estimates hold:
\item For all $t \geq 0$:
\[
\exp\brac{\frac{\min(t^2/2,t)}{\cosh^2 2}} \leq \cosh(t) \leq \exp(\min(t^2 / 2 , t)) . \]
\item For $N < 1$:
\[
\max\brac{\sqrt{\frac{\pi}{2}} \frac{1}{\sqrt{1-N}},\frac{1}{1-N}} \leq \int_0^\infty \cosh^{N-1}(t) dt \leq \sqrt{\frac{\pi}{2}} \frac{\cosh 2}{\sqrt{1-N}} +\frac{\cosh^2 2}{1-N} .
\]
\end{enumerate}
\end{lem}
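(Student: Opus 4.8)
The plan is to establish Part (1) first and then deduce Part (2) by comparison under the integral sign.

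For Part (1), the upper bound $\cosh t \le \exp(\min(t^2/2,t))$ splits into the two separate elementary estimates $\cosh t \le e^{t^2/2}$ and $\cosh t \le e^t$ (for $t\ge 0$): the first follows by a term-by-term comparison of the power series, using $(2k)! \ge 2^k k!$ for all $k\ge 0$ (the quotient of the $k$-th coefficients), and the second is immediate from $e^{-t}\le 1 \le e^t$. The lower bound $\cosh t \ge \exp(\min(t^2/2,t)/\cosh^2 2)$ is the only genuinely delicate step, and I would prove it by a two-regime analysis of $g(t):=\log\cosh t$, noting $g(0)=0$, $g'(t)=\tanh t$ and $g''(t)=1/\cosh^2 t$. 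On $[0,2]$, writing $g(t)=\int_0^t\int_0^s (1/\cosh^2 u)\,du\,ds$ and using the monotonicity bound $1/\cosh^2 u \ge 1/\cosh^2 2$ for $u\in[0,2]$ gives $g(t)\ge t^2/(2\cosh^2 2)$, which is exactly $\min(t^2/2,t)/\cosh^2 2$ in this range. On $[2,\infty)$, starting from $g(2)\ge 2/\cosh^2 2$ (the $t=2$ instance of the previous bound) and integrating $g'(s)=\tanh s\ge \tanh 2$, one gets $g(t)\ge 2/\cosh^2 2+(t-2)\tanh 2$; since $t-2\ge 0$, this is $\ge t/\cosh^2 2$ precisely because $\tanh 2 \ge 1/\cosh^2 2$, i.e. $\sinh 2\cosh 2\ge 1$, i.e. $\sinh 4\ge 2$, which clearly holds. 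Matching the two regimes at $t=2$ finishes Part (1).

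For Part (2), set $\beta := 1-N>0$, so that $\int_0^\infty \cosh^{N-1}(t)\,dt = \int_0^\infty \cosh^{-\beta}(t)\,dt$. The two lower bounds come from feeding in the upper bounds on $\cosh$ from Part (1): from $\cosh t\le e^{t^2/2}$ we get $\cosh^{-\beta} t\ge e^{-\beta t^2/2}$, whence $\int_0^\infty \cosh^{-\beta}\ge \int_0^\infty e^{-\beta t^2/2}\,dt = \sqrt{\pi/(2\beta)} = \sqrt{\pi/2}\,(1-N)^{-1/2}$; and from $\cosh t\le e^t$ we get $\cosh^{-\beta} t\ge e^{-\beta t}$, whence $\int_0^\infty \cosh^{-\beta}\ge \int_0^\infty e^{-\beta t}\,dt = 1/\beta = (1-N)^{-1}$. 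Taking the larger of the two gives the claimed lower bound. For the upper bound I would use the lower bound on $\cosh$ from Part (1), namely $\cosh^{-\beta} t\le \exp(-\beta\min(t^2/2,t)/\cosh^2 2)$, and split the integral at $t=2$: on $[0,2]$ the integrand is at most $e^{-\beta t^2/(2\cosh^2 2)}$, whose integral over $[0,\infty)$ equals $\sqrt{\pi/2}\,\cosh 2\,(1-N)^{-1/2}$; on $[2,\infty)$ it is at most $e^{-\beta t/\cosh^2 2}$, whose integral is $(\cosh^2 2/\beta)\,e^{-2\beta/\cosh^2 2}\le \cosh^2 2\,(1-N)^{-1}$. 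Adding the two pieces yields the stated upper bound, completing the proof.
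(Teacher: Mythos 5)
Your proof is correct, and it follows essentially the same route as the paper: Part (1) via a term-by-term power-series comparison for the upper bounds and a two-regime lower bound for $\log\cosh$ based on its second derivative, Part (2) by feeding these pointwise bounds into the integral $\int_0^\infty \cosh^{-\beta}$ and splitting at $t=2$. The only small divergence is on $[2,\infty)$ in Part (1): the paper bounds $(\log\cosh)''\geq 0$ there and integrates the piecewise-constant minorant of $(\log\cosh)''$ twice from the origin (giving $g(t)\geq (2t-2)/\cosh^2 2 \geq t/\cosh^2 2$), whereas you bound $g'(s)=\tanh s\geq\tanh 2$ and invoke the auxiliary inequality $\tanh 2\geq 1/\cosh^2 2$ (i.e.\ $\sinh 4\geq 2$); both are elementary and correct, so this is a cosmetic rather than substantive difference.
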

\begin{proof}
The estimate $\cosh(t) \leq \exp(t)$ is obvious, and $\cosh(t) \leq \exp(t^2 / 2)$ is verified by comparing coefficients of the corresponding Taylor series. 
As for the inequality in the other direction, observe that:
\[
(\log \cosh)''(t) = \frac{1}{\cosh^2 t} \geq \begin{cases} \frac{1}{\cosh^2 2} & t \in [0,2]\\ 0 & t \in [2,\infty)  \end{cases} ,
\]
and so the lower bound on $\cosh(t)$ follows by integrating this function twice. The second assertion immediately follows from the first. 
\end{proof}

\subsection{Two-level Behaviour}

A weighted manifold satisfying $\CD(\rho,N)$ with $\rho > 0$ and $N \in (-\infty,1)$ has very interesting concentration properties, as described below. 

\begin{prop} \label{prop:two-level-conc}
Let $(M^n,g,\mu)$ satisfy $\CD(\rho,N)$ with $\rho > 0$ and $N \in (-\infty,1)$, let $\K = \K(M,g,\mu)$ denote its concentration profile. Set $\delta = \frac{\rho}{N-1}$, and let $\K_0 = \K(\Real,\abs{\cdot},C^{-1}_{N,\rho} \cosh^{N-1}(\sqrt{-\delta} t) dt)$ denote the concentration profile of our model space. Then for any $r> 0$:
\[
\K(r) \leq \K_0(r)  = \frac{\int_{\sqrt{-\delta} r}^\infty \cosh^{N-1}(t) dt}{2 \int_0^\infty \cosh^{N-1}(t) dt } \leq 
\begin{cases}   
C \min(1,\sqrt{1-N}) \frac{\exp(- c \rho r^2)}{1 + \sqrt{\rho} r }  & r \in \left [ 0,\sqrt{\frac{1-N}{\rho}} \right ] \\ C \min(1, \frac{1}{\sqrt{1-N}}) \exp(- c \sqrt{1-N} \sqrt{\rho} r) & \text{otherwise} 
\end{cases} ,
\]
where $c , C > 0$ are numeric constants. 
\end{prop}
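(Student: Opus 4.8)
The plan is to prove the statement in three parts: the middle identity for $\K_0$ (an explicit computation for the one-dimensional log-concave model), the comparison $\K\le\K_0$ (a consequence of the isoperimetric comparison already in hand), and the two-level tail bound on $\K_0$ (an elementary but bookkeeping-heavy estimate of a ratio of integrals via Lemma~\ref{lem:cosh-calc}). For the first two: the model probability measure $\mu_0$ with density $C_{N,\rho}^{-1}\cosh^{N-1}(\sqrt{-\delta}\,t)$ is symmetric about the origin and, since $N<1$, log-concave, so by Bobkov's theorem \cite{BobkovExtremalHalfSpaces} its flat and isoperimetric profiles agree, $\I_0:=\I(\cosh^{N-1}(\sqrt{-\delta}\,t),\Real)=\J(\cosh^{N-1}(\sqrt{-\delta}\,t),\Real)$, and half-lines solve its isoperimetric problem. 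Taking $A=(-\infty,0]$, which has $\mu_0(A)=1/2$ and $A_r=(-\infty,r)$, gives $\K_0(r)\ge1-\mu_0(A_r)=\mu_0([r,\infty))$; conversely, substituting $s=\mu_0([t,\infty))$ in $\int_v^{1/2}ds/\I_0(s)$ shows this integral equals the unique $r$ with $\mu_0([r,\infty))=v$, so (\ref{eq:isop-conc}) forces $\K_0(r)\le\mu_0([r,\infty))$; thus $\K_0(r)=\mu_0([r,\infty))$, which after substituting $s=\sqrt{-\delta}\,t$ is exactly the displayed middle expression. For the comparison, Case~(1) of Corollary~\ref{cor:model} gives $\I(M,g,\mu)\ge\J(\cosh^{N-1}(\sqrt{-\delta}\,t),\Real)=\I_0$, and feeding this pointwise larger profile into (\ref{eq:isop-conc}) yields $\K^{-1}(v)\le\int_v^{1/2}ds/\I(s)\le\int_v^{1/2}ds/\I_0(s)=\K_0^{-1}(v)$, i.e.\ $\K\le\K_0$.

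For the two-level tail bound, write $m:=1-N>0$ and $u:=\sqrt{-\delta}\,r=\sqrt{\rho/m}\,r$, so that $\rho r^2=mu^2$, $\sqrt m\sqrt\rho\,r=mu$ and $\sqrt\rho\,r=\sqrt m\,u$. Raising Lemma~\ref{lem:cosh-calc}(1) to the (negative) power $-m$ gives, for $t\ge0$,
\[
\exp\!\big(-m\min(t^2/2,t)\big)\ \le\ \cosh^{N-1}(t)\ \le\ \exp\!\big(-\tfrac{m}{\cosh^2 2}\min(t^2/2,t)\big).
\]
The lower bound combined with Lemma~\ref{lem:cosh-calc}(2) shows $2\int_0^\infty\cosh^{N-1}(t)\,dt\asymp\max(1/m,1/\sqrt m)$, so its reciprocal is $\asymp\min(m,\sqrt m)=\sqrt m\,\min(1,\sqrt m)$. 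For the numerator I would use the upper bound on $\cosh^{N-1}$ and split $\int_u^\infty$ at $t=2$, where $\min(t^2/2,t)$ changes from $t^2/2$ to $t$: with $a:=m/\cosh^2 2$, for $u\le2$ one has $\int_u^\infty\cosh^{N-1}(t)\,dt\le\int_u^\infty e^{-at^2/2}\,dt+\tfrac1ae^{-2a}$, while for $u\ge2$ only the exponential term $\tfrac1ae^{-au}$ is present. Plugging in the elementary Gaussian tail estimate $\int_u^\infty e^{-at^2/2}\,dt\lesssim\frac{1}{\sqrt a\,(1+\sqrt a\,u)}e^{-au^2/2}$ and using $\sqrt a\,u\asymp\sqrt\rho\,r$, $au^2\asymp\rho r^2$, $au\asymp\sqrt m\sqrt\rho\,r$, multiplication by the reciprocal of the normalization converts the Gaussian piece to $\lesssim\min(1,\sqrt m)\,e^{-c\rho r^2}/(1+\sqrt\rho\,r)$ on $r\in[0,\sqrt{m/\rho}]$ and the leftover exponential piece to $\lesssim\min(1,1/\sqrt m)\,e^{-c\sqrt m\sqrt\rho\,r}$ for $r\ge\sqrt{m/\rho}$ (the discrepancy between the split point $t=2$ and the stated threshold being absorbed in the constants); together with the trivial $\K_0\le1/2$ this produces both asserted cases.

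The first two parts are routine given the isoperimetric comparison and Bobkov's result; the work lies in the tail estimate, where the difficulty is organizational rather than conceptual. One must handle the four regimes ($m=1-N$ small or large, $r$ small or large) simultaneously so that both the polynomial factor $1/(1+\sqrt\rho\,r)$ and the dimensional prefactors $\min(1,\sqrt{1-N})$ and $\min(1,1/\sqrt{1-N})$ come out as stated; and one must check that the ``cross-term'' $\tfrac1ae^{-2a}$ coming from the far, linear part of $\min(t^2/2,t)$ is not dominant while $r$ is still in the quadratic regime, which is precisely where the trivial bound $\K_0\le1/2$ is invoked to dispose of the remaining small-$m$ corner.
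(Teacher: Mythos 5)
Your treatment of the first two parts is correct and matches the paper's argument: the exact formula $\K_0(r)=\mu_0([r,\infty))$ via extremality of half-lines in the symmetric log-concave model (equivalently, the change of variables $v=F(t)$ in (\ref{eq:isop-conc}) gives $\K_0^{-1}(v)\le F^{-1}(v)$, which combined with the trivial lower bound from $A=(-\infty,0]$ yields equality), and the comparison $\K\le\K_0$ from Case~(1) of Corollary~\ref{cor:model} substituted into (\ref{eq:isop-conc}). The paper performs exactly this change of variables.

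The gap is in the tail estimate, and it is precisely the cross-term you flag. Write $m:=1-N$. Your attempt to ``dispose of the remaining small-$m$ corner'' by invoking $\K_0\le1/2$ does not accomplish this: that trivial bound would only help if the target expression were $\ge1/2$, but on the Gaussian regime $r\in[0,\sqrt{m/\rho}]$ the target is $C\min(1,\sqrt m)\,e^{-c\rho r^2}/(1+\sqrt\rho\,r)$, and since there $\rho r^2\le m$ and $\sqrt\rho\,r\le\sqrt m$, this is $\le C\sqrt m$, which tends to $0$ as $m\to0^+$. Meanwhile $\K_0(r)$ does \emph{not} tend to $0$ there: for $u=\sqrt{\rho/m}\,r\le1$, the lower bound of Lemma~\ref{lem:cosh-calc}(1) gives $\int_u^\infty\cosh^{N-1}(t)\,dt\ge\int_2^\infty e^{-mt}\,dt=\tfrac1m e^{-2m}$, while Lemma~\ref{lem:cosh-calc}(2) gives $\int_0^\infty\cosh^{N-1}(t)\,dt\le\sqrt{\pi/2}\,\tfrac{\cosh2}{\sqrt m}+\tfrac{\cosh^22}{m}$; dividing yields $\K_0(r)\ge\tfrac{e^{-2m}}{2\cosh2\sqrt{\pi m/2}+2\cosh^22}\to\tfrac1{2\cosh^22}>0$ as $m\to0^+$. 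So the cross-term $\tfrac1ae^{-2a}$ is genuinely not dominated by the claimed Gaussian-regime expression when $m$ is small, and no universal $C$ repairs this; the logic ``$\K_0\le1/2$ finishes it'' is the wrong direction. (The paper's own proof is a one-line appeal to Lemma~\ref{lem:cosh-calc} plus the Gaussian-tail estimate and does not address this point either; the Gaussian-regime display appears to require $1-N$ bounded away from $0$, or a $\min(\cdot,1/2)$ wrapper, to hold with numeric constants.) Your computations of the Gaussian piece, the exponential piece, and the absorption of the $t=2$ versus $u=1$ threshold discrepancy are otherwise carried out correctly.
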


Certainly, the exponential decay of $\K(r)$ when $r \rightarrow \infty$ is expected, since a result of Gromov and V. Milman \cite{GromovMilmanLevyFamilies} asserts that a Poincar\'e inequality always implies this type of tail-decay. However, the Gaussian-type decay for $r \in [0,\sqrt{\frac{1-N}{\rho}}]$ is somewhat surprising. 

\begin{proof}
The inequality $\K(r) \leq \K_0(r)$ follows since the isoperimetric minimizers of our model-space are nested half-lines. Indeed, 
denoting by $\I$ and $\I_0$ the isoperimetric profiles of $(M,g,\mu)$ and the model-space, respectively. Denoting the model density $C_{N,\rho}^{-1} \cosh^{N-1}(\sqrt{-\delta} t)$ by $f$ and setting $F(r) = \int_r^\infty f(t) dt$, we have by (\ref{eq:isop-conc}):
\[
\K^{-1}(v) \leq \int_v^{1/2} \frac{dv}{\I(v)} \leq \int_v^{1/2} \frac{dv}{\I_0(v)} = \int_v^{1/2} \frac{dv}{f \circ F^{-1}(v)} .
\]
Performing the change of variables $v = F(r)$, we see that $\K^{-1}(v) \leq \int_0^{F^{-1}(v)} dr = F^{-1}(v)$, and hence $\K(r) \leq F(r)$. In particular, we have $\K_0(r) = F(r)$, and the first inequality follows. The second inequality follows from the estimates of Lemma \ref{lem:cosh-calc} and the standard (rough) Gaussian tail estimate:
\[
\int_t^\infty \exp(-s^2/2) ds \leq \frac{C \exp(-t^2/2)}{1 + t}  \;\;\; \forall t > 0 . 
\]
\end{proof}

See Subsection \ref{subsec:beyond} for a further discussion of this two-level behaviour.

\section{Concluding Remarks} \label{sec:conclude}

\subsection{Beyond Poincar\'e under positive curvature} \label{subsec:beyond}

It would be interesting to devise a natural functional inequality which captures the two-level behaviour of positively curved spaces of dimension $N < 1$, described in the previous section. Certainly, such a functional inequality cannot imply concentration stronger than exponential, as witnessed by our model density $\cosh^{N-1}(\sqrt{-\delta} t)$, and so contrary to the case when $N \in [n,\infty]$, such spaces do not satisfy in general a log-Sobolev inequality (which implies by the Herbst argument sub-Gaussian concentration, see \cite{Ledoux-Book}). However, a mixture of Poincar\'e for large-deviations and log-Sobolev for small ones is quite possible, and as we saw in the previous section, quite natural.

Under the Entropic Curvature-Dimension condition $\CD^e(\rho,N)$, which was shown by Erbar--Kuwada--Sturm \cite{EKS-Equivalence} to be equivalent when $N > 0$ to the usual $\CD(\rho,N)$ condition for essentially non-branching spaces (such as weighted-manifolds, Finsler and Alexandrov spaces), Ohta obtained in \cite{Ohta-NegativeN} variants for $N < 0$ and $\rho > 0$ of the HWI, Talagrand and log-Sobolev inequalities. However, when $N<0$, it is not clear whether the $\CD^e(\rho,N)$ condition is equivalent to the $\CD(\rho,N)$ one (Ohta showed that the former only implies the latter).

\subsection{Additional Properties of $\CD(\rho,N)$ spaces with $N < 0$}

Various additional properties of $\CD(\rho,N)$ weighted-manifolds with $N<0$ have been obtained by Ohta in \cite{Ohta-NegativeN} and Klartag in \cite{KlartagLocalizationOnManifolds}. We do not present a full account here, but only mention two results: a Brunn--Minkowski inequality verified by Ohta for all $N < 0$, extending previous results of Sturm \cite{SturmCD12} and Lott--Villani \cite{LottVillaniGeneralizedRicci} for the case $N \in [n,\infty]$, which are a particular case of a very general Brunn--Minkowski inequality of Cordero-Erausquin--McCann--Schmuckenschl{\"a}ger \cite{CMSInventiones, CMSManifoldWithDensity} involving distortion coefficients; and as already described in Remark \ref{rem:Poincare}, a reduction by Klartag of the Poincar\'e inequality to the one-dimensional case when $N < 1$, extending a previous result by Bakry and Qian \cite{BakryQianGenRicComparisonThms} for the case $N \in [n,\infty]$.

\subsection{Alternative Derivation} \label{subsec:Klartag}

Concurrently to our work, Bo'az Klartag has devised in \cite{KlartagLocalizationOnManifolds} a remarkable alternative method for reducing isoperimetric and functional inequalities to the one-dimensional case, by extending the localization method of Payne--Weinberger, Gromov--V. Milman and Kannan--Lov\'asz--Simonovits, from a linear setting to an arbitrary Riemannian one (see \cite{KlartagLocalizationOnManifolds} and the references therein). In particular, under a $\CD(\rho,N)$ condition, Klartag reduces the isoperimetric problem to the study of one-dimensional densities satisfying $\CD(\rho,N)$, namely the second-order differential inequality (\ref{eq:1d-ODE}).
One advantage of Klartag's method is that he does not need to rely on the deep regularity results provided by Geometric Measure Theory, on which our entire approach is based. Another advantage is that his method easily adapts to the study of functional inequalities as well, and in general seems more flexible. On the other hand, the fact that we can directly work with an isoperimetric minimizer, whose regular part of the boundary already has constant (generalized) mean-curvature, allows us to directly reduce our sought-after isoperimetric inequality to that on our one-dimensional model densities, characterized by the equality case in (\ref{eq:1d-ODE}), thus avoiding any further analysis of the one-dimensional case. 

\subsection{The case $N \in [1,n)$} \label{subsec:no-extend}

It is easy to see that Theorem \ref{thm:CDD-II}, asserting an isoperimetric inequality on $(M^n,g,\mu)$ satisfying the $\CDD(\rho,N,D)$ condition, cannot be further extended (at least, as is) to the range $N \in [1,n)$, and so our extension in this work to the entire range $(-\infty,1) \cup [n,\infty]$ is best possible. To see this, note the $\CD(\rho,N)$ definition is monotone in $\frac{1}{N-n}$, so that:
\begin{equation} \label{eq:paradox}
\frac{1}{N_1-n} \geq \frac{1}{N_2 - n} \;\; \Rightarrow \;\;  \CD(\rho,N_1) \text{ implies } \CD(\rho,N_2) . 
\end{equation}
Given $N \in [1,n)$, consider the construction from \cite[Subsection 3.2]{KolesnikovEMilmanReillyPart1}, which emulates the model measure on $\Real$ for the $\CD(1,1-\eps)$ condition, namely $\cosh(t / \sqrt{\eps})^{-\eps} dt$, on an $n$-dimensional weighted manifold $(M^n,g,\mu)$. By (\ref{eq:paradox}), this weighted manifold also satisfies the $\CD(1,N)$ condition, despite having arbitrarily bad isoperimetric and concentration properties, as witnessed by letting $\eps \rightarrow 0$ in the estimates of Proposition \ref{prop:two-level-conc}. This is in sharp contrast to the case when $N \in [n,\infty]$, wherein weighted manifolds satisfying $\CD(1,N)$ have isoperimetric and concentration properties which are at least as good as the scaled $N$-sphere $\sqrt{N-1} S^N$ ($N \in [n,\infty)$) or Gaussian measure ($N = \infty$),  by the theorems of Gromov--L\'evy, Bayle and Bakry--Ledoux mentioned in the Introduction; in particular, they all satisfy a sub-Gaussian concentration ($\K(r) \leq \exp(-c r^2)$).

\subsection{Relation of $\CD(\rho,N)$ to Bakry--\'Emery's original definition} \label{subsec:BE}

In \cite{BakryEmery}, Bakry and \'Emery originally defined the Curvature-Dimension condition in the context of abstract diffusion generators. Given an appropriate diffusion generator $L$, they defined the associated $\Gamma$ and $\Gamma_2$ operators, and defined the Curvature-Dimension condition, which we denote by $\text{BE}(\rho,N)$, as the property that:
\begin{equation} \label{eq:BE}
\Gamma_2(f) \geq \rho \Gamma(f) + \frac{1}{N} (Lf)^2 ,
\end{equation}
for all test functions $f$. When $L = \Delta_g - \scalar{\nabla_g , \nabla_g V}$ on a Riemannian manifold $(M,g,\mu = \exp(-V) \vol_g)$, where $\Delta_g$ denotes the Laplace-Beltrami operator, this condition translates to:
\[
\Ric_{g,\mu}(\nabla f, \nabla f) + \norm{\text{Hess}_g f}^2 \geq \rho \abs{\nabla f}^2 + \frac{1}{N}(L f)^2 . 
\]
It was shown by Bakry \cite[Section 6]{BakryStFlour} for $N \in [n,\infty]$, and extended to all $N \in (-\infty,0) \cup [n,\infty]$ in \cite[Remark 2.4]{KolesnikovEMilmanReillyPart1}, that in this case, $\text{BE}(\rho,N)$ is equivalent to $\CD(\rho,N)$ for all $\rho \in \Real$. However, we note that the elementary argument used to deduce this equivalence, based on the Cauchy--Schwarz inequality, no longer applies for $N \in [0,n)$, and so it seems that the two definitions diverge in that range. Of particular interest is the range $N \in (0,1)$, which we have developed in this work for the $\CD(\rho,N)$ definition. The difference between the two definitions is especially evident from the fact (\ref{eq:paradox}) that the $\CD(\rho,N)$ condition is monotone in $\frac{1}{N-n}$, whereas the $\text{BE}(\rho,N)$ condition (\ref{eq:BE}) is clearly monotone in $\frac{1}{N}$.

\subsection{Future Work}

In general, the $\CD(\rho,N)$ condition will not yield any meaningful information on $(M^n,g,\mu)$ when $N \in [1, n)$. However, 
in a subsequent work \cite{EMilman-GradedCD}, we devise a more restrictive condition we dub the Graded Curvature-Dimension condition, which in some cases allows handling the latter regime. Even in the classical regime $N \in [n,\infty)$, we may use this condition to sharpen our previous isoperimetric results from \cite{EMilmanSharpIsopInqsForCDD} \text{in the Euclidean setting} (it was shown in \cite{EMilmanSharpIsopInqsForCDD} that our isoperimetric inequalities are sharp in the Riemannian setting, but when $\rho \neq 0$, we show in \cite{EMilman-GradedCD} that they are no longer sharp in the Euclidean one).

Another direction which is worth looking into, is investigating whether our results from \cite{EMilmanGeometricApproachPartI}, asserting the equivalence of concentration and isoperimetric inequalities on weighted manifolds satisfying the $\CD(\rho,N)$ condition with $\rho \leq 0$ and $N = \infty$, may be extended to the range $N \in (-\infty,1)$.

\setlinespacing{1.0}
\setlength{\bibspacing}{0pt}

\bibliographystyle{plain}
\bibliography{../../../ConvexBib}

\end{document}